\title[Liouville theorems]{Liouville theorems for semilinear differential inequalities on sub-Riemannian manifolds}
\author{Bing Wang}
\address{Bing Wang: Department of Mathematics, Sun Yat-sen University,  No. 135, Xingang Xi Road, Guangzhou, 510275 \\}
\email{wangb265@mail2.sysu.edu.cn}
\author{Hui-Chun Zhang}
\address{Hui-Chun Zhang: Department of Mathematics, Sun Yat-sen University, No. 135, Xingang Xi Road, Guangzhou, 510275 \\}
\email{zhanghc3@mail.sysu.edu.cn}
\newtheorem{theorem}{Theorem}[section]
\newtheorem{proposition}[theorem]{Proposition}
\newtheorem{lemma}[theorem]{Lemma}
\newtheorem{thm}{Theorem}
\newtheorem{hypothesis}{Hypothesis }
\theoremstyle{definition}
\theoremstyle{remark}
\newtheorem{definition}[theorem]{Definition}
\newtheorem{remark}[theorem]{Remark}
\numberwithin{equation}{section}
\newcommand{\ls}{\leqslant}
\newcommand{\gs}{\geqslant}
\newcommand{\dx}{\ {\rm d}x}
\newcommand{\dt}{\ {\rm d}t}
\newcommand{\ds}{\ {\rm d}s}
\newcommand{\drho}{\ {\rm d}\rho}
\newcommand{\dmu}{\ {\rm d}\mu}
\newcommand{\ip}[2]{\langle{#1},{#2}\rangle}
\newcommand{\R}{\mathbb{R}}
\newcommand{\G}{\mathbb{G}}
\newcommand{\cH}{\mathcal{H}}
\newcommand{\CD}{CD$(\rho_1, \rho_2, \kappa, d)$}
\begin{document}

\begin{abstract}
    In this paper, we generalize Liouville type theorems for some semilinear partial differential inequalities to 
    sub-Riemannian manifolds satisfying a nonnegative generalized curvature-dimension inequality introduced by 
    Baudoin and Garofalo in \cite{BG2017}.
    In particular, our results apply to all Sasakian manifolds with nonnegative horizontal Webster-Tanaka-Ricci curvature.
    The key ingredient is to construct a class of ``good" cut-off functions. 
    We also provide some upper bounds for lifespan to parabolic and hyperbolic inequalities.
\end{abstract}

\maketitle

\section{Introduction}
In the past two decades, the geometric analysis  aspects of non-Riemannian manifolds have many achievements. For instances, 
    the Liouville theorem for harmonic (or super-harmonic) functions have been discovered for Alexandrov spaces 
    \cite{ZZ2012,HX2014}, graphs \cite{HJL2015}, metric measure spaces \cite{CJKS2020, JKY2014, AGS2017} and sub-Riemannian manifolds 
    \cite{BG2017, BBG2014, CKLT2019}. The purpose of the present paper is to study the Liouville type theorems for solutions of some semilinear partial 
    differential inequalities  on sub-Riemannian manifolds, 
    including the subelliptic, parabolic and hyperbolic cases.
     
    Recalling that in the Euclidean space $\R^n$, probably the most outstanding result of this type is attached to the equation
    \begin{equation}\label{eq1.1}
        \Delta u+u^p=0 \ \ {\rm in} \ \ \R^n,\quad n>2,
    \end{equation}
    where $p>0$, and was obtained in \cite{Gidas}. It was shown there that (\ref{eq1.1}) admits no nontrivial non-negative solution 
    if and only if $1\ls p < \frac{n+2}{n-2}$. 
    Moreover, there exists no nontrivial nonnegative super-solution of (\ref{eq1.1}) provided that
    $$1\ls p\ls\frac{n}{n-2}.$$
    Fujita \cite{Fujita}  and Kato \cite{Kato} established the similar Liouville type theorems for the parabolic equation and the hyperbolic 
    equation on $\R^n$. They shown that (\cite{Fujita} for the parabolic case)
    $$\partial_t u-\Delta u\gs u^p  \ \ {\rm in} \ \ \R^+ \times \R^n $$
    with the initial data $u(0,x)=u_0(x)\in L^1(\R^n)$ admits no nontrivial nonnegative solution if 
    $$1<p\ls p_{\rm Fuj}:=1+\frac{2}{n} \quad {\rm and}\quad \liminf_{R\to\infty}\int_{B_R(0)}u_0 \dx \gs 0.$$
    and that (\cite{Kato} for the hyperbolic case)   
    $$\left\{
        \begin{aligned}
            &\partial_{tt}u-\Delta u\gs |u|^p, \quad \ {\rm in} \ \R^+ \times \R^n,    \\
            &u|_{t=0}=u_0,   \\  
            &\partial_t u|_{t=0}=u_1,
        \end{aligned}
    \right.$$
    admits no nontrivial global solution if 
    $$1<  p_\mathrm{Kato}:=\frac{n+1}{n-1}\quad {\rm and}\quad \liminf_{R\to\infty}\int_{B_R(0)}u_1 \dx \gs 0.$$

    Such Liouville type theorems have been widely generalized to more general elliptic operators on Euclidean spaces or Riemannian manifolds with vary different assumptions, we refer to \cite{SZ02,AS2014, GSV2020, GSXX2020, Z2015} (for the elliptic case),  \cite{MMP2017,Zqs1999} (for the parabolic case), \cite{MPS2020} (for the hyperbolic case) and their references. 

    On the aspect of non-Riemannian settings, such Liouville type theorems was first considered on Heisenberg groups. 
    Garofalo and Lanconelli \cite{GL1992} and later Birindelli, Capuzzo Dolcetta and Cutri \cite{BCC1997} obtained some 
    nonexistence results for positive solutions of nonlinear elliptic inequalities of the type
    $$\Delta_{\mathbb{H}^n}u+ |\eta|^\gamma_{\mathbb{H}^n}u^p\ls 0,\qquad \gamma>-2, \ \ p>0.$$
    S. Pohozaev and L. V\'eron \cite{PV2000} provide a systematic study to nonexistence results for 
    solutions of partial differential
    inequalities with nonlinear term
    $|\eta|^\gamma_{\mathbb{H}^n}|u|_{\mathbb{H}^n}^p$ on Heisenberg group $\mathbb{H}^n$. In particular, they proved the following results.
    \begin{thm}[Pohozaev-V\'eron \cite{PV2000}]
        Let $\mathbb{H}^n$ be the $(2n+1)$-dimensional Heisenberg group, and let $\Delta_{\mathbb{H}^n}$ be the canonical sub-Laplacian. 
        $Q:=2n+2$ is the homogeneous dimension of $\mathbb{H}^n$. 
        Then 
        \begin{itemize}
            \item[(1)] Assume that $1<p\ls Q/(Q-2)$, then there exists no nontrivial solution to 
                $$\Delta_{\mathbb{H}^n}u+|u|^p\ls 0\quad {\rm in}\  \ \mathbb{H}^n;$$ 
            \item[(2)]Assume that 
                $$1<p\ls(Q+2)/Q \ \  and \ \ \int_{\mathbb{H}^n}u_0(\eta)\mathrm{d}\eta \gs 0$$
                Then there exists no global in time nontrivial solution of
                \begin{equation*}
                    \left\{
                    \begin{aligned}
                    &\partial_t u-\Delta_{\mathbb{H}^n}u \gs |u|^p\quad {\rm in}\ \ \R^+\times\mathbb{H}^n \\
                    &u|_{t=0}=u_0\in L^1_{\rm loc}(\mathbb{H}^n); 
                    \end{aligned}
                    \right.
                \end{equation*}
            \item[(3)]Assume that
                $$1<p\ls(Q+1)/(Q-1) \ \  and \ \ \int_{\mathbb{H}^n}u_1(\eta)\mathrm{d}\eta \gs 0$$
                Then there exists no global in time nontrivial solution of
                \begin{equation*}
                    \left\{
                    \begin{aligned}
                    &\partial_{tt} u-\Delta_{\mathbb{H}^n}u \gs |u|^p,   \\
                    &u|_{t=0}=u_0, \\
                    &\partial_t u|_{t=0}=u_1,   
                    \end{aligned}
                    \right.
                \end{equation*}
                in $\R^+\times \mathbb{H}^n$, where $u_0,u_1\in L^1_{\rm loc}(\mathbb{H}^n).$
        \end{itemize}
    \end{thm}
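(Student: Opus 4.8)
The plan is to prove all three parts by a single test-function (nonlinear-capacity) argument in the spirit of Mitidieri--Pohozaev, exploiting the homogeneous dilation structure of $\mathbb{H}^n$ rather than any Riemannian comparison. The starting point is a family of cut-off functions $\varphi_R$ adapted to the Carnot--Carath\'eodory balls: $\varphi_R\equiv 1$ on $B_R$, with support in $B_{2R}$, and—using the intrinsic dilations of $\mathbb{H}^n$—satisfying the horizontal estimates $|\nabla_{\mathbb{H}^n}\varphi_R|\ls C/R$ and $|\Delta_{\mathbb{H}^n}\varphi_R|\ls C/R^2$. Since the homogeneous dimension is $Q$, the balls obey $|B_R|\simeq R^Q$, and this volume growth is exactly what fixes the three critical exponents.

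For part (1) I would multiply $\Delta_{\mathbb{H}^n}u+|u|^p\ls 0$ by $\varphi_R^m$ with $m$ large, integrate by parts twice to get $\int|u|^p\varphi_R^m\ls\int|u|\,|\Delta_{\mathbb{H}^n}(\varphi_R^m)|$, bound $|\Delta_{\mathbb{H}^n}(\varphi_R^m)|\ls CR^{-2}\varphi_R^{m-2}$ on the annulus $B_{2R}\setminus B_R$, and apply Young's inequality with exponents $p$ and $p'=p/(p-1)$. This absorbs the left-hand side and leaves a remainder controlled by $R^{-2p'}|B_{2R}\setminus B_R|\ls C R^{Q-2p/(p-1)}$; in the subcritical range $p<Q/(Q-2)$ this power is strictly negative, so letting $R\to\infty$ forces $\int|u|^p=0$.

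For the evolution inequalities I would use space-time cut-offs $\psi_R(t,\eta)=\phi(t/T)\varphi_R(\eta)$, integrate by parts in both variables so that all derivatives fall on $\psi_R$, and then invoke the sign hypotheses on the initial data to discard the resulting time-boundary terms. In the parabolic case the surviving boundary term is $\int_{\mathbb{H}^n}u_0\,\psi_R(0,\cdot)$, nonnegative by $\int_{\mathbb{H}^n}u_0\gs 0$ and hence dropped; the parabolic scaling $T\simeq R^2$ produces a remainder of size $R^{Q+2-2p/(p-1)}$, which vanishes precisely when $p<(Q+2)/Q$. In the hyperbolic case one integrates by parts twice in $t$, arranging $\partial_t\psi_R(0,\cdot)=0$ so the $u_0$-term disappears and the term $-\int_{\mathbb{H}^n}u_1\,\psi_R(0,\cdot)$ is $\ls 0$ by $\int_{\mathbb{H}^n}u_1\gs 0$; the hyperbolic scaling $T\simeq R$ gives the remainder $R^{Q+1-2p/(p-1)}$, negative exactly for $p<(Q+1)/(Q-1)$.

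The main obstacle is the critical case, where each exponent of $R$ is exactly $0$ and the crude bound only yields $\int|u|^p\ls C$, i.e. $u\in L^p$. To close the argument I would re-run the estimate but replace the crude annular bound by $\int_{B_{2R}\setminus B_R}|u|^p$ (via H\"older on the annulus), which tends to $0$ as $R\to\infty$ by absolute continuity of the integral once $u\in L^p$ is known; this upgrades the uniform bound to $\int|u|^p=0$. A secondary technical point is to justify every integration by parts for merely weak solutions and to verify that the horizontal derivative estimates for $\varphi_R$ genuinely follow from the group dilations and the gauge distance, since the sub-Laplacian is only subelliptic.
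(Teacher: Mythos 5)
Your proposal is correct and follows essentially the same route as the paper: the Mitidieri--Pohozaev test-function method with cut-offs $\varphi_R^\alpha$ whose derivatives live on the annulus $B_{2R}\setminus B_R$, H\"older/Young against the volume growth $|B_R|\simeq R^Q$, parabolic scaling $T\simeq R^2$ and hyperbolic scaling $T\simeq R$, and the absolute-continuity (monotone convergence) argument on the annulus to settle the critical exponents --- which is exactly how the paper proves its generalizations in Section 4. The only difference is cosmetic: on $\mathbb H^n$ the cut-offs come directly from the group dilations, whereas the paper's main effort is constructing such cut-offs on general sub-Riemannian manifolds.
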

    For more related works on this type of nonexistence theorems on Heisenberg groups, we refer to \cite{BarGo2020,Xu2009,Yu2013}, 
    for H-type groups \cite{BU2004}, for  
    Carnot groups \cite{AM2013,KTT2020}, and for metric measure spaces \cite{BJ2020}. 

    In this present work, we provide a systematic study of this type of Liouville theorems on more general sub-Riemannian manifolds. 
    To state our main results we now recall the relevant framework.  Let $M$ be a $C^\infty$ sub-Riemannian manifold with sub-Riemannian structure $(M,\mathcal H,g)$ endowed with
    a $C^\infty$ measure $\mu$, where $\cH$ is a bracket-generating 
        distribution  by  the vector fields $\{X_i\}_{1\ls i\ls d}$.  Let $L$ be a $C^\infty$ second-order diffusion, locally subelliptic operator 
    on $M$ with real coefficients.   Locally in the neighborhood of each $x\in M$ the operator $L$ can be written as 
        $$L=-\sum_{i=1}^d X_i^* X_i,$$
        where  $X_i^*$ are the formal adjoint of $X_i$ with respect to $\mu$.  Suppose that  
    \begin{equation}
        L1=0;\quad  \int_M fLg{\rm d}\mu=\int_MgLf{\rm d}\mu;\quad\int_MfLf{\rm d}\mu\ls0
    \end{equation} 
    for any $ f, g \in C^\infty_0(M)$.  
    The $carr\acute{e}\ du\ champ$ 
    $\Gamma(f)$ associated to $L$ is defined by
    $$\Gamma(f)=\Gamma(f,f), \quad \Gamma(f,g)=\frac{1}{2}(L(fg)-fLg-gLf), \quad f,g\in C^\infty(M).$$
    The following distance is canonically associated with the operator $L$, which is given by:
    \begin{equation}
        d_L(x, y) = \sup_{}
        \big\{| f (x)- f (y)| \ |\ f \in C^\infty(M), \
        \|\Gamma(f)\|_{\infty}\ls 1\big\}.  
    \end{equation}
    Throughout this paper the metric space $(M,d_L)$ is assumed to be complete.
  
    As in \cite{BG2017}, we also assume  that $M$ is equipped with another first-order differential symmetric bilinear form 
    $\Gamma^Z: C^\infty(M)\times C^\infty(M)\to \R$ such that $\Gamma^Z(f,g)=\Gamma^Z(g,f)$ and $\Gamma^Z(fg,h)=f\Gamma^Z(g,h)+g\Gamma^Z(f,h)$. 
    Analogous to $\Gamma$, we assume that $\Gamma^Z(f)\gs 0$, where $\Gamma^Z(f)=\Gamma^Z(f,f)$, and that    
    $$\Gamma^Z(\rho\circ\varphi)\ls\left(\rho'(\varphi)\right)^2\Gamma^Z(\varphi),\quad \forall\  \rho\in C^\infty(M),\ \varphi\in C_0^\infty(M).$$ 
    Baudoin and Garofalo \cite{BG2017}
    introduced a generalized curvature-dimension inequality for sub-Riemannian manifolds as follows:

    \begin{definition}[Baudoin-Galofalo \cite{BG2017}]\label{defn1}
        $M$ is said to be satisfying the \emph{generalized curvature-dimension inequality} \CD \ with respect to $L$ and $\Gamma^Z$
        if there exists constants $\rho_1\in\R$, $\rho_2\gs 0$, $\kappa\gs 0$ and $0<d\ls\infty$ such that
        $$\Gamma_2(f)+\nu\Gamma_2^Z(f)\gs\frac{1}{d}(Lf)^2+\left(\rho_1-\frac{\kappa}{\nu}\right)\Gamma(f)+\rho_2\Gamma^Z(f)$$
        for every $f\in C^\infty(M)$ and every $\nu>0$, where
        $$\Gamma_2(f,g)=\frac 1 2\big(L\Gamma(f,g)-\Gamma(f,Lg)-\Gamma(g,Lf)\big),\ \ \ $$
        $$\Gamma^Z_2(f,g)=\frac 1 2\big(L\Gamma^Z(f,g)-\Gamma^Z(f,Lg)-\Gamma^Z(g,Lf)\big), $$
        $\Gamma_2(f)=\Gamma_2(f,f)$ and  $\Gamma^Z_2(f)=\Gamma^Z_2(f,f)$.
    \end{definition}
% We remark that,  changing $\Gamma^Z$ to $a\Gamma^Z$ for some $a>0$, $M$ satisfies $CD(\rho_1,a\rho_2,a\kappa, d)$ with respect to $L$ and $a\Gamma^Z$. 
    
When M is a complete Riemannian manifold,
    \CD \ is a generalization of the Bakry-Emery curvature-dimention condition $ \mathrm{CD}(\rho_1,d)$ with respect to $L=\Delta$
    the Belmetri-Laplace operator and $\Gamma^Z(f)=0$ for all $f\in C^\infty_0(M)$. 
    In general, the choice of $\Gamma^Z$ is not a priori canonical. However, in the main geometrical examples given in \cite{BG2017}, 
    the choice of $\Gamma^Z$ is canonical. For example, for CR Sasakian manifolds $M$, one can consider the gradient of a function 
    $f\in C^\infty(M)$ to have the horizontal part $\nabla_Hf$ and the vertical part $\nabla_Vf$. 
    In this case, we choose $\Gamma^Z(f)=|\nabla_Vf|^2.$
    
    As in \cite{BG2017}, we make the following four basic assumptions:

    \begin{hypothesis} \label{a1}
        There exists an increasing sequence $h_k\in C_0^\infty(M)$ such that $h_k\nearrow1$ on $M$, and
        $$\|\Gamma(h_k)\|_\infty+\|\Gamma^Z(h_k)\|_\infty\to 0 \quad as\ k\to 0.$$  
    \end{hypothesis}

    \begin{hypothesis} \label{a2} 
        For any $f\in C^\infty(M)$, one has
        $$\Gamma(f,\Gamma^Z(f))=\Gamma^Z(f,\Gamma(f)).$$
    \end{hypothesis}

    \begin{hypothesis} \label{a3}
        Given any two points $x, y \in M$, there exists a subunit curve joining them (see \cite{BG2017} or Sect. 2 for the definition of subunit curve). 
        It is for instance fulfilled when the operator $ L$ satisfies the
        finite rank condition of the Chow-Rashevsky theorem (see \cite{BG2017}).
    \end{hypothesis}

    \begin{hypothesis} \label{a4}
        The heat semigroup generated by $L$ is stochastically complete, i.e. for $t\gs 0$, $P_t(1)=1$ and for every 
        $f\in C_0^\infty(M)$, $T\gs 0$, 
        $$\sup_{t\in[0,T]}\|\Gamma(P_t(f))\|_\infty+\|\Gamma^Z(P_t(f))\|_\infty<\infty.$$  
    \end{hypothesis}

    % To deal with some explicit estimates in this paper, we make a quantitative version of Hypothesis (H1) that
    % \begin{hypothesis}[\bf H$1'$] \label{a1'}
    %     Given any $x_0\in M$ and $R>0$, there exists a Lipschitz continuous cut-off function $\psi: M\to [0,1]$ such that  $\psi\equiv1$ on $B_R(x_0)$, ${\rm supp}(\psi)\subset B_{\gamma R}(x_0)$ and that 
    %     $$        \|\Gamma(\psi)\|_\infty+\|\Gamma^Z(\psi)\|_\infty\ls \frac{C}{R^2},$$
    %     for some constant $C$ which is independent of $x_0$ and $R$.  
    % \end{hypothesis}

   % In the following we will fix a reference point $x_0$ on $M$ and denote the metric ball centered in $x_0$ with radius $R$ by $B_R$.
    The first purpose of this paper is to extend the the Liouville theorems in \cite{PV2000} from Heisenberg groups to more general 
    sub-Riemannian manifolds. This is the content of the following result.
    \begin{theorem}\label{thm1.2}
        Let $(M,\mu)$ be a sub-Riemannian manifold, and let $L$ be a subelliptic diffusion operator. Suppose that there is a bilinear form 
        $\Gamma^Z$ satisfying the above Hypothesis {\rm (\ref{a1}-\ref{a4})} and that  $M$ satisfies the generalized curvature-dimension inequality 
        $CD(0,\rho_2,\kappa,d)$ \ with respect to $L$ and $\Gamma^Z$ for some $\rho_2>0,\kappa\gs 0$ and $0<d\ls \infty.$
        Set 
        \begin{equation}\label{equ1.4}
        D=\left(1+\frac{3\kappa}{2\rho_2}\right)d.
        \end{equation}
        Then the following results hold:
        \begin{itemize}
            \item[(1)]
                The subelliptic inequality
                $$ -Lu\gs |u|^p,$$ 
                admits no nontrivial solution provided that 
                $$1<p\ls\frac{D}{D-2}.$$
            \item[(2)]
                The parabolic inequality
                \begin{equation}  \label{intro1}
                    \left\{
                        \begin{aligned}
                            &\partial_t u(t,x)-Lu \gs |u|^p, x\in M,t>0,    \\
                            &u(0,x)= u_0(x),     
                        \end{aligned}
                    \right.
                \end{equation}
                with 
                $$\liminf_{R\to\infty}\int_{B_R(x_0)}u_0(x)\dmu(x)>0,$$
                admits no global in time solution provided that $1<p\ls 1+\frac{2}{D}$.
            \item[(3)]  
                The hyperbolic inequalities
                \begin{equation} \label{intro2}
                    \left\{
                        \begin{aligned}
                            &\partial_{tt} u-Lu \gs |u|^p, x\in M,t>0,    \\
                            &u(0,x)=u_0(x),   \\
                            &\partial_t u(0,x)=u_1(x),   
                        \end{aligned}
                    \right.
                \end{equation}
                with 
                $$\liminf_{R\to\infty}\int_{B_R(x_0)}u_1(x)\dmu(x)>0,$$
                admits no global in time solution provided that $1<p\ls\frac{D+1}{D-1}$.
        \end{itemize}
    \end{theorem}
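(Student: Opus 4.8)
The plan is to use the nonlinear capacity (test-function) method of Mitidieri--Pohozaev type: test each differential inequality against a carefully scaled nonnegative cut-off, apply H\"older's and Young's inequalities, and let the scale tend to infinity. The whole argument rests on two analytic inputs attached to the condition $CD(0,\rho_2,\kappa,d)$. First, a volume growth bound $\mu(B_R)\ls CR^{D}$ for all large $R$, where $B_R=B_R(x_0)$ is the $d_L$-ball about a fixed $x_0$ and $D$ is the homogeneous dimension (\ref{equ1.4}); this follows from the volume-doubling property of Baudoin and Garofalo \cite{BG2017}. Second, a family of ``good'' cut-offs $\psi_R\in C_0^\infty(M)$ with $0\ls\psi_R\ls 1$, $\psi_R\equiv 1$ on $B_R$, $\mathrm{supp}\,\psi_R\subset B_{2R}$, and the simultaneous bounds $\|\Gamma(\psi_R)\|_\infty\ls CR^{-2}$ and $\|L\psi_R\|_\infty\ls CR^{-2}$. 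Granting these, each exponent $D/(D-2)$, $1+2/D$, $(D+1)/(D-1)$ is exactly the Euclidean critical exponent with the dimension $n$ replaced by $D$.

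For (1), set $\varphi=\psi_R^{m}$ with $m>2p'$, $p'=p/(p-1)$. Testing $-Lu\gs|u|^p$ against $\varphi$ (interpreting the inequality weakly against the compactly supported $\varphi$) and using the symmetry $\int_M(Lu)\varphi\dmu=\int_M u\,L\varphi\dmu$ gives $\int_M|u|^p\varphi\dmu\ls\int_M|u|\,|L\varphi|\dmu$. Since $L\varphi$ is supported in the annulus $B_{2R}\setminus B_R$, the cut-off bounds yield $\varphi^{-p'/p}|L\varphi|^{p'}\ls CR^{-2p'}$ there, so H\"older's inequality produces $\int_M|u|^p\varphi\dmu\ls\big(\int_{B_{2R}\setminus B_R}|u|^p\varphi\dmu\big)^{1/p}\big(CR^{-2p'}\mu(B_{2R})\big)^{1/p'}$. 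With the volume bound this reads $\int_{B_R}|u|^p\dmu\ls CR^{D-2p'}$, and the hypothesis $p\ls D/(D-2)$ is precisely $D-2p'\ls 0$. When the exponent is negative, letting $R\to\infty$ forces $u\equiv 0$; in the critical case $D=2p'$ the estimate first gives $|u|^p\in L^1(M)$, after which the tail $\int_{B_{2R}\setminus B_R}|u|^p\dmu\to 0$ in the H\"older factor again forces $u\equiv 0$.

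Cases (2) and (3) run on the same template with space-time cut-offs $\varphi(t,x)=\theta(t/T)^{s}\psi_R(x)^{m}$ and parabolic, respectively hyperbolic, scaling $T=R^2$ and $T=R$. Testing the inequality on $(0,\infty)\times M$ and integrating by parts in $t$ moves the time derivative onto $\theta$ and extracts the initial term: for (2) one takes $\theta(0)=1$, producing $+\int_M u_0\psi_R^m\dmu$ on the left; for (3) one takes $\theta(0)=1$, $\theta'(0)=0$ so that the $u_0$-contribution drops and $+\int_M u_1\psi_R^m\dmu$ survives. Young's inequality absorbs an $\varepsilon\int\!\int|u|^p\varphi$ term, and the remaining integrals of $\varphi^{-p'/p}|L\varphi|^{p'}$ together with the corresponding time-derivative term ($\partial_t\varphi$ for the parabolic, $\partial_{tt}\varphi$ for the hyperbolic case) are estimated as above, now picking up a time factor $T$; with the chosen scalings both contributions are $\ls CR^{D-2p'+2}$ in the parabolic case and $\ls CR^{D+1-2p'}$ in the hyperbolic case. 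A direct computation shows $p\ls 1+2/D\Leftrightarrow D-2p'+2\ls 0$ and $p\ls(D+1)/(D-1)\Leftrightarrow D+1-2p'\ls 0$. Since $\liminf_{R\to\infty}\int_{B_R(x_0)}u_0\dmu>0$ (resp.\ for $u_1$) keeps the extracted initial term bounded below by a fixed positive constant, a nonpositive exponent makes the right-hand side stay bounded (critical) or tend to $0$ (subcritical), contradicting the positivity; the critical endpoint is again closed by the $|u|^p\in L^1$ refinement.

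The genuinely hard step is the construction of the good cut-offs $\psi_R$: I need $\Gamma(\psi_R)$ and $L\psi_R$ both of size $R^{-2}$, yet in the sub-Riemannian setting the distance function $d_L$ is only Lipschitz (so that $\Gamma(d_L)\ls 1$ is available) while $Ld_L$ is not controlled pointwise, so one cannot simply compose $d_L$ with a smooth profile. The way through is to use the inequality $CD(0,\rho_2,\kappa,d)$ itself---via the associated heat semigroup $P_t$ (Hypothesis \ref{a4}) together with the Li--Yau type and volume-doubling estimates of \cite{BG2017}---to regularize a Lipschitz cut-off of $d_L$ into a smooth one while keeping both bounds of order $R^{-2}$. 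This cut-off lemma, together with the doubling bound $\mu(B_R)\ls CR^{D}$, is the key ingredient advertised in the abstract; once it is in place, the three Liouville statements follow from the scaling bookkeeping above.
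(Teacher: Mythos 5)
Your proposal follows essentially the same route as the paper: the Pohozaev-type test-function method with exponent bookkeeping driven by two inputs, the volume bound $\mu(B_R)\leq CR^{D}$ (the paper quotes it from Baudoin--Bonnefont--Garofalo) and a family of cut-offs with $\Gamma(\psi_R)$ and $L\psi_R$ both of order $R^{-2}$ built by heat-semigroup regularization under $CD(0,\rho_2,\kappa,d)$; your scaling computations and the treatment of the critical endpoints via $|u|^p\in L^1$ plus the vanishing annular tail match the paper's Theorems on the subelliptic, parabolic and hyperbolic cases. The one place where your sketch of the ``hard step'' would need repair is the seed function: the commutation estimate coming from $CD(0,\rho_2,\kappa,d)$ controls $(LP_t f)^2$ only in terms of $\|\Gamma(f)+\nu\Gamma^Z(f)\|_\infty$ with $\nu\sim R^2$, so a Lipschitz cut-off of $d_L$ alone is not enough --- you also need $\Gamma^Z$ of the seed to be of order $R^{-4}$. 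The paper obtains such a seed by cutting off the penalized distance $d_\tau$ (induced by $\Gamma+\tau^2\Gamma^Z$) with $\tau=R/2$ and then invoking the comparison $d_L\leq C(\tau+d_\tau)$ to ensure the support is still contained in a ball $B_{\gamma R}$ for $d_L$; with that adjustment your construction goes through exactly as in the paper.
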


    The CR Sasakian manifolds are of the main examples for sub-Riemannian manifolds which satisfy  the generalized curvature-dimension conditions in Definition \ref{defn1}. The following Liouville theorems  are of independent interests.

    \begin{theorem}\label{thm1.3}
        Let $(M,\theta)$ be a complete Sasakian manifold with real dimension $2n+1$. If the Tanaka-Webster Ricci curvature on $M$ is nonnegative, by letting  $D_n=2n+3$,
        then the following Liouville properties hold:
        \begin{itemize}
            \item[(1)]
                The subelliptic inequality
                $$ -Lu\gs |u|^p,$$ 
                admits no nontrivial solution provided that 
                $$1<p\ls\frac{D_n}{D_n-2}.$$
            \item[(2)]
                The parabolic inequality (\ref{intro1})
                with 
                $$\liminf_{R\to\infty}\int_{B_R(x_0)}u_0(x)\dmu(x)>0,$$
                admits no global in time solution provided that $1<p\ls 1+\frac{2}{D_n}$.
            \item[(3)]  
                The hyperbolic inequalities (\ref{intro2})
                with 
                $$\liminf_{R\to\infty}\int_{B_R(x_0)}u_1(x)\dmu(x)>0,$$
                admits no global in time solution provided that $1<p\ls\frac{D_n+1}{D_n-1}$.
        \end{itemize}

    \end{theorem}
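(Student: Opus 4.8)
The plan is to deduce Theorem \ref{thm1.3} from Theorem \ref{thm1.2} by showing that a complete Sasakian manifold $(M,\theta)$ of real dimension $2n+1$ with nonnegative Tanaka-Webster Ricci curvature fits into the abstract framework with the right parameters. Concretely, I would take $L$ to be the CR sub-Laplacian associated to $\theta$, let $Z$ be the Reeb (characteristic) vector field, and set $\Gamma^Z(f)=|\nabla_V f|^2=(Zf)^2$, so that $\Gamma(f)=|\nabla_H f|^2$ is the squared horizontal gradient. The whole theorem then reduces to two tasks: (i) verifying Hypotheses \ref{a1}--\ref{a4} for this triple $(L,\Gamma,\Gamma^Z)$, and (ii) identifying the constants $\rho_2,\kappa,d$ in the generalized curvature-dimension inequality and checking that the resulting effective dimension $D$ from \eqref{equ1.4} equals $D_n=2n+3$.

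For step (ii), the key input is the sub-Riemannian Bochner-type identity on Sasakian manifolds computed by Baudoin and Garofalo in \cite{BG2017}. Using the transverse symmetry of the Reeb flow (so that $Z$ commutes with the horizontal fields up to the structure), their computation shows that nonnegativity of the Tanaka-Webster Ricci tensor is precisely the condition $\rho_1=0$, and yields the inequality $CD(0,n,1,2n)$, i.e. $d=2n$ (the horizontal rank), $\rho_2=n$, and $\kappa=1$. Substituting into \eqref{equ1.4} gives
$$
D=\Big(1+\frac{3\kappa}{2\rho_2}\Big)d=\Big(1+\frac{3}{2n}\Big)\cdot 2n=2n+3=D_n,
$$
after which the three assertions (subelliptic, parabolic, hyperbolic) follow verbatim from the corresponding parts of Theorem \ref{thm1.2}.

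For step (i), I would verify the four hypotheses as follows. Hypothesis \ref{a3} (existence of subunit curves) is immediate, since the horizontal distribution of a Sasakian manifold is bracket-generating and the Chow-Rashevsky theorem applies. Hypothesis \ref{a2}, the commutation $\Gamma(f,\Gamma^Z(f))=\Gamma^Z(f,\Gamma(f))$, is the analytic manifestation of the transverse symmetry: because $Z$ generates automorphisms of the horizontal structure, the mixed derivatives of $\Gamma(f)=|\nabla_H f|^2$ along $Z$ and of $\Gamma^Z(f)=(Zf)^2$ along horizontal directions match. The exhaustion in Hypothesis \ref{a1} is built from completeness of $(M,d_L)$ by taking $h_k=\chi\big(d_L(\cdot,x_0)/k\big)$ for a fixed Lipschitz cutoff profile $\chi$ and estimating $\Gamma(h_k)$ and $\Gamma^Z(h_k)$ by $\|\chi'\|_\infty/k\to 0$. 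Finally, Hypothesis \ref{a4} (stochastic completeness with uniform heat-flow gradient bounds) follows from the $CD(0,n,1,2n)$ inequality itself via the semigroup estimates of \cite{BG2017}, together with the fact that nonnegative curvature forces at most polynomial volume growth, which by a Grigor'yan-type criterion guarantees stochastic completeness.

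I expect the genuine difficulty to lie in step (ii): pinning down the precise constants $(\rho_2,\kappa,d)=(n,1,2n)$ requires carefully tracking the curvature and torsion contributions in the Bochner formula for the Tanaka-Webster connection, and it is this bookkeeping, rather than the application of Theorem \ref{thm1.2} (which is then purely formal), that produces the sharp value $D_n=2n+3$. The verification of Hypothesis \ref{a2} through transverse symmetry is the second delicate point, since it is exactly the structural feature distinguishing Sasakian manifolds from general sub-Riemannian manifolds and makes the Baudoin-Garofalo machinery applicable here.
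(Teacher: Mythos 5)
Your overall strategy --- feed the Sasakian case into the abstract machinery --- is right in spirit, but the reduction to Theorem \ref{thm1.2} does not yield the stated constant $D_n=2n+3$, and the arithmetic in your step (ii) only closes because of an incorrect value of $\rho_2$. According to Theorem 1.7 of \cite{BG2017} (quoted verbatim as Theorem 2.1 in this paper), a complete Sasakian manifold with Tanaka--Webster Ricci bounded below by $\rho_1$ satisfies $\mathrm{CD}(\rho_1,n/2,1,2n)$, i.e.\ $\rho_2=n/2$, not $\rho_2=n$. With the correct constants $(\rho_2,\kappa,d)=(n/2,1,2n)$, formula (\ref{equ1.4}) gives
$$D=\Big(1+\frac{3}{2\cdot(n/2)}\Big)\cdot 2n=\Big(1+\frac{3}{n}\Big)\cdot 2n=2n+6,$$
so Theorem \ref{thm1.2} only delivers the Liouville properties in the strictly smaller ranges governed by $2n+6$. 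This is precisely why Theorem \ref{thm1.3} is stated separately as a result ``of independent interest'': it is \emph{not} a corollary of Theorem \ref{thm1.2}.

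The missing ingredient is the volume growth estimate of Chang et al.\ \cite{ChaC2014}, recorded as Proposition \ref{prop2}(2): for a $(2n+1)$-dimensional complete Sasakian manifold with nonnegative Tanaka--Webster Ricci tensor, $\mu(B_R(x_0))\ls C_n R^{2n+3}$ for $R\gs 1$. The paper's Theorems \ref{thm4.3}, \ref{thm4.5} and \ref{thm4.7} are deliberately phrased in terms of an arbitrary exponent $D'$ satisfying $\limsup_{R\to\infty}\mu(B_R(x_0))/R^{D'}<\infty$ (together with the cut-off functions of Lemma \ref{lem3.5}, which do only need $\mathrm{CD}(0,n/2,1,2n)$ and Hypotheses (\ref{a1}--\ref{a4}), all guaranteed by Theorem 2.1 --- so your re-verification of the hypotheses is unnecessary, and in any case your construction $h_k=\chi(d_L(\cdot,x_0)/k)$ is problematic since $d_L$ is not smooth). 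Substituting $D'=2n+3$ from the Chang et al.\ bound into those three theorems is what produces the exponent ranges in Theorem \ref{thm1.3}; the formula $(1+3\kappa/(2\rho_2))d$ plays no role here, and indeed the paper remarks after Proposition \ref{prop2} that it is not optimal. To repair your argument you must replace the appeal to (\ref{equ1.4}) by this sharper volume growth input.
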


    For parabolic and hyperbolic equations with small initial data in the Euclidean case, 
    a quantitative estimate for lifespan time has been given in \cite{LN} and \cite{TY}. More recent results and detailed survey can 
    be find in \cite{Ikeda}. 
    This kind of results have been generalized to Heisenberg groups and Carnot groups in \cite{GPHei},
    \cite{GPCarnot}, respectively.
    
    In this paper we will extend such estimates to sub-Riemannian settings in subcritical case.

    \begin{theorem} \label{thm1.4}
        Let $(M,\mu)$ be a sub-Riemannian manifold satisfying Hypothesis {\rm (\ref{a1}-\ref{a4})},  $L$ is a sub-Laplacian on $M$.
        Assume that $M$ satisfies CD$(0,\rho_2,\kappa,d)$ and let $D$ be given in {\rm (\ref{equ1.4})}.
        \begin{itemize}
            \item[(1)]Assume that $$1<p< 1+\frac{2}{D} \quad {and}\quad \liminf_{R\to\infty}\int_{B_R}u_0(x)\dmu(x)>0.$$
               if $u(t,x): [0,T)\times M\to \R$ is a solution to
                \begin{equation*} 
                    \left\{
                        \begin{aligned}
                            &\partial_t u(t,x)-Lu \gs |u|^p, x\in M,t>0,    \\
                            &u(0,x)= \varepsilon u_0(x),\quad \varepsilon>0,
                        \end{aligned}
                    \right.
                \end{equation*} 
           then
             \begin{equation}\label{equ1.7}
             \begin{split}
             T\ls T(\varepsilon):=C\varepsilon^{-(\frac{1}{p-1}-\frac{D}{2})^{-1}}.
             \end{split}
             \end{equation}
            \item[(2)]Assume that $$1<p<\frac{D+1}{D-1}\quad and\quad \liminf_{R\to\infty}\int_{B_R}u_1(x)\dmu(x)>0.$$
               if  $u(t,x): [0,T)\times M\to \R$ is a solution to 
                \begin{equation*}
                    \left\{
                        \begin{aligned}
                            &\partial_{tt} u-Lu \gs |u|^p, x\in M,t>0,    \\
                            &u(0,x)=u_0(x),   \\
                            &\partial_t u(0,x)=\sigma u_1(x), \quad \sigma>0,  
                        \end{aligned}
                    \right.
                \end{equation*}
             then
              \begin{equation}\label{equ1.8}
              T\ls T(\sigma):= C\sigma^{-(\frac{p+1}{p-1}-D)^{-1}}.
              \end{equation}
        \end{itemize}
    \end{theorem}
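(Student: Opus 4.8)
The plan is to prove both parts by the rescaled test-function (nonlinear capacity) method of Mitidieri--Pohozaev, using as geometric input the two ingredients supplied by the earlier part of the paper: the family of \emph{good} cut-off functions $\psi_R$ (with $\psi_R\equiv 1$ on $B_R(x_0)$, $\mathrm{supp}\,\psi_R\subset B_{2R}(x_0)$, $\Gamma(\psi_R)\ls C/R^2$ and $|L\psi_R|\ls C/R^2$) and the polynomial volume bound $\mu(B_R(x_0))\ls CR^D$ that follows from CD$(0,\rho_2,\kappa,d)$. Writing $p'=\frac{p}{p-1}$, I would fix a smooth decreasing profile $\Phi\colon[0,\infty)\to[0,1]$ with $\Phi(0)=1$ and $\Phi\equiv0$ on $[1,\infty)$ (and, for the hyperbolic case, $\Phi'(0)=0$), and test the inequality against
$$\varphi_R(t,x)=\Phi\Big(\tfrac{t}{R^{\alpha}}\Big)^{\ell}\,\psi_R(x)^{\ell},\qquad \ell\gs p',$$
with the \emph{parabolic} scale $\alpha=2$ in part (1) and the \emph{hyperbolic} scale $\alpha=1$ in part (2); the exponent $\ell\gs p'$ guarantees that $|\partial_t^{(j)}\varphi_R|^{p'}\varphi_R^{-p'/p}$ and $|L\varphi_R|^{p'}\varphi_R^{-p'/p}$ remain bounded. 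Since $\varphi_R$ is supported in $t\in[0,R^{\alpha}]$, the test is admissible only when $R^{\alpha}<T$, and optimizing over the largest admissible $R$ is precisely what turns the nonexistence mechanism of Theorem \ref{thm1.2} into a quantitative lifespan bound.

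Multiplying the differential inequality by $\varphi_R\gs0$, integrating over $[0,\infty)\times M$, integrating by parts once in $t$ (part (1)) or twice in $t$ (part (2), where $\Phi'(0)=0$ annihilates the boundary term carrying $u_0$), and using the self-adjointness of $L$ in the space variable, I expect to arrive at
$$\iint|u|^{p}\varphi_R\dmu\dt+\varepsilon\int_M u_0\,\psi_R^{\ell}\dmu\ls\iint|u|\big(|\partial_t\varphi_R|+|L\varphi_R|\big)\dmu\dt$$
in the parabolic case, and at the analogous inequality with $\varepsilon u_0$ replaced by $\sigma u_1$ and $\partial_t\varphi_R$ by $\partial_{tt}\varphi_R$ in the hyperbolic case. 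The decisive feature is that the initial-data term enters with a favorable sign, which is exactly where the hypotheses on $\liminf_{R\to\infty}\int_{B_R}u_0$ (respectively $u_1$) will be used.

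I would then apply Young's inequality $|u|\,|\partial_t^{(j)}\varphi_R|\ls\frac14|u|^{p}\varphi_R+C|\partial_t^{(j)}\varphi_R|^{p'}\varphi_R^{-p'/p}$, together with the analogous bound for $L\varphi_R$, and absorb $\tfrac12\iint|u|^{p}\varphi_R$ into the left-hand side. Expanding $L(\psi_R^{\ell})=\ell\psi_R^{\ell-1}L\psi_R+\ell(\ell-1)\psi_R^{\ell-2}\Gamma(\psi_R)$ and invoking the good cut-off bounds, every resulting error term scales like $R^{-2p'}$; the $t$-integration contributes a factor $R^{\alpha}$ and the volume bound contributes $\mu(B_{2R})\ls CR^{D}$, so the right-hand side is dominated by $CR^{D+\alpha-2p'}$. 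Since $2p'=2+\frac{2}{p-1}$, this is $CR^{\,D-\frac{2}{p-1}}$ when $\alpha=2$ and $CR^{\,D-1-\frac{2}{p-1}}$ when $\alpha=1$. Dropping the nonnegative nonlinear integral and using the $\liminf$ hypotheses to secure $\int_M u_0\psi_R^{\ell}\dmu\gs c_0>0$ (respectively $\int_M u_1\psi_R^{\ell}\dmu\gs c_1>0$) for all large $R$, I obtain
$$\varepsilon\,c_0\ls CR^{\,D-\frac{2}{p-1}}\quad(\text{parabolic}),\qquad \sigma\,c_1\ls CR^{\,D-1-\frac{2}{p-1}}\quad(\text{hyperbolic}).$$

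In the strictly subcritical ranges $p<1+\frac{2}{D}$ and $p<\frac{D+1}{D-1}$ the two exponents are negative, so the estimates are strongest at the largest admissible scale. Letting $R^{\alpha}\uparrow T$, i.e.\ taking $R\sim T^{1/2}$ in part (1) and $R\sim T$ in part (2), gives $\varepsilon c_0\ls CT^{\frac{D}{2}-\frac{1}{p-1}}$ and $\sigma c_1\ls CT^{\,D-1-\frac{2}{p-1}}$; rearranging and using $\frac{2}{p-1}+1-D=\frac{p+1}{p-1}-D$ yields exactly (\ref{equ1.7}) and (\ref{equ1.8}). Given Theorem \ref{thm1.2}, the lifespan estimate is really its quantitative refinement, so I expect the main obstacle not to lie in this final optimization but in making the capacity estimate uniform at finite scale: one must check that the constant $C$ is independent of $R$, $\varepsilon$, $\sigma$ and $T$ (which rests on the scale-invariant good cut-off bounds), and one must control the initial-data integral under only a $\liminf$ hypothesis, in particular the annular contribution on $B_{2R}\setminus B_R$ where $\psi_R^{\ell}<1$.
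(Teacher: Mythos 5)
Your proposal is correct and follows essentially the same route as the paper: substitute the rescaled space--time test functions built from the ``good'' cut-offs of Lemma \ref{lem3.5} into the capacity estimates already derived in Section 4 (with parabolic scale $t\sim R^2$ and hyperbolic scale $t\sim R$), apply H\"older/Young and the volume bound $\mu(B_R)\ls CR^{D}$, use the $\liminf$ hypothesis to keep the initial-data term bounded below, and optimize over the largest admissible $R$. Your bookkeeping of the exponents ($CR^{D-\frac{2}{p-1}}$ and $CR^{D-\frac{p+1}{p-1}}$) and of the admissible range $R^{\alpha}<T$ reproduces exactly the bounds (\ref{equ1.7}) and (\ref{equ1.8}), so no further comparison is needed.
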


    \begin{remark}\label{rem1.5}
        In particular, Theorem \ref{thm1.4} apply to all Sasakian manifolds with nonnegative horizontal Tanaka-Webster Ricci curvature and Carnot groups of step 2.
    \end{remark}
 
      For parabolic differential inequalities,    the estimate   in Theorem \ref{thm1.4}(1) has been proven   in \cite{GPCarnot} for Carnot groups of 
        arbitrary step.  For the hyperbolic case,  Theorem\ref{thm1.4}(2)  implies that the  the lifespan estimate   in (\ref{equ1.8})  holds for Carnot groups of step 2.  So, at last, we  will establish an lifespan estimate for the hyperbolic differential inequalities  on Carnot groups of arbitrary step, independent of Theorem \ref{thm1.4}.

    \begin{theorem}\label{thm1.7}
        Let $\G$ be a Carnot group of arbitrary step, and $Q$ be its homogeneous dimension. Assume $1<p\ls \frac{Q+1}{Q-1}$. Then there exists a constant $\sigma_0>0$ such that for any $\sigma\in(0,\sigma_0)$ and any solution $u:[0,T)\times M\to \R$ of  
        \begin{equation*}
            \left\{
                \begin{aligned}
                    &\partial_{tt}u-\Delta_\G u \gs |u|^p, x\in \G,t>0,    \\
                    &u(0,x)=u_0(x),   \\
                    &\partial_tu(0,x)=\sigma u_1(x),  \sigma>0,    
                \end{aligned}
            \right.
        \end{equation*}
        with $u_1$ satisfying
        \begin{equation*} 
            \liminf_{R\to\infty}\int_{B_R(0)}u_1(x)\dmu(x)>0,
        \end{equation*}
        we have    
        \begin{equation}
            T(\sigma)\ls
            \begin{cases}
                C\sigma^{-({\frac{p+1}{p-1}-Q)}^{-1}} & if\ p\in(1,\frac{Q+1}{Q-1}), \\
                \exp(C\sigma^{-(p-1)}) & if\ p=\frac{Q+1}{Q-1},
            \end{cases}
        \end{equation}
        where $C$ is a positive constant independent of $\sigma$.
    \end{theorem}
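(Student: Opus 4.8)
The plan is to run the nonlinear-capacity (rescaled test function) method, using the Carnot dilation structure in place of Euclidean scaling, and to treat the critical exponent by a logarithmic refinement of the cut-off functions. A virtue of working on a Carnot group of arbitrary step is that the dilations $\delta_\lambda$ furnish explicit cut-offs through a homogeneous gauge, so one does not need the general ``good cut-off'' construction used for Theorem \ref{thm1.4}; this is what makes the arbitrary-step (and critical) case accessible.

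First I would fix a homogeneous gauge $N$ on $\G$, so that $\Delta_\G$ is $\delta_\lambda$-homogeneous of degree $2$ and $\mu(B_R)\sim R^Q$, and test the inequality against a nonnegative $\phi(t,x)=\eta(t/R)^\ell\,\psi(N(x)/R)^\ell$, where $\ell$ is large, $\eta\equiv 1$ near $0$ with $\eta'(0)=0$ and $\mathrm{supp}\,\eta\subset[0,1)$, and $\psi\equiv 1$ on $[0,1]$ with $\mathrm{supp}\,\psi\subset[0,2)$. Multiplying $\partial_{tt}u-\Delta_\G u\gs|u|^p$ by $\phi\gs0$, integrating over $[0,R)\times\G$, integrating by parts twice in $t$ (the condition $\eta'(0)=0$ annihilates the $u_0$-term, and $\eta(1)=\eta'(1)=0$ the terminal terms), and using the self-adjointness of $\Delta_\G$, I obtain
\[
\int\!\!\int |u|^p\phi\,\dmu\dt+\sigma\int_\G u_1\,\psi(N/R)^\ell\,\dmu\ls \int\!\!\int u\,(\partial_{tt}\phi-\Delta_\G\phi)\,\dmu\dt .
\]
Writing $p'=p/(p-1)$ and applying Young's inequality $ab\ls\tfrac14 a^{p}+Cb^{p'}$ with $a=|u|\phi^{1/p}$ to both terms on the right, then absorbing $\tfrac12\int\!\!\int|u|^p\phi$ into the left, I am left with
\[
\sigma\int_\G u_1\,\psi(N/R)^\ell\,\dmu\ls C\Big(\int\!\!\int\frac{|\partial_{tt}\phi|^{p'}}{\phi^{p'/p}}+\int\!\!\int\frac{|\Delta_\G\phi|^{p'}}{\phi^{p'/p}}\Big)=:C\,J(R).
\]

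Because $\Delta_\G$ is $\delta_\lambda$-homogeneous of degree $2$ and $\mu(B_R)\sim R^Q$, the rescaling $x=\delta_R y$, $t=Rs$ shows $J(R)\sim R^{Q+1-2p'}$; this is exactly why the time- and space-scales balance at the common rate $R$, and one checks that $Q+1-2p'<0$ iff $p<\tfrac{Q+1}{Q-1}$, with equality at the critical exponent, and that $2p'-Q-1=\tfrac{p+1}{p-1}-Q$. Taking $\ell$ large keeps the integrand exponents nonnegative, so these integrals converge. In the subcritical range $p<\tfrac{Q+1}{Q-1}$ I conclude at once: since $u$ exists only on $[0,T)$, the time-support of $\phi$ must lie inside $[0,T)$, forcing $R<T$; letting $R\uparrow T$ and invoking $\liminf_{R\to\infty}\int_{B_R}u_1\,\dmu>0$ (which keeps the left side $\gs c\,\sigma$ for all large $R$, after refining the spatial cut-off if necessary), I get $c\,\sigma\ls C\,T^{-(2p'-Q-1)}$, that is $T\ls C\sigma^{-(\frac{p+1}{p-1}-Q)^{-1}}$.

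Finally, the critical case $p=\tfrac{Q+1}{Q-1}$ is the main obstacle: there $J(R)\sim R^{0}$ is scale invariant, so the argument above yields only $\sigma\ls C$ and no bound on $T$. The remedy is to keep H\"older (rather than Young) in the key step, so the right-hand side retains a factor $\big(\int\!\!\int_{A_R}|u|^p\phi\big)^{1/p}$ over the annular region $A_R$ where the derivatives of $\phi$ are supported, and to replace the single cut-off by a logarithmically graded family of gauge-adapted cut-offs across the dyadic scales $1\ll R\ll T$. Summing the scale-invariant borderline contributions over the $\sim\log T$ dyadic scales, together with the uniform bound on $\int\!\!\int|u|^p\phi$ coming from the borderline estimate, should upgrade the inequality to $\sigma\ls C(\log T)^{-1/(p-1)}$, equivalently $\log T\ls C\sigma^{-(p-1)}$, giving $T\ls\exp(C\sigma^{-(p-1)})$. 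I expect the technical heart to be calibrating the logarithmic weight in the cut-off so that precisely the power $1/(p-1)$ emerges, together with controlling the sign of the $u_1$-integral on the annuli via the $\liminf$ hypothesis.
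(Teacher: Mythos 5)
Your subcritical argument is sound and is essentially the paper's own route for that range (Theorem \ref{thm5.1}/\ref{thm5.2} style: test against a scaled cut-off, absorb via Young, and let $R\uparrow T$ using $J(R)\sim R^{Q+1-2p'}$ with $2p'-Q-1=\frac{p+1}{p-1}-Q$). The gap is in the critical case $p=\frac{Q+1}{Q-1}$, which is the actual point of Theorem \ref{thm1.7}. The dyadic-summation scheme you describe does not deliver the stated exponent: from the uniform bound $\iint|u|^p\varphi_R\ls C$ and the disjointness of the $\sim\log T$ annuli, pigeonholing the annular contributions gives some scale with $\iint_{A_{R_{j_0}}}|u|^p\varphi\ls C/\log T$, hence only $\sigma\ls C(\log T)^{-1/p}$, i.e. $T\ls\exp(C\sigma^{-p})$ — strictly weaker than $\exp(C\sigma^{-(p-1)})$. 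You flag yourself that you do not see where $1/(p-1)$ comes from; that is precisely the missing idea, and no calibration of a logarithmic weight in the cut-off alone will produce it.

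What the paper does instead (following Ikeda--Sobajima) is to convert the H\"older estimate into a differential inequality in the scale parameter and integrate it, which yields both regimes at once. Concretely, it uses a single cut-off adapted to the space--time gauge, $\varphi_R=g\bigl((t^{2r!}+|x|_\G^{2r!})/R^{2r!}\bigr)$ with $|g'|+|g''|\ls Cg^{1/p}$, so that $\varphi_\rho^*(t,x)=g^*(A/\rho^{2r!})$ with $A=t^{2r!}+|x|_\G^{2r!}$, and the elementary Lemma \ref{lem},
\begin{equation*}
\int_0^R g^*\Bigl(\frac{A}{\rho^{h}}\Bigr)\,\frac{\drho}{\rho}\ \ls\ \frac{\ln 2}{h}\,g^*\Bigl(\frac{A}{R^{h}}\Bigr),
\end{equation*}
gives $\frac{2r!}{\ln 2}\int_0^R Y(\rho)\,\frac{\drho}{\rho}\ls X(R)$ where $Y(\rho)=\iint|u|^p\varphi_\rho^*$ and $X(R)=\iint|u|^p\varphi_R$. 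Setting $W(R)=\frac{2r!}{\ln 2}\int_0^R Y(\rho)\frac{\drho}{\rho}+\sigma K$, the H\"older step becomes $W(R)\ls CR^{Q-1-\frac{Q+1}{p}}\,Y(R)^{1/p}$ with $Y(R)=\frac{\ln 2}{2r!}RW'(R)$, whence $\frac{d}{dR}W^{1-p}\ls -CR^{Q-p(Q-1)}$. Integrating from $R_0$ to $T(\sigma)$ and using $W(R_0)\gs\sigma K$ gives $(\sigma K)^{1-p}\gs C\,T^{Q+1-p(Q-1)}$ in the subcritical case and $(\sigma K)^{1-p}\gs C\log(T/R_0)$ at criticality; the power $1-p$ of $W$ on the left is exactly what produces $\sigma^{-(p-1)}$. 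This mechanism (or an equivalent discrete recursion $a_j^p\ls C(a_{j+1}-a_j)$ telescoped through $a_j^{1-p}$, where $a_j=\sigma K+\sum_{i\ls j}Y_{\mathrm{ann}}(R_i)$) is what your proposal would need to supply; note also that the single-gauge cut-off, rather than your product $\eta(t/R)\psi(N(x)/R)$, is what makes the integration over scales a one-line computation.
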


    This paper is organized as follows. In Section 2, we recall some facts about sub-Riemannian manifolds with transverse symmetry.
    Then in Section 3, we construct the cut-off functions on sub-Riemannian manifolds. In Section 4, we prove the nonexistence results then use them to drive the upper bounds of lifespan time for parabolic and 
    hyperbolic inequalities in Section 5. In the last section, we independently prove the upper bounds of lifespan time for 
    hyperbolic inequalities on Carnot group with arbitrary step.
\\
  
{\bf Acknowledgments}. This work was supported by the National Natural Science Foundation of China (NSFC) (No. 11521101 and 12025109).

\section{Preliminaries}

    \subsection{The  sub-Riemannian geometry and the generalized curvature-dimension inequality} $\ $ \label{Pre}
    
        In this subsection we will briefly recall the notations on sub-Riemannian manifolds  and some consequences of generalized curvature-dimension inequalities introduced in \cite{BG2017}.
        
      Let   $(M,g_M)$ be a $(d+\mathfrak{h})$-dimensional Riemannian manifold equipped with a sub-Riamannian structure $(\cH,g:=g_M|_{\mathcal H})$ with transverse symmetries in the sense of Baudoin-Garofalo in \cite{BG2017}. 
        That is,  $\cH$ is a bracket-generating 
        distribution of dimension $d$, and $\mathcal V$ is a vertical distribution of  dimension $\mathfrak h$ such that the decomposition $T_xM=\cH(x)\oplus\mathcal V(x)$ is orthogonal with respect to $g_M$ at every point $x\in M$.   The distribution $\cH$ (and $\mathcal V$ resp.) will be referred to as the set of $horizontal$ (and $vertical$ resp.) directions.  Now the Riemannian metric 
        $$g_M=g\oplus g_V,$$
        where $g_V=g_M|_{\mathcal V}$ is the induced by $g_M$ onto the vertical bundle $\mathcal V$.
        Note that now Hypothesis (H\ref{a1}) is equivalent to that $(M,g_M)$ is a complete Riemannian manifold. 
        The horizontal gradient $\nabla_Hf$ (and the vertical gradient $\nabla_Vf$ resp.) of a function $f$ is given by the projections
        of the gradient of $f$ via $g_M$ on the horizontal bundle (and vertical bundle resp.). 
        We refer to Sect. 2.3 of \cite{BG2017} for the details.       
  
        Let $\mu$ be the Riemannian measure of $(M, g_M)$. 
        The canonical sub-Laplacian in this structure is the diffusion operator $L$ which is symmetric with respect to $\mu$
        and its $carr\acute{e}\ du\ champ$ is given by
\begin{equation}
\begin{split}
\Gamma(f_1,f_2)&=\frac 1 2(L(f_1f_2)-f_1Lf_2-f_2Lf_1)\\
&=g_M(\nabla_Hf_1,\nabla_Hf_2),\qquad\qquad f_1,f_2\in C^\infty_0(M).
\end{split}
\end{equation}
        It is known in \cite{BG2017} that this operator $L$ satisfies (1.2) in Introduction. Suppose that $d_L(x,y)$ is 
        the metric canonically associated with $L$ and $\Gamma$ given in (1.3) in Introduction. This distance $d_L(x,y)$ can be given as follows. An absolutely continuous curve $\gamma:[0,T]\to M$ is called subunit  for the operator $\Gamma$ if for  almost all $t\in[0,T]$ one has $\gamma'(t)\in \mathcal H(\gamma(t))$ and  $|\frac{d}{dt}f\circ \gamma(t)|^2\ls \Gamma(f)(\gamma(t))$
for every smooth function $f$ on $M$.  For each pair $(x,y)$, the set
$$S(x,y):=\big\{\gamma:[0,T]\to M\ |\ \gamma \ {\rm is\ subunit\ and }\ \gamma(0)=x,\ \gamma(T)=y\big\}.$$
Since the  distribution $\mathcal H$ is bracket-generated,  by the Chow-Rashevsky theorem,  one has  $S(x,y)\not=\emptyset$ for any $x,y\in M$. Hence the Hypothesis (H\ref{a3}) holds. It is known that $d(x,y)$ in (1.3) can be also given as
 $$d_L(x,y)=\inf_{\gamma\in S(x,y)} \int_0^T|\gamma'(t)|_gdt.$$

Recalling that, locally in the neighborhood of each $x\in M$ the sub-Laplacian $L$ can be written as 
        $$L=-\sum_{i=1}^d X_i^* X_i,$$
        where the vector fields $X_i$ are bracket-generators (a base of $\cH$) and  $X_i^*$ are the formal adjoint of $X_i$ with respect to $\mu$. Direct computation shows that
        $$L=\sum_{i=1}^d X_i^2+\mathrm{div}_{\mu}(X_i)X_i,$$
        where $\mathrm{div}_{\mu}(X_i)$ is defined by the identity
        $$\mathcal{L}_{X_i}\mu=\mathrm{div}_{\mu}(X_i)\mu,$$
        in which $\mathcal{L}_{X_i}$ denots the Lie derivative in the direction $X_i$.
        Since $\cH$ is bracket-generating,  it is well-known that %by $\mathrm{H\ddot{o}mander}$ \cite[Theorem 1.1]{Hypoelliptic}, 
        $L$ is hypoelliptic.

   %     It is easy to see that $L$ is symmetric with respect to $\mu$ and non-positive, i.e.
    %    $$\int_M fLg \dmu=\int_M gLf \dmu, \quad \int_M fLf \dmu \ls 0,$$
   %     for all $f,g\in C_0^\infty(M)$.
        The completeness of $(M,d_L)$ as a metric space guarantees the uniqueness of the solution to the Cauchy problem
        \begin{equation} \label{heat}
            \left\{
                \begin{aligned}
                    &\partial_tu-Lu=0  \\
                    &u|_{t=0}=f\in L^2(M,\mu)
                \end{aligned}
            \right.
        \end{equation}
        on $[0,\infty)\times M$. And by the hypoellipcity of $L$, such a solution is $C^\infty$ in $(0,\infty)\times M$.
        Moreover, the unique solution to (\ref{heat}) on $(0,\infty)\times M$ can be written as
        $$u(t,x)=\int_M p(x,y,t)f(y)\dmu(y),$$
        where $p(\cdot,\cdot,t)$ is the co-called heat kernel, which is smooth and symmetric on $(0,\infty)\times M \times M$.
        The heat semigroup $(P_t)_{t\gs 0}$ associated to $L$ can be defined as
        $$P_tf(x)=\int_M p(x,y,t)f(y)\dmu(y), \quad \mathrm{for} \ f\in L^2(M,\mu),$$
        Note that $(P_t)_{t\gs 0}$ is a sub-Markov semigroup, i.e. $P_t1\ls 1.$

        The bilinear form $\Gamma^Z$ on a sub-Riemannian manifold $M$ with transverse symmetries  
        can be chosen canonically by \cite{BG2017} 
        \begin{equation}
            \Gamma^Z(f,g)=g_M(\nabla_Vf_1,\nabla_Vf_2), \quad \forall f_1,f_2\in C^\infty_0(M).
        \end{equation}
        It was proved in \cite{BG2017} that  $\Gamma^Z$ satisfies Hypothesis (H\ref{a2}) in Introduction. 
        The  combination of the completeness of $g_M$ and the Nagel-Stein-Wainger theorem \cite{NSW1985} ensures that Hypothesis (H\ref{a1}) is fulfilled.

 At last, it was shown in \cite[Theorem 4.3]{BG2017} that the Hypothesis (H\ref{a4}) follows from the generalized curvature-dimension inequality \CD \  in the definition in Introduction on Sasakian manifolds and Carnot groups of step two. In the following two subsections, we will  recall some facts on these two classes of important geometrical  examples.

        \subsection{CR Sasakian manifolds} $\ $
        
        Let $(M,\theta)$ be a complete  strictly pseudo-convex CR Sasakian manifold with $(2n+1)$  real dimension, where $\theta$ is a pseudo-hermitian form on $M$ such that its Levi form is positive definite.  The kernel of $\theta$ provides a horizontal bundle $\cH$. Let $Z$ be the Reeb vector field with respect to $\theta$.
  Let $\nabla$ be the Tanaka-Webster connection of $M$. We recall that the CR manifold $(M, \theta)$ is called Sasakian if the pseudo-hermitian
torsion of $\nabla $ vanishes. For example,
the standard CR structures on the $(2n+1)$-dimensional Heisenberg group $\mathbb H^n$ and the sphere $\mathbb S^{2n+1}$ are
Sasakian. 

In every Sasakian manifold $(M,\theta)$, the Reeb vector field $Z$ provides a vertical vector field. Let $g_1$ be an extension of Levi form such that $g_1(X,Z)=0$ and $g_1(Z,Z)=1$ for all $X\in \cH$. Now the canonical choice of $\Gamma^Z$ is 
$$\Gamma^Z(f)=|Zf|_{g_1},\qquad \forall f\in C^\infty_0(M).$$
\begin{theorem}[Theorem 1.7 of \cite{BG2017}]
Let $(M,\theta)$ be a complete Sasakian manifold with real dimension $2n+1$. If for every $x\in M$, the Tanaka-Webster Ricci tensor satisfies the bound
$$Ric_x(v,v)\gs \rho_1|v|^2$$
for every horizontal vector $ v\in\cH_x$, then for the CR sub-Laplacian of $M$ the curvature dimension inequality $\mathrm{CD}(\rho_1,n/2,1,2n)$ holds and Hypothesis {\rm (\ref{a1}-\ref{a4})} are satisfied. 
\end{theorem}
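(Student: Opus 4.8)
The plan is to verify the two assertions separately. The genuinely analytic content is the generalized curvature-dimension inequality $\mathrm{CD}(\rho_1,n/2,1,2n)$; the four Hypotheses then reduce, on a Sasakian manifold, to facts already assembled in Section 2. For the inequality I would work with the Tanaka-Webster connection $\nabla$ and derive a generalized Bochner identity for the CR sub-Laplacian. Fix a local horizontal orthonormal frame $\{e_1,\dots,e_{2n}\}$ adapted to the complex structure $J$ on $\cH$, together with the Reeb field $Z$, and exploit the two defining features of the Sasakian condition: the pseudo-Hermitian torsion vanishes, and $J$ and $g$ are $\nabla$-parallel, so that $Z$ generates a transverse symmetry and $[Z,L]=0$. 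Writing $L=\mathrm{tr}_{\cH}\nabla^2$, one expands $\Gamma_2(f)=\tfrac12\big(L\Gamma(f)-2\Gamma(f,Lf)\big)$ by commuting covariant derivatives.

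The Ricci identity produces the Tanaka-Webster Ricci curvature acting on $\nabla_H f$; the contact relation $[e_i,e_{n+i}]\sim Z$ produces a purely vertical term proportional to $(Zf)^2$; and the surviving commutators produce a mixed term coupling $\nabla_H f$ to $\nabla_H(Zf)$ through $J$. The target identity should read
$$\Gamma_2(f)=\|\nabla^2_H f\|^2+\mathrm{Ric}(\nabla_H f,\nabla_H f)+\tfrac{n}{2}(Zf)^2+2\langle J\nabla_H f,\nabla_H(Zf)\rangle,$$
where $\|\nabla^2_H f\|^2$ is the Hilbert-Schmidt norm of the horizontal Hessian. In parallel I would compute $\Gamma_2^Z(f)$ from $\Gamma^Z(f)=(Zf)^2$: expanding $\tfrac12\big(L(Zf)^2-2(Zf)LZf\big)$ and using $ZLf=LZf$ collapses everything to the decisive simplification
$$\Gamma_2^Z(f)=\Gamma(Zf)=\|\nabla_H(Zf)\|^2,$$
so that $\nu\Gamma_2^Z(f)$ supplies exactly the quantity needed to absorb the mixed term above.

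Assembling the two identities, I would bound $\|\nabla^2_H f\|^2\gs\tfrac{1}{2n}(Lf)^2$ by Cauchy-Schwarz applied to the trace (here $\dim\cH=2n$, which fixes $d=2n$), use the hypothesis to get $\mathrm{Ric}(\nabla_H f,\nabla_H f)\gs\rho_1\Gamma(f)$, retain $\tfrac{n}{2}(Zf)^2=\tfrac{n}{2}\Gamma^Z(f)$ (which fixes $\rho_2=n/2$), and control the mixed term for every $\nu>0$ by Young's inequality together with $|J\nabla_H f|=|\nabla_H f|$:
$$2\langle J\nabla_H f,\nabla_H(Zf)\rangle\gs-\tfrac{1}{\nu}\|\nabla_H f\|^2-\nu\|\nabla_H(Zf)\|^2.$$
Adding $\nu\Gamma_2^Z(f)=\nu\|\nabla_H(Zf)\|^2$ cancels the last term and yields precisely
$$\Gamma_2(f)+\nu\Gamma_2^Z(f)\gs\tfrac{1}{2n}(Lf)^2+\big(\rho_1-\tfrac{1}{\nu}\big)\Gamma(f)+\tfrac{n}{2}\Gamma^Z(f),$$
i.e. $\mathrm{CD}(\rho_1,n/2,1,2n)$ with $\kappa=1$. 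For the Hypotheses: (H\ref{a3}) is Chow-Rashevsky from bracket-generation; (H\ref{a2}) is the commutation $\Gamma(f,\Gamma^Z(f))=\Gamma^Z(f,\Gamma(f))$, which follows from the parallelism of $J$; (H\ref{a1}) follows from completeness of $g_M$ combined with the Nagel-Stein-Wainger volume estimate, furnishing the exhausting cutoffs $h_k$; and (H\ref{a4}) follows from the inequality just proved via the cited Theorem~4.3 of \cite{BG2017}.

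The main obstacle is the Bochner identity itself with the \emph{exact} coefficients $\tfrac{n}{2}$ on $(Zf)^2$ and $2$ on the mixed term, since the precise curvature-dimension data $(\rho_2,\kappa)=(n/2,1)$ hinge entirely on getting these commutator and curvature coefficients right; everything downstream is Cauchy-Schwarz, the Ricci bound, and one application of Young's inequality.
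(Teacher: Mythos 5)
The paper does not prove this statement: it is imported verbatim as Theorem~1.7 of \cite{BG2017} and used as a black box, so there is no internal proof to compare against. Your sketch is a correct reconstruction of the Baudoin--Garofalo argument — the Bochner identity for the Tanaka--Webster connection, the identity $\Gamma_2^Z(f)=\Gamma(Zf)$ from $[Z,L]=0$, Cauchy--Schwarz on the trace of the horizontal Hessian, and Young's inequality on the mixed term absorbing $\nu\Gamma_2^Z(f)$ — and the bookkeeping $(\rho_2,\kappa,d)=(n/2,1,2n)$ is consistent. The only point needing care, which you flag yourself, is that the term $\tfrac{n}{2}(Zf)^2$ arises precisely as the antisymmetric part of the (non-symmetric) horizontal Hessian, so the $\|\nabla^2_Hf\|^2$ in your identity must be read as the symmetrized Hessian (to which the trace inequality $\|\cdot\|^2\gs\tfrac{1}{2n}(Lf)^2$ is then applied); with that reading, and the torsion normalization $d\theta(X,Y)=\langle JX,Y\rangle$, the coefficients come out as stated.
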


         \subsection{Carnot groups} \label{CarnotGroups}$ \ $
         
            Let $(\G,\ast)$ be a connected and simply connected Lie group whose Lie algebra $\mathfrak{g}$ admits a stratification, 
            i.e. a direct sum decomposition
            \begin{equation} \label{stratification}
                \mathfrak{g}=V_1\oplus V_2\oplus\cdots\oplus V_r \quad \mathrm{with} \quad
                \left\{
                    \begin{aligned}
                        &[V_1,V_{i-1}]=V_i, \quad \mathrm{if} \  2\ls i\ls r, \\
                        &[V_1,V_r]=0.
                    \end{aligned}
                \right.
            \end{equation}
            Then $\G$ is called a {\emph{Carnot group (or stratified Lie group) of step $r$}}. 
     Let $n_i=\dim V_i$ for all $1\ls i\ls r$ and  $n=\sum_{i=1}^r n_i$,  which is  the dimension of the 
            vector space $\mathfrak{g}$.  The strata $V_1$ is called the horizontal layer of $\G$.

  Since $\G$ is simply connected, the exponential map $\exp: \mathfrak{g}\to \G$
   is a global diffeomorphism.  Given a basis 
            $$\mathcal{X}=\{X_1^{(1)},\cdots,X_{n_1}^{(1)};\cdots;X_1^{(r)},\cdots,X_{n_r}^{(r)}\}$$
            of $\mathfrak{g}$ adapted to the stratification in (\ref{stratification}), 
            i.e.  $\{X_1^{(i)},\cdots,X_{n_i}^{(i)}\}$ is a basis of $V_i$ for every $1\ls i\ls r$.
  The exponential map  enables us a global coordinate system on $\G$. We can identify the points of $ \G $ with whose coordinate  $x=(x^{(1)},\cdots,x^{(r)})\in \R^n$,where $x^{(i)}=(x_1^{(i)},\cdots,x_{n_i}^{(i)})\in \R^{n_i}$.

             The stratified structure of $\mathfrak{g}$ natrually introduce
            a family  of dialations $\{\delta_{\lambda}\}_{\lambda>0}$  on $\R^n$ of the following form:
            \begin{equation}\label{equ2.5}
            \delta_{\lambda}(x^{(1)},x^{(2)},\cdots,x^{(r)})=(\lambda x^{(1)},\lambda^2x^{(2)},\cdots,\lambda^rx^{(r)}).
            \end{equation}
        For each $1\ls k \ls n_i$,  $X_k^{(i)}$ is $\delta_\lambda$-homogeneous of degree $i$, i.e.
            $X_k^{(i)}(\varphi\circ\delta_\lambda)=\lambda^i X_k^{(i)}(\varphi)\circ\delta_\lambda$ for \ every \ $\varphi \in C^{\infty}(\G)\ \mathrm{and}\ \lambda>0.$
 
A homogeneous norm $|x|_{\G} $ is given by 
            \begin{equation}\label{equ2.6}
            |x|_{\G}:=\left(\sum_{i=1}^r|x^{(i)}|^{\frac{2r!}{i}}\right)^{\frac{1}{2r!}},
            \end{equation}
          where $|x^{(i)}|^2= \sum_{j=1}^{n_i}(x_{j}^{(i)})^2$ is  the norm on  $\R^{n_i}$ for each $i=1,\cdots, r.$
Remark that $|x|_{\G}$ is $\delta_\lambda$-homogeneous of degree 1 and that $|\cdot|_{\G}^{2r!}$ is a $C^\infty$ function.
            The distance defined by
            \begin{equation}\label{equ2.7}
            d_\G(x,y)=|y^{-1}\cdot x|_{\G}
            \end{equation}
            is equivalent to the Carnot-Carath\'eodory distance on $\G$ with respect to the horizontal layer $V_1$.
          
             Let $\mu$ be the Haar measure on $\G$, which is induced by the exponential map from the Lebesgue measure on $\mathfrak{g}$.   Then $\mu(\delta_\lambda(E))=\lambda^Q\mu(E)$ for any $\mu$-measurable  subset $E\subset \G$, where the number 
            $$Q:=\sum_{i=1}^ri\dim{V_i}$$
            is called the {\emph{ homogeneous dimension}} of $\G$. In particular,          
           $\mu(B^{\G}_R(x_0))=\omega_\G R^Q$, where $\omega_\G =\mu(B^{\G}_1(0))$ and 
          $B^{\G}_R(x_0):=\{x\in \G:d_\G(x,x_0)<R\}.$

 We denote $d=n_1$ the dimension of the horizontal layer of $\G$ and  write its basis   $\{X^{(1)}_1,\cdots,X^{(1)}_d\}$   as $\{X_1,\cdots,X_d\}$.        
     Then, under the Haar measure $\mu$, we have    $X_k^*=-X_k$ for each $1\ls k \ls d$, and the sub-Laplacian on $\G$ can be expressed as
            $$\Delta_{\G}=\sum_{k=1}^d X_k^2.$$
            
  If a Carnot group $G$ has step $r=2$, it provides a natural sub-Riemannian with transverse symmetries via $\mathfrak{g}=V_1\oplus V_2.$
We denote $\mathfrak{h}=n_2$ and $Z_j=X^{(2)}_j$ for all $j=1,\cdots, n_2$.

\begin{proposition}[Proposition 2.21 in \cite{BG2017}]
    Let $\G$ be a Carnot group of step two, and let $d$ be the dimension of the horizontal layer of its Lie algebra. Then $\G$ satisfies
    the generalized curvature-dimension inequality $\mathrm{CD}(0,\rho_2,\kappa,d)$ (with respect to any sub-Laplacian on $\G$) with 
    $$\rho_2=\inf_{\|z\|=1}\frac{1}{4}\sum_{i,j=1}^d \left(\sum_{m=1}^\mathfrak{h}\gamma_{ij}^m z_m\right)^2,$$
    $$\kappa=\sup_{\|x\|=1}\sum_{j=1}^d \sum_{m=1}^\mathfrak{h}\left(\sum_{i=1}^d\gamma_{ij}^m x_m\right)^2,$$ 
    where $\gamma_{ij}^m$ is the coefficients such that 
    $$[X_i,X_j]=\sum_{m=1}^\mathfrak{h}\gamma_{ij}^mZ_m.$$
\end{proposition}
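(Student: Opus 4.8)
The plan is to verify the defining inequality of $\mathrm{CD}(0,\rho_2,\kappa,d)$ directly, by an explicit Bochner-type computation that exploits the two special features of a step-two Carnot group: the sub-Laplacian is the pure sum of squares $L=\sum_{k=1}^d X_k^2$ (the $X_k$ are $\mu$-divergence free under the Haar measure), and the stratification $\mathfrak g=V_1\oplus V_2$ forces the commutation relations
$$[X_i,X_k]=\sum_{m=1}^{\mathfrak h}\gamma_{ik}^m Z_m, \qquad [X_k,Z_m]=0,\qquad [Z_m,Z_{m'}]=0,$$
with \emph{constant} coefficients $\gamma_{ik}^m$. The vanishing of $[X_k,Z_m]$ is exactly where step two enters; it is what keeps everything finite-dimensional and ultimately produces $\rho_1=0$.

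First I would compute the two iterated forms. Starting from $\Gamma(f)=\sum_i(X_if)^2$ and differentiating twice, using $[X_i,L]=2\sum_{k,m}\gamma_{ik}^m X_kZ_m$ (where $[X_k,Z_m]=0$ is used to symmetrize the two commutator terms), one obtains
$$\Gamma_2(f)=\sum_{i,k=1}^d (X_kX_if)^2-2\sum_{i,k=1}^d\sum_{m=1}^{\mathfrak h}(X_if)\,\gamma_{ik}^m\,X_kZ_mf.$$
The same bookkeeping applied to $\Gamma^Z(f)=\sum_m(Z_mf)^2$, now with $[Z_m,L]=0$, collapses the vertical iterated form to a pure square,
$$\Gamma_2^Z(f)=\sum_{k=1}^d\sum_{m=1}^{\mathfrak h}(X_kZ_mf)^2\gs 0.$$

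The decisive step is to decompose the horizontal Hessian $H_{ik}:=X_kX_if$ into symmetric and antisymmetric parts $S_{ik},A_{ik}$. Since $A_{ik}=\tfrac12[X_k,X_i]f=-\tfrac12\sum_m\gamma_{ik}^m Z_mf$ and $\operatorname{tr}S=\sum_kX_k^2f=Lf$, the orthogonality of $S$ and $A$ together with Cauchy--Schwarz $(\operatorname{tr}S)^2\ls d\sum_{i,k}S_{ik}^2$ gives
$$\sum_{i,k}(X_kX_if)^2\gs\frac1d(Lf)^2+\frac14\sum_{i,k}\Big(\sum_m\gamma_{ik}^m Z_mf\Big)^2,$$
whose second term will become $\rho_2\Gamma^Z(f)$. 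To handle the indefinite cross term in $\Gamma_2$ I would apply Young's inequality with weight $\nu$ to each product $-2\big(\sum_i\gamma_{ik}^m X_if\big)(X_kZ_mf)$, bounding it below by $-\nu(X_kZ_mf)^2-\tfrac1\nu(\sum_i\gamma_{ik}^m X_if)^2$; the first batch sums to exactly $-\nu\Gamma_2^Z(f)$ and cancels the $\nu\Gamma_2^Z(f)$ on the left, leaving
$$\Gamma_2(f)+\nu\Gamma_2^Z(f)\gs\frac1d(Lf)^2+\frac14\sum_{i,k}\Big(\sum_m\gamma_{ik}^m Z_mf\Big)^2-\frac1\nu\sum_{k,m}\Big(\sum_i\gamma_{ik}^m X_if\Big)^2.$$
Reading the last two quadratic forms in the variables $z=(Z_mf)_m\in\R^{\mathfrak h}$ and $x=(X_if)_i\in\R^d$ and passing to their extremal Rayleigh quotients identifies them with $\rho_2\,\Gamma^Z(f)$ and $-\tfrac\kappa\nu\,\Gamma(f)$, where $\rho_2=\inf_{\|z\|=1}\tfrac14\sum_{i,k}(\sum_m\gamma_{ik}^m z_m)^2$ and $\kappa=\sup_{\|x\|=1}\sum_{k,m}(\sum_i\gamma_{ik}^m x_i)^2$ are precisely the stated constants, completing the verification of $\mathrm{CD}(0,\rho_2,\kappa,d)$.

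The main obstacle is the careful commutator bookkeeping that produces the two iterated forms---in particular tracking the factor $2$ in $[X_i,L]$ and the antisymmetry $\gamma_{ki}^m=-\gamma_{ik}^m$ in the Hessian split---and then matching the emergent quadratic forms verbatim to the Rayleigh-quotient definitions of $\rho_2$ and $\kappa$. Once the forms are in hand the two inequalities themselves, Cauchy--Schwarz for the trace and Young for the cross term, are routine.
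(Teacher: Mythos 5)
Your proof is correct, and it is essentially the standard verification: the paper itself gives no argument for this statement (it is quoted verbatim as Proposition 2.21 of \cite{BG2017}), and your computation---the explicit forms of $\Gamma_2$ and $\Gamma_2^Z$ from the step-two commutation relations, the symmetric/antisymmetric splitting of the horizontal Hessian with Cauchy--Schwarz on the trace, and Young's inequality with weight $\nu$ on the cross term---is precisely how the cited result is established at the source. One remark worth recording: your derivation shows that the formula for $\kappa$ as printed in the statement contains a typo, since the inner sum runs over $i$ while the variable is written $x_m$; the correct expression, which your Rayleigh-quotient identification produces, is $\kappa=\sup_{\|x\|=1}\sum_{j=1}^d\sum_{m=1}^{\mathfrak h}\bigl(\sum_{i=1}^d\gamma_{ij}^m x_i\bigr)^2$.
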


\section{The construction of ``Good'' cut-off functions}
    In this section we construct the cut-off functions which are need in proving the main theorems of this paper. 
Let $(M,\mu)$ be a sub-Riemannian manifold and let $L$ be a sub-Laplacian on $M$. Suppose  that  $(M,\mu)$  satisfies \CD \ with respect to $L$ and  a bilinear form $\Gamma^Z$, for some $\rho_1\in\mathbb R$, $\rho_2\gs 0$, $\kappa\gs0$ and $d>0$.  We also assume that  the  Hypothesis (\ref{a1}-\ref{a4}) hold for $L$, $\Gamma^Z$ and  $(P_t)_{t\gs 0}$, which is the heat semigroup associated to $L$.

Let us begin from a distance comparison.    By using $\Gamma^Z$, a family of control distance $d_\tau$, $\tau\gs 0$, were introduced  in \cite{BBGM2014}. Given $x,y\in M$ and $\tau\gs0$, let
 $S_\tau(x,y)$ be the set of all curves $\gamma:[0,T]\to M$ jointed $x$ and $y$ such that  $|\frac{d}{dt}f\circ \gamma(t)|^2\ls (\Gamma(f)+\tau^2\Gamma^Z(f))(\gamma(t))$
for every smooth function $f$ on $M$.   It is clear that $S(x,y)\subset S_\tau(x,y)$ for any $\tau>0$. Define the distance
 $$d_\tau(x,y):=\inf_{\gamma\in S_\tau(x,y)} \int_0^T\ip{\gamma'(t)}{\gamma'(t)}_{g\oplus \tau^2g_V}dt.$$
In particular, when $\tau=1$, $d_1$ is the Riemannian distance induced by $g_R$ on $M$.  Note that, for all $\tau>0$,
$$  d_\tau(x,y)\ls d_0(x,y)=d_L(x,y),\qquad \forall\ x,y\in M.$$  

%We need also the following basic consequences of the generalized curvature-dimension inequality, see \cite{BBGM2014, BBG2014}.
      \begin{proposition}[Baudoin el. \cite{BBGM2014}]\label{prop3.1}
        Let $(M,\mu)$ be a sub-Riemannian manifold satisfying Hypothesis (\ref{a1}-\ref{a4}), and $L$ be a sub-Laplacian on $M$.
        Assume that $M$ satisfies CD$(\rho_1,\rho_2,\kappa,d)$ for some $\rho_1\in \mathbb R$, $\rho_2>0$, $\kappa\gs 0$ and $0<d\ls \infty$.
        Then for any $\tau\gs0$, there exists a constant $C(\tau)>0$, depending only on $\rho_1,\rho_2,\kappa,d$, for which one have:
        $$d_L(x,y)\ls C(\tau)\cdot\max\big\{d_\tau(x,y), \sqrt{d_\tau(x,y)}\big\},\quad \forall\ x,y\in M.  $$
        Moreover, if $\rho_1\gs0$,  then one have  for any $\tau>0$ that
\begin{equation}\label{equ3.1}
 d_L(x,y)\ls C_{\rho_2,\kappa,d}\big(\tau+d_\tau(x,y)\big),\qquad \forall \ x,y\in M,
\end{equation}
where $C_{\rho_2,\kappa,d} $ is a  positive constant depending only on $\rho_2,\kappa$ and $d$.         \end{proposition}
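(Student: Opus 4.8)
The plan is to exploit the dual, variational description of both distances together with the heat-semigroup gradient bounds that the curvature-dimension inequality provides. By the definition (1.3) one has $d_L(x,y)=\sup\{f(x)-f(y):\|\Gamma(f)\|_\infty\ls 1\}$, and the same argument gives the analogous characterization $d_\tau(x,y)=\sup\{f(x)-f(y):\|\Gamma(f)+\tau^2\Gamma^Z(f)\|_\infty\ls 1\}$ for the control distance. Thus it suffices to estimate $f(x)-f(y)$ from above in terms of $d_\tau$ for an arbitrary $f$ with $\|\Gamma(f)\|_\infty\ls 1$ (equivalently, $f$ is $1$-Lipschitz for $d_L$). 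Since such an $f$ may have an uncontrolled vertical gradient $\Gamma^Z(f)$, it need not be Lipschitz for $d_\tau$; the idea is to replace $f$ by its heat regularization $P_sf$, which \emph{does} have controlled vertical gradient, and then to pay for the error $|P_sf-f|$.

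Concretely, first I would invoke the gradient estimates of Baudoin--Garofalo \cite{BG2017} that follow from \CD: for $\rho_1\gs 0$ there are constants $C_1,C_2$ depending only on $\rho_2,\kappa,d$ with $\Gamma(P_sf)\ls C_1\,P_s(\Gamma f)$ and $\Gamma^Z(P_sf)\ls \frac{C_2}{s}P_s(\Gamma f)$ for all $s>0$ (with an extra factor $e^{Cs}$ when $\rho_1<0$, which forces $s$ to stay bounded). For $f$ with $\|\Gamma f\|_\infty\ls 1$ these give $\|\Gamma(P_sf)+\tau^2\Gamma^Z(P_sf)\|_\infty\ls C_1+C_2\tau^2/s$, so $P_sf$ rescaled by $(C_1+C_2\tau^2/s)^{-1/2}$ is admissible in the variational formula for $d_\tau$, whence $P_sf(x)-P_sf(y)\ls\sqrt{C_1+C_2\tau^2/s}\;d_\tau(x,y)$. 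The second ingredient is the concentration estimate $|P_sf(x)-f(x)|\ls C\sqrt{s}$, valid because $f$ is $1$-Lipschitz for $d_L$ and the heat kernel satisfies Gaussian upper bounds at scale $\sqrt{s}$ in the metric $d_L$ --- themselves a consequence of \CD\ via volume doubling and Li--Yau estimates (global for $\rho_1\gs 0$, local otherwise). Writing $f(x)-f(y)=[f-P_sf](x)+[P_sf(x)-P_sf(y)]+[P_sf-f](y)$ and taking the supremum over admissible $f$ yields, for every $s>0$,
\[
 d_L(x,y)\ls 2C\sqrt{s}+\sqrt{C_1+C_2\tau^2/s}\;d_\tau(x,y).
\]

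It then remains to optimize over $s$. When $\rho_1\gs 0$ the estimate holds for all $s>0$, so choosing $s=\tau^2$ collapses the prefactor $\sqrt{C_1+C_2\tau^2/s}$ to a constant depending only on $\rho_2,\kappa,d$ and produces the linear bound $d_L(x,y)\ls C_{\rho_2,\kappa,d}(\tau+d_\tau(x,y))$, which is (\ref{equ3.1}). For general $\rho_1$ the gradient and heat-kernel estimates are only available for bounded $s$, and a short case analysis (balancing $\sqrt{s}$ against $\tau^2 d_\tau^2/s$) shows that the minimizing choice is $s\sim\tau\,d_\tau$ when $d_\tau\ls\tau$ and $s\sim\tau^2$ otherwise; this gives $d_L\ls C(\tau)\max\{d_\tau,\sqrt{d_\tau}\}$, with the $\sqrt{d_\tau}$ term governing the small-scale regime and $d_\tau$ the large-scale one.

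I expect the main obstacle to be the two analytic inputs rather than the bookkeeping: namely the $1/s$ decay of the vertical gradient $\Gamma^Z(P_sf)$ under the horizontal energy $P_s(\Gamma f)$ --- the genuinely sub-Riemannian smoothing phenomenon, and the crux of why the weight $\tau^2$ can be absorbed at the cost of a time $s\sim\tau^2$ --- and the Gaussian concentration $|P_sf-f|\ls C\sqrt s$ for $d_L$-Lipschitz $f$. Both are established from \CD\ in \cite{BG2017, BBGM2014}; granting them, the comparison reduces to the elementary optimization above. Tracking the dependence of the constants on $\rho_1$ (and hence the restriction to bounded $s$ when $\rho_1<0$) is exactly what separates the clean bound (\ref{equ3.1}) from the $\max\{d_\tau,\sqrt{d_\tau}\}$ bound in the general case.
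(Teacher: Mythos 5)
Your argument is correct and follows essentially the same route as the paper, which simply cites the intermediate inequality $d_L^2\ls A_1t+A_2d_\tau^2+A_3\tau^2d_\tau^2/t$ from \cite{BBGM2014} (the squared form of the bound you derive from the semigroup gradient estimates and the concentration bound $|P_sf-f|\ls C\sqrt{s}$) and then optimizes in $t$. The only cosmetic difference is the choice of the regularization time: the paper takes $t=\tau d_\tau(x,y)$ while you take $s=\tau^2$; both yield $d_L\ls C_{\rho_2,\kappa,d}(\tau+d_\tau)$ when $\rho_1\gs0$.
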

\begin{proof}
The first assertion comes from \cite[Theorem 1.2]{BBGM2014}.

Let us consider  the second assertion (\ref{equ3.1}). From \cite[Line 13 on the page 2024]{BBGM2014}, by taking $\rho_1=0$ there, one get that 
$$d^2_L(x,y)\ls A_1t+A_2\cdot d^2_\tau(x,y)+A_3\frac{\tau^2d^2_\tau(x,y)}{t},\qquad x,y\in M $$
for all $t>0$, where $A_i, 1\ls i\ls 3$ are three positive constants depending only on $d,\kappa$ and $\rho_2.$ By choosing $t=\tau d_\tau(x,y)$, this yields 
\begin{equation*}
\begin{split}
d^2_L(x,y)&\ls \big(A_1+A_2+A_3\big)\left(\tau d_\tau(x,y)+ d^2_\tau(x,y)\right)\\
&\ls  \big(A_1+A_2+A_3\big)\big(\tau+ d_\tau(x,y)\big)^2,\qquad x,y\in M.
\end{split}
\end{equation*}
Let $C_{\rho_2,\kappa,d}=\sqrt{A_1+A_2+A_3}$ and the second inequality follows. The proof is finished. 
\end{proof}

 With the help of Proposition \ref{prop3.1}, we can improve  the Hypothesis {\ref{a1} to the following quantitative one.

    \begin{lemma}  \label{lem3.2}
        Assume that $(M,\mu)$ satisfies CD$(0,\rho_2,\kappa,d)$ for some  $\rho_2>0$, $\kappa\gs 0$ and $0<d\ls \infty$, and the Hypothesis {\rm (1-4)}.
    Then there exists a constant $\gamma:=\gamma(\rho_2,\kappa,d)>1$ such that:   for any $x_0\in M$ and any $R\gs 1$, there exists a cut-off function $\psi_0\in C_0^\infty(M)$ such that
        \begin{itemize}
            \item $0\ls\psi_0\ls 1$ on $M$, $\psi_0\equiv 1$ on $B_R(x_0)$ and $\mathrm{supp}\ \psi_0\subset B_{\gamma R}(x_0)$,
            \item $\|\Gamma(\psi_0)+\left(\frac{R}{2}\right)^2\cdot\Gamma^Z(\psi_0)\|_{\infty} \ls \frac{C_0}{R^2}$,
          %  \item $\|L\varphi_R\|_{\infty}\ls\frac{C}{R^2}$,
        \end{itemize}
        where $C_0$ is a universal  constant,  independent of $R$ and $x_0$.
    \end{lemma}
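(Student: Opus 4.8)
The plan is to obtain $\psi_0$ by composing a one–dimensional cut-off with a \emph{smoothed} control distance, choosing the control parameter $\tau$ equal to $R/2$ so that the weight $(R/2)^2$ in front of $\Gamma^Z$ is exactly the one built into $d_\tau$. Fix $x_0$ and $R\gs 1$, put $\tau=R/2$, and let $r(x)=d_\tau(x_0,x)$. By the triangle inequality $r$ is $1$-Lipschitz for $d_\tau$. Since the form $\Gamma+\tau^2\Gamma^Z$ is non-degenerate on $T^*M$ (its horizontal and vertical parts together span the whole cotangent space), $d_\tau$ is the distance of a genuine Riemannian metric, which is complete because its cometric $\Gamma+\tau^2\Gamma^Z$ is uniformly comparable, for each fixed $\tau$, to the cometric $\Gamma+\Gamma^Z$ of the complete metric $g_M$. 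For smooth $f$ the gradient norm in this metric is $\Gamma(f)+\tau^2\Gamma^Z(f)$, so the Lipschitz bound on $r$ reads as the eikonal-type inequality $\Gamma(r)+\tau^2\Gamma^Z(r)\ls 1$ almost everywhere.

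The function $r$ is only Lipschitz, and producing a genuinely smooth cut-off with a \emph{universal} gradient bound is the main obstacle. I would overcome it with the standard smooth approximation of Lipschitz functions on a complete Riemannian manifold: for the metric underlying $d_\tau$ there is $\tilde r\in C^\infty(M)$ with $\|\tilde r-r\|_\infty\ls 1$ and Lipschitz constant at most $2$, that is $\Gamma(\tilde r)+\tau^2\Gamma^Z(\tilde r)\ls 4$ everywhere. The delicate point is that the underlying metric depends on $R$ through $\tau=R/2$; the key is that this approximation degrades the Lipschitz constant only by a universal amount, so the resulting bound $4$, and hence the final $C_0$, is independent of $R$ and $x_0$.

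With $\tilde r$ in hand I would fix once and for all a smooth $\phi:\R\to[0,1]$ with $\phi\equiv 1$ on $(-\infty,2]$, $\phi\equiv 0$ on $[3,\infty)$ and $|\phi'|\ls 2$, and set $\psi_0=\phi(\tilde r/R)$. The two inclusions follow from Proposition \ref{prop3.1} together with $d_\tau\ls d_L$ and $R\gs 1$: on $B_R(x_0)$ one has $r\ls d_L\ls R$, hence $\tilde r\ls 2R$ and $\psi_0\equiv 1$; and on $\{\tilde r\ls 3R\}$ one has $r\ls 4R$, so (\ref{equ3.1}) gives $d_L\ls C_{\rho_2,\kappa,d}(\tfrac R2+4R)$, which fixes $\gamma=\tfrac92 C_{\rho_2,\kappa,d}$ (enlarged if necessary to exceed $1$) and yields $\mathrm{supp}\,\psi_0\subset B_{\gamma R}(x_0)$. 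Because $(M,d_L)$ is a complete, locally compact length space its closed balls are compact, so indeed $\psi_0\in C_0^\infty(M)$. Finally, the diffusion chain rule for $\Gamma$ together with the contraction $\Gamma^Z(\rho\circ\varphi)\ls(\rho'(\varphi))^2\Gamma^Z(\varphi)$ gives
$$\Gamma(\psi_0)+\Big(\tfrac R2\Big)^2\Gamma^Z(\psi_0)\ls(\phi'(\tilde r/R))^2\,\frac{1}{R^2}\big(\Gamma(\tilde r)+\tau^2\Gamma^Z(\tilde r)\big)\ls\frac{16}{R^2},$$
so that $C_0=16$ works and the construction is complete.
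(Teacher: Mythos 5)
Your proposal is correct and follows essentially the same route as the paper: take $\tau=R/2$, use $d_\tau\ls d_L$ together with Proposition \ref{prop3.1} to get the two ball inclusions, and build a cut-off adapted to $d_\tau$ whose gradient is bounded by a universal constant in the cometric $\Gamma+\tau^2\Gamma^Z$. The only difference is that the paper simply invokes the existence of such a cut-off from the completeness of $(M,d_\tau)$, whereas you make it explicit by smoothing the Lipschitz distance function with a universal loss in the Lipschitz constant — a welcome clarification, not a deviation.
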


    \begin{proof}
        Fix any  $\tau>0$ and let $d_\tau$ be the distance reduced by $\Gamma +\tau^2\Gamma^Z$.
       Let $B_R(x_0)$ and $B^\tau_R(x_0)$ be the balls with radius $R$ under the metric $d_L$ and $d_\tau$ respectively. 
        Since for any $x, x_0\in M$, from $d_L(x,x_0)\gs d_\tau(x,x_0)$, we get
        $$B_R(x_0)\subset B^\tau_R(x_0).$$ 
        By Proposition \ref{prop3.1}, for $x\in M$ such that $d_\tau(x,x_0)\ls 2R$, there exists a constant $C_{\rho_2,\kappa,d}>0$ such that  
        $$d_L(x,x_0)\ls C_{\rho_2,\kappa,d}(\tau+d_\tau(x,y)) \ls \gamma\cdot R$$
        provided $\tau\ls R$ and where $\gamma:=3C_{\rho_2,\kappa,d},$ and hence we get that
        $B^\tau_{2R}(x_0)\subset B_{\gamma R}(x_0)$  when $\tau\ls R$.
 
        Take $\tau=\frac{R}{2}$ and since $(M,d_\tau)$ is complete, there exists a  cut-off function $\psi\in C_0^\infty(M)$ such that 
        \begin{itemize}
            \item $0\ls\psi\ls R$ on $M$, $\psi\equiv R$ on $B^\tau_R(x_0)$ and $\mathrm{supp}\ \psi\subset B^\tau_{2R}(x_0)$,
            \item $\Gamma(\psi)+\tau^2\Gamma^Z(\psi) \ls C_0$,
        \end{itemize}
     where $C_0>0$ is a universal constant.   Then by the fact that $B_R(x_0)\subset B^\tau_R(x_0)$ and $B^\tau_{2R}(x_0)\subset B_{\gamma R}(x_0)$, we have
        \begin{itemize}
            \item $0\ls\psi\ls R$ on $M$, $\psi\equiv R$ on $B_R(x_0)$ and $\mathrm{supp}\psi\subset B_{\gamma R}(x_0)$,
            \item $\Gamma(\psi)+\tau^2\Gamma^Z(\psi)\ls C_0(n) $.
        \end{itemize}
        Letting   $\psi_0=\frac{\psi}{R}$ and recalling $\tau=R/2$, the desired estimates follows, and the proof is finished.
    \end{proof}

The same argument gives the following cut-off functions in the case $\rho_1<0$. 
    \begin{remark}  \label{rem3.3}
        Assume that $(M,\mu)$ satisfies CD$(\rho_1,\rho_2,\kappa,d)$ for some  $\rho_1<0$, $\rho_2>0$, $\kappa\gs 0$ and $0<d\ls \infty$, and the Hypothesis {\rm (1-4)}.
    Then there exists a constant $\gamma:=\gamma(\rho_1,\rho_2,\kappa,d)>1$ such that:   for any $x_0\in M$ and any $R\gs 1$, there exists a cut-off function $\psi_0\in C_0^\infty(M)$ with
        \begin{itemize}
            \item $0\ls\psi_0\ls 1$ on $M$, $\psi_0\equiv 1$ on $B_R(x_0)$ and $\mathrm{supp}\ \psi_0\subset B_{\gamma R}(x_0)$,
            \item $\|\Gamma(\psi_0)+\Gamma^Z(\psi_0)\|_{\infty} \ls \frac{C_0}{R^2}$,
          %  \item $\|L\varphi_R\|_{\infty}\ls\frac{C}{R^2}$,
        \end{itemize}
        where $C_0$ is a universal  constant,  independent of $R$ and $x_0$.
    \end{remark}

   We also need the following  technical lemma  (c.f. \cite[Corollary 4.6]{BG2017}):
    \begin{lemma} \label{lem3.4}
        Assume that $(M,\mu)$ satisfies $\mathrm{CD}(\rho_1,\rho_2,\kappa,d)$ for some $\rho_1\in\mathbb R$, $\rho_2>0$, $\kappa\gs 0$ 
and $0<d\ls \infty$, and the Hypothesis {\rm (1-4)}.
    Then for all $f\in C_0^\infty(M)$, we have
        $$\Gamma(P_tf)+\nu\cdot\Gamma^Z(P_tf)+\frac{e^{-2At}-1}{-Ad}(L(P_tf))^2\ls P_t\left(\Gamma(f)+\nu\cdot\Gamma^Z(f)\right)e^{-2At},$$
        for any $\nu>0$, where  $A=A(\nu):=\min\{\rho_1-\kappa/\nu,\rho_2/\nu\}$.
        
        Here and in the sequel, we take $\frac{e^{-2At}-1}{-Ad}= \frac{2t}{d}$ when $A=0.$
            \end{lemma}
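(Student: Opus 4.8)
The plan is to run the classical Bakry--Émery interpolation along the heat flow, applying the generalized curvature-dimension inequality of Definition \ref{defn1} at the level of the generator and then integrating an ordinary differential inequality. Fix $t>0$, $f\in C_0^\infty(M)$ and $\nu>0$, and for $s\in[0,t]$ set
\[
\Phi(s):=P_s\Big(\Gamma(P_{t-s}f)+\nu\,\Gamma^Z(P_{t-s}f)\Big).
\]
The point of working with $\Phi$ is that it interpolates between $\Phi(0)=\Gamma(P_tf)+\nu\Gamma^Z(P_tf)$ and $\Phi(t)=P_t\big(\Gamma(f)+\nu\Gamma^Z(f)\big)$, which are precisely the two sides of the desired inequality. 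Hypotheses (H\ref{a1})--(H\ref{a4}), together with the heat-kernel regularity recalled in Section \ref{Pre} and established in \cite{BG2017}, guarantee that $P_{t-s}f$ is smooth with $\Gamma$ and $\Gamma^Z$ uniformly bounded on $[0,t]$, so that $\Phi$ is well defined and differentiable and the manipulations below are legitimate.

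The next step is to differentiate $\Phi$. Writing $g:=P_{t-s}f$ and using $\partial_s g=-Lg$ together with $\tfrac{d}{ds}\Gamma(g)=-2\Gamma(g,Lg)$, $\tfrac{d}{ds}\Gamma^Z(g)=-2\Gamma^Z(g,Lg)$ and the commutation of $L$ with $P_s$, one obtains
\[
\Phi'(s)=P_s\Big(L\Gamma(g)-2\Gamma(g,Lg)+\nu\big(L\Gamma^Z(g)-2\Gamma^Z(g,Lg)\big)\Big)=2P_s\big(\Gamma_2(g)+\nu\,\Gamma_2^Z(g)\big),
\]
where I used exactly the definitions of $\Gamma_2$ and $\Gamma_2^Z$ from Definition \ref{defn1}. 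Applying CD$(\rho_1,\rho_2,\kappa,d)$ to $g$ with the same parameter $\nu$ gives
\[
\Phi'(s)\gs 2P_s\Big(\tfrac1d(Lg)^2+(\rho_1-\tfrac{\kappa}{\nu})\Gamma(g)+\rho_2\Gamma^Z(g)\Big).
\]
Since $A=\min\{\rho_1-\kappa/\nu,\ \rho_2/\nu\}$ one has $\rho_1-\kappa/\nu\gs A$ and $\rho_2\gs A\nu$, and because $\Gamma(g),\Gamma^Z(g)\gs 0$ the two curvature terms may be replaced from below by $A\Gamma(g)+A\nu\Gamma^Z(g)$, yielding $\Phi'(s)\gs \tfrac2d P_s\big((Lg)^2\big)+2A\Phi(s)$.

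It remains to bound $P_s((Lg)^2)$ from below and integrate. The Cauchy--Schwarz (Jensen) inequality for the sub-Markov semigroup gives $P_s((Lg)^2)\gs (P_s(Lg))^2$, while the commutation $P_sLP_{t-s}=LP_t$ shows that $P_s(Lg)=LP_tf$ is independent of $s$; hence $P_s((Lg)^2)\gs (LP_tf)^2$. Therefore $\tfrac{d}{ds}\big(e^{-2As}\Phi(s)\big)\gs \tfrac2d e^{-2As}(LP_tf)^2$, and integrating over $[0,t]$ with $\int_0^t e^{-2As}\,ds=\tfrac{1-e^{-2At}}{2A}$ gives, after inserting the values of $\Phi(0)$ and $\Phi(t)$ and rearranging, exactly
\[
\Gamma(P_tf)+\nu\,\Gamma^Z(P_tf)+\frac{e^{-2At}-1}{-Ad}(LP_tf)^2\ls e^{-2At}P_t\big(\Gamma(f)+\nu\Gamma^Z(f)\big),
\]
with the case $A=0$ obtained by letting $A\to0$ so that $\tfrac{1-e^{-2At}}{2A}\to t$ and the coefficient becomes $\tfrac{2t}{d}$. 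The only genuine difficulty is analytic rather than algebraic: justifying the differentiation of $\Phi$, the interchange of $P_s$ with $L$ and with the $s$-derivative, and the contraction inequality under mere completeness of $(M,d_L)$. This is precisely what Hypotheses (H\ref{a1})--(H\ref{a4}) and the semigroup results of \cite{BG2017} are designed to supply, so I would invoke them at this point rather than reprove them.
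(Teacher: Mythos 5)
Your proposal is correct and follows essentially the same route as the paper: the same interpolation functional $\Phi(s)=P_s\big(\Gamma(P_{t-s}f)+\nu\Gamma^Z(P_{t-s}f)\big)$, the same application of CD$(\rho_1,\rho_2,\kappa,d)$ with $A=\min\{\rho_1-\kappa/\nu,\rho_2/\nu\}$, the same Jensen-type bound $P_s((Lg)^2)\gs (LP_tf)^2$, and the same integration of $\big(e^{-2As}\Phi(s)\big)'$. The paper likewise defers the analytic justifications to \cite{BG2017}, so nothing is missing.
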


    \begin{proof}
        Set $\Psi(s)=P_s\left(\Gamma(P_{t-s}f)+\nu\cdot\Gamma^Z(P_{t-s}f)\right)$, as in \cite{BG2017}.
        
        Differentiating $\Psi(s)$ and by noting that $M$ satisfies $CD(\rho_1, \rho_2, \kappa, d)$ with respect to $L$ and $\Gamma^Z$, we have for all $\nu>0$ that
        \begin{equation*} %\label{001}
            \begin{aligned}
                \Psi'(s)&=2P_s\left(\Gamma_2(P_{t-s}f)+\nu\Gamma_2^Z(P_{t-s}f)\right) \\
                &\gs 2P_s\left(\frac{1}{d}(L(P_{t-s}f))^2+(\rho_1-\kappa/\nu)\Gamma(P_{t-s}f)+\rho_2\Gamma^Z(P_{t-s}f)\right) \\
                  &\gs 2P_s\left(\frac{1}{d}(L(P_{t-s}f))^2+A\left(\Gamma(P_{t-s}f)+\nu\Gamma^Z(P_{t-s}f)\right)\right) \\
                &\gs \frac{2}{d}(L(P_tf))^2+2A\Psi(s),
            \end{aligned}
        \end{equation*}
        where we have used $P_s[(L(P_{t-s}f))^2]\gs [P_s (L(P_{t-s}f))]^2=(L(P_tf))^2$.     This yields 
                \begin{equation*}
            \begin{aligned}
                \left(e^{-2As}\Psi(s)\right)'&=-2Ae^{-2As}\Psi(s)+e^{-2As}\Psi'(s) \\
                &\gs \frac{2}{d}e^{-2As}(L(P_tf))^2,
            \end{aligned}
        \end{equation*}
      The  integration of $\left(e^{-2As}\Psi(s)\right)'$ from 0 to $t$ leads to 
        $$e^{-2At}\Psi(t)-\Psi(0)\gs \frac{2}{d}(L(P_tf))^2 \int_0^te^{-2As}\ \mathrm{d}s=\frac{1-e^{-2At}}{Ad}(L(P_tf))^2.$$
       This is the desired estimate, and the proof is finished.
    \end{proof}

    The following lemma is the crucial part of this paper. The proof is inspired by \cite{MN}, in which the existence of cut-off functions
    with gradient and Laplacian estimates is proved on RCD$^*(K,N)$-spaces. With the condition \CD, one can generalize it
    to sub-Riemannian manifolds.

    \begin{lemma}  \label{lem3.5}
        Assume that $(M,\mu)$ satisfies CD$(0,\rho_2,\kappa,d)$ for some  $\rho_2>0$, $\kappa\gs 0$ and $0<d\ls \infty$, and the Hypothesis {\rm (1-4)}.
        Then there exist two constants $\gamma=\gamma(\rho_2,\kappa,d)>0$ and $C_2=C_2(\rho_2,\kappa,d)>0$ such that: for any $x_0\in M$ and  $R\gs 1$, there exists a cut-off function $\varphi_R\in C_0^\infty(M)$ with
                \begin{itemize}
            \item $0\ls\varphi_R\ls 1$ on $M$, $\varphi_R\equiv 1$ on $B_R(x_0)$ and $\mathrm{supp}\ \varphi_R\subset B_{\gamma R}(x_0)$,
            \item $\|\Gamma(\varphi_R)+\left(\frac{R}{2}\right)^2\Gamma^Z(\varphi_R)\|_{L^\infty} \ls \frac{C_2}{R^2}$,
            \item $\|L\varphi_R\|_{L^\infty}\ls\frac{C_2}{R^2}. $
                                \end{itemize}
    \end{lemma}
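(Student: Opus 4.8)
The plan is to upgrade the cut-off function $\psi_0$ from Lemma \ref{lem3.2} (which controls $\Gamma(\psi_0)+(R/2)^2\Gamma^Z(\psi_0)$ but says nothing about $L\psi_0$) into a function $\varphi_R$ that additionally has a bounded sub-Laplacian, and the mechanism for the extra regularity is to run the heat flow for a short time. Concretely, I would set
\begin{equation*}
\varphi_R := P_{s} \psi_0
\end{equation*}
for a suitably chosen time $s$ comparable to $R^2$, where $\psi_0$ is the function produced by Lemma \ref{lem3.2} with its own dilation constant $\gamma_0$ and applied at a radius slightly larger than $R$ (so that after a short-time perturbation the plateau $\varphi_R\equiv 1$ on $B_R(x_0)$ survives). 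The heat semigroup is the natural smoothing device here: because $P_t 1=1$ by Hypothesis (H\ref{a4}) and $P_t$ is sub-Markov, one automatically retains $0\ls \varphi_R\ls 1$, and the gradient-type bounds are governed precisely by Lemma \ref{lem3.4}.

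The main steps, in order, are as follows. First, apply Lemma \ref{lem3.4} with the choice $\nu=(R/2)^2$ to $f=\psi_0$; since we are in the $\mathrm{CD}(0,\rho_2,\kappa,d)$ regime we have $\rho_1=0$, so $A=A(\nu)=\min\{-\kappa/\nu,\rho_2/\nu\}=-\kappa/\nu<0$, and the lemma gives
\begin{equation*}
\Gamma(P_s\psi_0)+\tfrac{R^2}{4}\Gamma^Z(P_s\psi_0)\ls e^{-2As}\,P_s\!\left(\Gamma(\psi_0)+\tfrac{R^2}{4}\Gamma^Z(\psi_0)\right)\ls e^{-2As}\cdot\frac{C_0}{R^2},
\end{equation*}
using the gradient bound of Lemma \ref{lem3.2} together with $P_s$ being a contraction on $L^\infty$. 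The key quantitative point is that $As=-\kappa s/\nu=-4\kappa s/R^2$, so choosing $s=\theta R^2$ for a fixed small constant $\theta=\theta(\rho_2,\kappa,d)$ makes $e^{-2As}=e^{8\kappa\theta}$ a harmless dimensional constant, delivering the second bullet. Second, for the $L\varphi_R$ bound I would use the term of Lemma \ref{lem3.4} that is discarded above: rearranging that inequality isolates $(L(P_s\psi_0))^2$, giving
\begin{equation*}
\frac{e^{-2As}-1}{-Ad}\,(L\varphi_R)^2\ls e^{-2As}\cdot\frac{C_0}{R^2},
\end{equation*}
and since $\frac{e^{-2As}-1}{-Ad}$ is comparable to $s/d=\theta R^2/d$ when $|As|$ is bounded (both factors being order-one constants times $R^2$ and $1$ respectively), this yields $\|L\varphi_R\|_\infty^2\ls C/R^4$, hence the third bullet. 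Third, I must verify the geometric properties (first bullet): the containment of the support and the plateau value $1$ on $B_R(x_0)$ are where the heat flow must be shown not to destroy the shape of $\psi_0$.

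The hard part will be exactly this last verification, namely re-establishing $\varphi_R\equiv 1$ on $B_R(x_0)$ and $\mathrm{supp}\,\varphi_R\subset B_{\gamma R}(x_0)$ after applying $P_s$, since heat flow instantly spreads mass and in general destroys both compact support and flat plateaus. The remedy I anticipate is a finite-propagation-speed / Gaussian-tail estimate for the subelliptic heat kernel: for $s\simeq R^2$ the kernel $p(x,y,s)$ is essentially negligible once $d_L(x,y)\gg R$, so starting from $\psi_0$ built at radius $2R$ (say) with support in $B_{2\gamma_0 R}(x_0)$, the function $P_s\psi_0$ is $\gs 1-\epsilon$ on $B_R(x_0)$ and $\ls\epsilon$ outside $B_{\gamma R}(x_0)$ for a larger $\gamma$. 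To convert these near-plateaus into exact ones I would post-compose with a smooth cutoff $\chi:\R\to[0,1]$ with $\chi\equiv1$ near $1$ and $\chi\equiv0$ near $0$, replacing $\varphi_R$ by $\chi\circ(P_s\psi_0)$; by the chain rule $\Gamma(\chi\circ u)=(\chi')^2\Gamma(u)$ and $L(\chi\circ u)=\chi'(u)Lu+\chi''(u)\Gamma(u)$, so the two analytic bounds are preserved up to the constants $\|\chi'\|_\infty,\|\chi''\|_\infty$, while the geometric properties become exact. Controlling the heat-kernel tails quantitatively and uniformly in $R$ and $x_0$ under $\mathrm{CD}(0,\rho_2,\kappa,d)$ — rather than merely qualitatively — is the step requiring the most care, and it is there that the volume-doubling and Gaussian heat-kernel estimates available under the generalized curvature-dimension inequality would be invoked.
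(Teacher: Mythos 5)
Your overall architecture is the same as the paper's: take $\psi_0$ from Lemma \ref{lem3.2}, run the heat flow for a time $s\simeq R^2$, extract the bound on $\Gamma+(R/2)^2\Gamma^Z$ and on $L(P_s\psi_0)$ from the two terms of Lemma \ref{lem3.4} with $\nu=(R/2)^2$ (your computations here match (\ref{equ3.3}) and (\ref{equ3.4}) exactly), and then post-compose with a one-dimensional cutoff to restore an exact plateau and compact support. The one place you diverge is the step you yourself flag as hardest, namely showing that $P_s\psi_0$ stays near $1$ on $B_R(x_0)$ and near $0$ outside $B_{\gamma R}(x_0)$: you propose to invoke Gaussian heat-kernel tail estimates and volume doubling under $\mathrm{CD}(0,\rho_2,\kappa,d)$, and you leave that quantitative, $R$- and $x_0$-uniform verification open. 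The paper closes this gap with a much lighter observation that is already contained in the bound you derived: since $\partial_s\psi_s=L\psi_s$ and (\ref{equ3.4}) gives $|\partial_s\psi_s|\ls \sqrt{C_0d}/(R\sqrt{s})$, which is integrable at $s=0$, one gets
\begin{equation*}
|\psi_t-\psi_0|\ls\int_0^t|\partial_s\psi_s|\,\mathrm{d}s\ls \frac{2\sqrt{C_0d}}{R}\sqrt{t}\ls\frac14
\end{equation*}
for $t\ls cR^2$ (with $c$ depending only on $\rho_2,\kappa,d$), so the plateau and the support are preserved up to an additive $1/4$ with no heat-kernel estimates at all. You already have every ingredient of this inequality in hand; you simply did not integrate it in time. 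Your Gaussian-tail route could in principle be completed using the results of \cite{BBG2014}, but it imports machinery the argument does not need and leaves the uniformity of the constants unverified, so as written the proposal has a genuine (if fillable) gap at precisely that step.
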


    \begin{proof} 

        Let $\psi_0\in C_0^\infty(M)$ be the function given in Lemma \ref{lem3.2}, 
        and $ \psi_t=P_t\psi_0$. By Lemma \ref{lem3.4}, we have
        \begin{equation}\label{equ3.2}
            \begin{aligned}
                \Gamma( \psi_t)+\nu\Gamma^Z(\psi_t)+\frac{e^{-2At}-1}{-Ad}(L\psi_t)^2 &\ls P_t\left(\Gamma(\psi_0)+\nu\Gamma^Z(\psi_0)\right)e^{-2At}, 
            \end{aligned}
        \end{equation}
     for all $\nu>0$,   where $A=\min\{\rho_1-\kappa/\nu,\rho_2/\nu\}=-\kappa/\nu\ls 0$, (from $\rho_1=0$, $\rho_2\gs0$ and $\kappa/\nu\gs0$).
        Using the maximum principle and  Lemma \ref{lem3.2}, taking $\nu=(\frac{R}{2})^2,$ we get for all $t>0$   that
     \begin{equation*}
     \begin{split}
     P_t\left(\Gamma(\psi_0)+\nu\Gamma^Z(\psi_0)\right)\ls \|\Gamma(\psi_0)+\nu\Gamma^Z(\psi_0)\|_{\infty} \ls \frac{C_0}{R^2}.
     \end{split}
     \end{equation*}
 Substituting this into the inequality (\ref{equ3.2}) we have  
                 \begin{equation}\label{equ3.3}
                \Gamma( \psi_t)+\nu\Gamma^Z(\psi_t)  \ls \frac{C_0}{R^2}e^{-2At},\qquad\qquad
                  \end{equation} 
                        \begin{equation}\label{equ3.4}
                        \begin{split}
             (\partial_t\psi_t)^2&  = (L\psi_t)^2\\
              &\ls   \frac{C_0}{R^2}\cdot \frac{-Ad}{e^{-2At}-1} \cdot e^{-2At}= \frac{C_0}{R^2}\cdot\frac{-2At\cdot e^{-2At}}{e^{-2At}-1} \cdot \frac{d}{2t}\\&\ls\frac{C_0d}{R^2t}, 
             \end{split}
                        \end{equation} 
   provided  $-2At\ls1$,    where we have used the elementary inequality 
 $$\frac{\tau e^\tau}{e^\tau-1}\ls \frac{1}{1-e^{-1}}\ls 2,\qquad\forall \tau\in(-\infty,1].$$
  Integrating $\partial_s \psi_s$ from 0 to $t$ in (\ref{equ3.4}) we get
        \begin{equation*}
            \begin{aligned}
                |\psi_t-\psi_0| \ls \int_0^t|\partial_s\psi_s|\ds\ls  \frac{2\sqrt{C_0d}}{R}\cdot\sqrt t,
            \end{aligned}
        \end{equation*}
  provided  $-2At\ls1$.     When $t\ls \frac{R^2}{64 C_0d }$,  we have. $ |\psi_t-\psi_0| \ls  1/4$. Therefore, we get
       \begin{equation*}
            \left\{
            \begin{aligned}
                &\psi_{t}(x) \in [\frac{3}{4},1], \quad \mathrm{for} \ x\in B_R(x_0), \\
                &\psi_{t}(x) \in [0,\frac{1}{4}], \quad \mathrm{for} \ x \notin B_{\gamma R}(x_0),
                            \end{aligned}
            \right.
        \end{equation*}
        provided (by recalling $A=-\kappa/\nu$)
        \begin{equation}\label{equ3.5}
        t\ls t_R:=\min\Big\{\frac{1}{2\kappa/\nu},  \frac{R^2}{64 C_0d }\Big\}=\min\Big\{\frac{R^2}{8\kappa},  \frac{R^2}{64 C_0d }\Big\}:=C_1\cdot R^2.
        \end{equation}
        
                We can choose a function $\rho\in C_0^\infty(\R)$ such that $\rho(s)\equiv 1$ on $[\frac{3}{4},1]$, $\rho(s)\equiv 0$ on $[0,\frac{1}{4}]$ 
        and $|\rho'(s)|+|\rho''(s)|\ls C'$ on $\R$. Letting $\varphi_R:=\rho\circ\psi_{t_R}$, we have
        \begin{itemize}
            \item $0\ls\varphi_R\ls 1$ on $M$, $\varphi_R\equiv 1$ on $B_R(x_0)$ and $\mathrm{supp}\ \varphi_R\subset B_{\gamma R}(x_0)$,  
            \item $\|\Gamma(\varphi_R)+\nu\Gamma^Z(\varphi_R)\|_{L^\infty} \ls \frac{C'C_0}{R^2}e^{(2\kappa/\nu)t_R}\ls \frac{C'C_0e}{R^2},\qquad $ (by (\ref{equ3.3}))
        \end{itemize}
        and since $L$ is a diffusion operator, we get from (\ref{equ3.4}) and (\ref{equ3.5}) that 
       \begin{equation*}
       \begin{split}
        \|L\varphi_R\|_\infty&=\|\rho'(\psi_{t_a})L\psi_{t_a}+\rho''(\psi_{t_a})\Gamma(\psi_{t_R})\|_\infty\\
        &\ls\frac{C'\sqrt{C_0d}}{R\cdot\sqrt{ t_R}}+\frac{C'C_0e}{R^2}:=\frac{C'\sqrt{C_0d}}{R\cdot\sqrt{C_1}R}+\frac{C'C_0e}{R^2}.
        \end{split}
        \end{equation*}
        Thus $\varphi_R=\rho\circ\psi_{t_R}$ is the desired cut-off function.
    \end{proof}

\section{Liouville-type theorems (Proof of Theorem \ref{thm1.2} and Theorem \ref{thm1.3}.)}

    In this section, as above, we assume that $(M,\mu)$ is a sub-Riemannian manifold satisfying the the Hypothesis (\ref{a1}-\ref{a4})  
    and  $\mathrm{CD}(0,\rho_2,\kappa,d)$ with respect the sub-Laplacian $L$ and a bi-linear form $\Gamma^Z$, for some $\rho_2\gs 0$, $\kappa\gs 0$ and $d>0$.
    
     In the sequal we will fix a reference point $x_0$ on $M$.
      Let  $\{\varphi_R\}_{R\gs 1}\subset C_0^\infty$ be the cut-off function given in Lemma \ref{lem3.5}.
    Then for $R \gs 1$ and arbitrary $\alpha>1$, we have ${\rm supp}\phi_R\subset B_{\gamma R}$  for some $\gamma=\gamma_{\rho_2,\kappa,d}>0$ and
    \begin{equation} \label{002}
        |L(\varphi_R)^\alpha|=\left|\alpha(\varphi_R)^{\alpha-1}L\varphi_R+\alpha(\alpha-1)(\varphi_R)^{\alpha-2}\Gamma(\varphi_R)\right|
        \ls C(\alpha) R^{-2} (\varphi_R)^{\alpha-2}.
    \end{equation}

We need also the following result about the volume growth for $B_R(x_0)$.
        \begin{proposition}\label{prop2}
        (1) {\rm (Baudoin el. \cite{BBG2014})}
        Let $(M,\mu)$ be a sub-Riemannian manifold satisfying Hypothesis (\ref{a1}-\ref{a4}), and $L$ be a sub-Laplacian on $M$.
        Assume that $M$ satisfies CD$(0,\rho_2,\kappa,d)$ for some  $\rho_2>0$, $\kappa\gs 0$ and $0<d\ls \infty$.
        Then, there exists a constant $C>0$, depending only on $\rho_2,\kappa,d$, for which one have:
   \begin{equation}\label{power-d}
   \mu(B_R(x_0))\ls \frac{C}{p(x_0,x_0,1)}\cdot R^D \qquad \forall \ R\gs 1,
   \end{equation}
        where $D=\left(1+\frac{3\kappa}{2\rho_2}\right)d$, $p(\cdot,\cdot,t)$ is the heat kernel associated with $L$. 
        
   (2) {\rm (Chang el. \cite[Theorem 1.2 and Remark 1.2(1)]{ChaC2014})}   Let $(M, \theta)$ be a $(2n+1)$-dimensional complete Sasakian manifold with nonnegative Tanaka-Webster Ricci tensor, then 
        \begin{equation}\label{power-dn}
        \mu(B_R(x_0))\ls C_n\cdot R^{2n+3} \qquad \forall \ R\gs 1.
        \end{equation}
              \end{proposition}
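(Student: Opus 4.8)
The plan is to treat the two parts separately. Part (2) is exactly the content of \cite[Theorem 1.2 and Remark 1.2(1)]{ChaC2014}, so there is nothing to prove there beyond quoting it; I would only remark that the bound one obtains from part (1) with the Sasakian parameters $\rho_2=n/2$, $\kappa=1$, $d=2n$ is $D=2n+6$, which is strictly weaker than the sharp CR value $D_n=2n+3$ supplied by Chang et al. The real work is in part (1), which I would extract from \cite{BBG2014} by a standard heat-kernel argument.

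First I would recall the single key input from \cite{BBG2014}: under CD$(0,\rho_2,\kappa,d)$ the Li--Yau inequality holds with the effective dimension $D=(1+\frac{3\kappa}{2\rho_2})d$ (this is where the auxiliary form $\Gamma^Z$ and the optimization over $\nu$, as in Lemma \ref{lem3.4}, produce the dimensional gain), and integrating it along subunit curves yields a scale-invariant parabolic Harnack inequality for the heat kernel,
\begin{equation*}
 p(x_0,y,s)\ls p(x_0,z,t)\left(\frac{t}{s}\right)^{D/2}\exp\left(\frac{C\,d_L(y,z)^2}{t-s}\right),\qquad 0<s<t.
\end{equation*}
Everything else follows from two applications of this inequality together with the sub-Markov property $\int_M p(x_0,y,t)\dmu(y)\ls 1$.

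For the first application I would take $s=1$, $t=R^2$ and $y=z=x_0$ to get the on-diagonal time comparison $p(x_0,x_0,1)\ls p(x_0,x_0,R^2)R^D$, that is,
\begin{equation*}
 p(x_0,x_0,R^2)\gs \frac{p(x_0,x_0,1)}{R^D}.
\end{equation*}
For the second I would work at times $R^2<2R^2$ with source $x_0$ and $z=y$, and note that for $y\in B_R(x_0)$ one has $d_L(x_0,y)<R$, so the exponential factor is bounded and $p(x_0,y,2R^2)\gs c\,p(x_0,x_0,R^2)$ uniformly in $y\in B_R(x_0)$. Integrating this lower bound over $B_R(x_0)$ against the sub-Markov property gives
\begin{equation*}
 1\gs \int_{B_R(x_0)}p(x_0,y,2R^2)\dmu(y)\gs c\,p(x_0,x_0,R^2)\,\mu(B_R(x_0)),
\end{equation*}
whence $\mu(B_R(x_0))\ls \big(c\,p(x_0,x_0,R^2)\big)^{-1}$. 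Combining the two displays yields $\mu(B_R(x_0))\ls C\,p(x_0,x_0,1)^{-1}R^D$, which is (\ref{power-d}).

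The main obstacle is not the heat-kernel bookkeeping above, which is the routine conversion of a parabolic Harnack inequality into polynomial volume growth, but rather the justification that the correct exponent is exactly $D=(1+\frac{3\kappa}{2\rho_2})d$ rather than merely $d$. This sharp effective dimension is precisely the nontrivial output of the Li--Yau estimate in \cite{BBG2014}, and I would quote that estimate rather than reprove it. One also has to dispose of the borderline case $R=1$ (where the bound is trivial) and absorb the harmless constants from the Gaussian factor, but these do not affect the stated polynomial rate.
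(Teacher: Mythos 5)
The paper offers no proof of this proposition at all: both parts are stated as quoted results, with part (1) attributed to \cite{BBG2014} and part (2) to \cite{ChaC2014}. Your reconstruction of part (1) --- deducing $p(x_0,x_0,R^2)\gs p(x_0,x_0,1)R^{-D}$ and $p(x_0,y,2R^2)\gs c\,p(x_0,x_0,R^2)$ for $y\in B_R(x_0)$ from the scale-invariant parabolic Harnack inequality with effective dimension $D=(1+\frac{3\kappa}{2\rho_2})d$, and then integrating against $\int_M p(x_0,y,2R^2)\dmu(y)\ls 1$ --- is correct and is exactly the argument by which the cited reference obtains (\ref{power-d}); the only genuinely nontrivial input is the Harnack inequality with that particular $D$, which you rightly quote rather than reprove. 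Your side remark that the generic value $D=2n+6$ in the Sasakian case is strictly weaker than the sharp $D_n=2n+3$ of part (2) correctly identifies why the paper invokes \cite{ChaC2014} separately for Theorem \ref{thm1.3}.
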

The volume growth power $D$ in (\ref{power-d}) and $2n+3 $ in (\ref{power-dn}) are not  optimal in general, as in the example of the volume growth of a Heisenberg group $\mathbb H^n$. In this case, in fact it is well-known that $\mu(B_R(x_0))\ls C_n R^{2n+2}$.

   Now we begin to prove our nonexistence results in subelliptic, parabolic and hyperbolic cases by the method of text functions.
    The approach of the proofs originally comes from \cite{Poho2009}.   
    For the sake of simplicity, in following proofs, we use $C$ to denote positive constants,  which may vary from line to line.

    \subsection{Subelliptic case}
        For the inequality
        \begin{equation}  \label{003}
            -Lu\gs |u|^p, \quad p>1,
        \end{equation}
        we define the weak solution:

        \begin{definition}
            $u\in L_{loc}^p(M,\mu)$ is said to be a weak solution of (\ref{003}) if 
            \begin{equation} \label{004}
                \int_M|u|^p\varphi\dmu\ls -\int_MuL\varphi\dmu
            \end{equation}
            for every $0\ls \varphi\in C_0^\infty(M)$.
        \end{definition}

        We have following Liouville type theorem.

        \begin{theorem} \label{thm4.3}
            The elliptic inequality {\rm (\ref{003})} admits no nontrivial weak solution provided there exists a number $D'>0$ such that  $1<p\ls \frac{D'}{D'-2}$ and that
            \begin{equation}\label{equ4.4}
            \limsup_{R\to\infty}\frac{\mu(B_R(x_0))}{R^{D'}}<\infty.
            \end{equation}
        \end{theorem}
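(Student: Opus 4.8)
The plan is to use the test-function (capacity) method of \cite{Poho2009}, taking the ``good'' cut-off functions $\varphi_R$ of Lemma \ref{lem3.5} as test functions and arguing by contradiction. Assume $u\not\equiv 0$ is a weak solution and fix the reference point $x_0$. Set $\alpha=\frac{2p}{p-1}=2p'$ (so that $\alpha>2$, where $p'=\frac{p}{p-1}$) and substitute $\varphi=\varphi_R^\alpha$ into the weak formulation (\ref{004}). Writing $A_R:=B_{\gamma R}(x_0)\setminus B_R(x_0)$ and $I_R:=\int_M|u|^p\varphi_R^\alpha\dmu$, and observing that $L(\varphi_R^\alpha)$ vanishes on $B_R(x_0)$ (where $\varphi_R\equiv 1$) and off $B_{\gamma R}(x_0)$, the estimate (\ref{002}) gives
\begin{equation*}
I_R\ls -\int_M u\,L(\varphi_R^\alpha)\dmu=-\int_{A_R}u\,L(\varphi_R^\alpha)\dmu\ls C R^{-2}\int_{A_R}|u|\,\varphi_R^{\alpha-2}\dmu.
\end{equation*}
Here $I_R<\infty$ since $u\in L^p_{loc}$ and $\varphi_R$ has compact support; if $I_R=0$ for all large $R$ we are already done, so we may assume $I_R>0$.

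Next I would split $|u|\varphi_R^{\alpha-2}=(|u|^p\varphi_R^\alpha)^{1/p}\,\varphi_R^{\alpha-2-\alpha/p}$ and apply H\"older with exponents $p$ and $p'$. The choice $\alpha=2p'$ makes the residual exponent $\alpha-2-\alpha/p=\alpha(1-1/p)-2=0$, so $\varphi_R^{(\alpha-2-\alpha/p)p'}\ls 1$ and its integral over $A_R$ is bounded by $\mu(B_{\gamma R}(x_0))$; invoking the volume bound (\ref{equ4.4}), i.e. $\mu(B_{\gamma R}(x_0))\ls C R^{D'}$ for all large $R$, yields
\begin{equation*}
I_R\ls C R^{-2}\Big(\int_{A_R}|u|^p\varphi_R^\alpha\dmu\Big)^{1/p}\big(\mu(B_{\gamma R}(x_0))\big)^{1/p'}\ls C R^{-2+D'/p'}\Big(\int_{A_R}|u|^p\dmu\Big)^{1/p}.
\end{equation*}
The hypothesis $1<p\ls\frac{D'}{D'-2}$ is exactly the statement $D'\ls 2p'$, equivalently $-2+D'/p'\ls 0$. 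In the subcritical case $p<\frac{D'}{D'-2}$ the exponent $-2+D'/p'$ is strictly negative; bounding the first factor by $I_R^{1/p}$ and dividing gives $I_R\ls C R^{\,D'-2p'}\to 0$. Since $\int_{B_R(x_0)}|u|^p\dmu\ls I_R$, letting $R\to\infty$ forces $\int_M|u|^p\dmu=0$, contradicting $u\not\equiv 0$.

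The main obstacle is the critical case $p=\frac{D'}{D'-2}$, where $-2+D'/p'=0$ and the argument above only yields $I_R\ls C$, i.e. $u\in L^p(M,\mu)$. This is precisely where keeping the first H\"older factor localized to the annulus $A_R$ pays off: with $-2+D'/p'=0$ the last display reads $I_R\ls C\big(\int_{A_R}|u|^p\dmu\big)^{1/p}$, and since $u\in L^p(M,\mu)$ and $A_R\subset M\setminus B_R(x_0)$, the absolute continuity of the integral gives $\int_{A_R}|u|^p\dmu\ls\int_{M\setminus B_R(x_0)}|u|^p\dmu\to 0$ as $R\to\infty$. Hence $I_R\to 0$, and as before $\int_M|u|^p\dmu=0$, so $u\equiv 0$. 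This contradiction completes the proof.
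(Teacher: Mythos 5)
Your proposal is correct and follows essentially the same route as the paper: the same test functions $\varphi_R^\alpha$ with $\alpha=\frac{2p}{p-1}$, the same H\"older splitting with the first factor localized to the annulus $B_{\gamma R}(x_0)\setminus B_R(x_0)$, and the same conclusion via uniform boundedness of $\int|u|^p\varphi_R^\alpha\,{\rm d}\mu$ plus the vanishing tail of a convergent integral. Your explicit separation of the subcritical and critical cases is only a presentational difference; the paper's single argument is exactly your critical-case mechanism.
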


        \begin{proof} We denote $B_R(x_0)$ by $B_R$ for simplicity.        
            First we emphasize that $(\varphi_R)^\alpha$, given in Lemma \ref{lem3.5}, is supported in $B_{\gamma R}$ and $L(\varphi_R)^\alpha$ is supported in $B_{\gamma R}\setminus B_R$. 
            Substituting the test function $(\varphi_R)^\alpha$  into (\ref{004}), we have
            \begin{equation} \label{005}
                \begin{aligned}
                    \int_{B_R}|u|^p\dmu&  \ls \int_{B_{\gamma R}}|u|^p(\varphi_R)^\alpha\dmu  
                     \ls \int_{B_{\gamma R}\setminus B_R}|u||L(\varphi_R)^\alpha|\dmu \\
                    & \ls \left(\int_{B_{\gamma R}\setminus B_R}|u|^p(\varphi_R)^\alpha\dmu\right)^\frac{1}{p}\left(\int_{B_{\gamma R}\setminus B_R}\frac{|L(\varphi_R)^\alpha|^{p'}}{(\varphi_R^\alpha)^{p'-1}}\dmu\right)^\frac{1}{p'},
                \end{aligned}
            \end{equation} 
            where in the last inequality we have used H\"older's inequality.
            
            Set $\alpha=\frac{2p}{p-1}$ and apply (\ref{002}). Inequality (\ref{005}) becomes
            \begin{equation} \label{006}
                \begin{aligned}
                    \int_{B_R}|u|^p\dmu& \ls \int_{B_{\gamma R}}|u|^p(\varphi_R)^\alpha\dmu \\
                    &\ls \left(\int_{B_{\gamma R}\setminus B_R}|u|^p(\varphi_R)^\alpha\dmu\right)^\frac{1}{p}R^{-2}(\mu(B_{\gamma R}\setminus B_R))^\frac{p-1}{p}  \\
                    &\ls CR^{-2+\frac{(p-1)D'}{p}}\left(\int_{B_{\gamma R}\setminus B_R}|u|^p(\varphi_R)^\alpha\dmu\right)^\frac{1}{p},
                \end{aligned}
            \end{equation} 
       where we have used the inequality (\ref{equ4.4}).     
            From (\ref{006}), we observe that 
            \begin{equation*}
                \int_{B_{\gamma R}}|u|^p(\varphi_R)^\alpha\dmu \ls CR^{-2+\frac{(p-1)D'}{p}}\left(\int_{B_{\gamma R}}|u|^p(\varphi_R)^\alpha\dmu\right)^\frac{1}{p}
            \end{equation*}
          and remark that  $-2+\frac{(p-1)D'}{p}\ls0$,  which shows that $\int_{B_{\gamma R}}|u|^p\varphi_R^\alpha\dmu$ is uniformly bounded as $R\to \infty$. By the monotone convergence 
            theorem, the right side of inequality (\ref{006}) tends to zero as $R\to \infty$. Then $u\equiv 0$.
        \end{proof}

    \subsection{Parabolic case}
        For the parabolic type inequality:
        \begin{equation} \label{007}
            \left\{
                \begin{aligned}
                    &\partial_t u(t,x)-Lu \gs |u|^p, \quad x\in M,t>0,p>1,    \\
                    &u(0,x)=u_0(x),   \\     
                \end{aligned}
            \right.
        \end{equation}

       we define the weak solution as follows.
        
        \begin{definition}
            A function $u\in L_{loc}^p([0,T)\times M)$ is said to be a weak solution of (\ref{007}) in $[0,T)\times M$ if 
            \begin{equation} \label{008}
                \begin{aligned}
                    \int_0^T \int_M |u(t,x)|^p\varphi(t,x) \dmu(x)\dt &+\int_M u_0(x)\varphi(0,x)\dmu(x) \\
                    &\ls -\int_0^T\int_M u(t,x)(\partial_t+L)\varphi(t,x)\dmu(x)\dt
                \end{aligned}
            \end{equation}
            for any $0\ls\varphi\in C_0^\infty([0,T)\times M)$. If $T=\infty$, we call $u$ a global in time weak solution to (\ref{007}), 
            otherwise we call $u$ a local in time weak solution.  
        \end{definition}

        \begin{theorem} \label{thm4.5}
            Assume that
            \begin{equation} \label{009}
                \liminf_{R\to\infty}\int_{B_R(x_0)}u_0(x)\dmu(x)>0,
            \end{equation}
            Then problem {\rm (\ref{007})} admits no global in time weak solutions provided there exists a constant $D'>0$ such that  $1<p\ls 1+\frac{2}{D'}$ and that {\rm (\ref{equ4.4})} holds.
            
        \end{theorem}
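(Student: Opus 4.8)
The plan is to run the rescaled test-function (capacity) method of Mitidieri--Pohozaev in its parabolic form. Write $B_R$ for $B_R(x_0)$, fix $p'=\frac{p}{p-1}$ and the exponent $\alpha=\frac{2p}{p-1}$ used in the subelliptic case, and pick a time cut-off $\eta\in C_0^\infty([0,\infty))$ with $0\ls\eta\ls1$, $\eta\equiv1$ on $[0,1]$ and $\eta\equiv0$ on $[2,\infty)$; for a scale $S>0$ set $\eta_S(t)=\eta(t/S)$. Using the spatial cut-off $\varphi_R$ from Lemma~\ref{lem3.5}, I would substitute the nonnegative test function $\varphi(t,x)=\eta_S(t)^\alpha\,(\varphi_R(x))^\alpha$ into the weak formulation (\ref{008}). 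Since $\eta_S(0)=1$, the boundary term is $\int_M u_0\,(\varphi_R)^\alpha\dmu$, while $(\partial_t+L)\varphi=(\varphi_R)^\alpha\partial_t(\eta_S^\alpha)+\eta_S^\alpha L((\varphi_R)^\alpha)$ splits the right-hand side into a time-error supported on $\{S\ls t\ls 2S\}$ and a space-error supported in the annulus $B_{\gamma R}\setminus B_R$, where $L((\varphi_R)^\alpha)$ is concentrated.

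I would then bound both errors by H\"older's inequality with exponents $p,p'$, factoring out $(|u|^p\eta_S^\alpha(\varphi_R)^\alpha)^{1/p}$; the choice $\alpha=\frac{2p}{p-1}$ is exactly what makes the residual power of $\varphi_R$ vanish in the space term and keeps the residual power of $\eta_S$ nonnegative in the time term. Combining $|\partial_t(\eta_S^\alpha)|\ls CS^{-1}\eta_S^{\alpha-1}$, the estimate (\ref{002}) in the form $|L((\varphi_R)^\alpha)|\ls CR^{-2}(\varphi_R)^{\alpha-2}$, and the volume bound (\ref{equ4.4}) giving $\mu(B_{\gamma R})\ls CR^{D'}$, the two errors emerge as $C\,S^{-1/p}R^{D'/p'}\mathcal I^{1/p}$ and $C\,S^{1/p'}R^{D'/p'-2}\mathcal I^{1/p}$, where $\mathcal I:=\int_0^\infty\!\int_M|u|^p\eta_S^\alpha(\varphi_R)^\alpha\dmu\dt$. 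The decisive move is the parabolic balance $S=R^2$, under which both errors collapse to $C\,R^{\beta}\mathcal I^{1/p}$ with $\beta:=\frac{D'(p-1)-2}{p}$; the hypothesis $p\ls1+\frac{2}{D'}$ is precisely $\beta\ls0$.

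From $\mathcal I+\int_M u_0(\varphi_R)^\alpha\dmu\ls C\,R^{\beta}\mathcal I^{1/p}$, Young's inequality absorbs $\tfrac12\mathcal I$ and yields $\int_M u_0(\varphi_R)^\alpha\dmu\ls C\,R^{\,D'-\frac{2}{p-1}}$. In the subcritical range $p<1+\frac{2}{D'}$ the exponent is strictly negative, so the right-hand side tends to $0$; since $\varphi_R\equiv1$ on $B_R$, the left-hand side dominates $\int_{B_R}u_0\dmu$ (up to a lower-order annular contribution), and letting $R\to\infty$ contradicts (\ref{009}), forcing $u\equiv0$. I expect the genuine obstacle to be the critical case $p=1+\frac{2}{D'}$, where $\beta=0$ and the coefficient no longer decays. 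There I would first use the same inequality to conclude that $\mathcal I$ is bounded uniformly in $R$ and $S$, whence $\int_0^\infty\!\int_M|u|^p\dmu\dt<\infty$ by monotone convergence; then I would re-run the H\"older estimates while retaining the genuine restricted integrals of $|u|^p$ over $\{S\ls t\ls2S\}$ and over $B_{\gamma R}\setminus B_R$. Being tails of a convergent integral, these vanish as $R\to\infty$, driving both error terms to $0$ and again contradicting (\ref{009}).
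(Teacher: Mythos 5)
Your proposal is correct and follows essentially the same route as the paper: the test function $\tau(t/R^2)^\alpha\varphi_R(x)^\alpha$ with $\alpha=\tfrac{2p}{p-1}$ built from the cut-off of Lemma \ref{lem3.5}, H\"older's inequality on the time-derivative and $L$-terms supported in $[R^2,2R^2]\times B_{\gamma R}$ and $B_{\gamma R}\setminus B_R$, the volume bound (\ref{equ4.4}), and the parabolic balance $S=R^2$ giving the exponent $\tfrac{D'(p-1)-2}{p}\ls 0$. The only cosmetic difference is that you treat the subcritical case by Young's inequality and reserve the tail/monotone-convergence argument for the critical case, whereas the paper runs the tail argument uniformly for all $p\ls 1+\tfrac{2}{D'}$; both are sound.
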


        \begin{proof} 
            We will prove by contradiction. Assume that there exists a global in time weak solution to (\ref{007}).
            
            First, we will construct appropriate test functions.

            Let $\tau\in C_0^\infty(\R)$ be such that $0\ls \tau\ls 1$, $\tau\equiv 1$ on $[0,1]$, supp $\tau\subset [0,2)$, and $|\tau'|+|\tau''|<C$.
            Set $\phi_R(t,x)=\tau(\frac{t}{R^2})\varphi_R(x)$.
            For $\alpha>1$, direct computation shows that 
            $$|\partial_t(\phi_R)^\alpha| \ls CR^{-2}(\phi_R)^{\alpha-1} \ls CR^{-2}(\phi_R)^{\alpha-2},$$
            $$|L(\phi_R)^\alpha| \ls CR^{-2}(\phi_R)^{\alpha-2}.$$

           Substituting the test function    $(\phi_R)^\alpha$ into (\ref{008}), and as in the subelliptic case, let $\alpha=\frac{2p}{p-1}$.
            Notice that
            \begin{equation} \label{supp}
                \begin{aligned}
                    \mathrm{supp}\ (\phi_R)^\alpha &\subset P_{\gamma R}:=[0,2R^2]\times B_{\gamma R}, \\
                    \mathrm{supp}\ \partial_t(\phi_R)^\alpha &\subset \widetilde{P}_{\gamma R}:=[R^2,2R^2]\times B_{\gamma R}, \\
                    \mathrm{supp}\ L(\phi_R)^\alpha &\subset \widehat{P}_{\gamma R}:=[0,R^2]\times (B_{\gamma R}\setminus B_R).
                \end{aligned}
            \end{equation}
            
            Similar to the subelliptic case, we have 
            \begin{equation} \label{010}
                \begin{aligned}
                  &  \quad \iint_{P_{\gamma R}}|u(t,x)|^p\phi_R^\alpha(t,x) \dmu(x)\dt +\int_{B_{\gamma R}} u_0(x)\phi_R^\alpha(0,x)\dmu(x) \\
                    &\ls \iint_{\widetilde{P}_{\gamma R}} |u(t,x)||\partial_t\phi_R^\alpha(t,x)|\dmu(x)\dt  + \iint_{\widehat{P}_{\gamma R}} |u(t,x)||L\phi_R^\alpha(t,x)|\dmu(x)\dt\\
                    &\ls \left(\iint_{\widetilde{P}_{\gamma R}}|u(t,x)|^p\phi_R^\alpha(t,x)\dmu(x)\dt\right)^\frac{1}{p} \left(\iint_{\widetilde{P}_{\gamma R}}\frac{|L\phi_R^\alpha(t,x)|^{p'}}{(\phi_R(t,x))^{\alpha(p'-1)}}\dmu(x)\dt\right)^\frac{1}{p'} \\
                    &\quad \quad + \left(\iint_{\widehat{P}_{\gamma R}}|u(t,x)|^p\phi_R^\alpha(t,x)\dmu(x)\dt\right)^\frac{1}{p} \left(\iint_{\widehat{P}_{\gamma R}}\frac{|\partial_t\phi_R^\alpha(t,x)|^{p'}}{(\phi_R(t,x))^{\alpha(p'-1)}}\dmu(x)\dt\right)^\frac{1}{p'} \\
                    &\ls CR^{-\frac{2}{p}+\frac{(p-1)D'}{p}}\left\{\left(\iint_{\widetilde{P}_{\gamma R}}|u(t,x)|^p\phi_R^\alpha(t,x)\dmu(x)\dt\right)^\frac{1}{p}+\left(\iint_{\widehat{P}_{\gamma R}}|u(t,x)|^p\phi_R^\alpha(t,x)\dmu(x)\dt\right)^\frac{1}{p}\right\}.
                \end{aligned}
            \end{equation}
            This yields that $\iint_{P_{\gamma R}}|u(t,x)|^p\phi_R^\alpha(t,x)\dmu(x)\dt$ is uniformly bounded 
            as $R\to \infty$. By the monotone convergence theorem, 
            the right side of inequality (\ref{010}) tends to zero as $R\to \infty$, which contradicts (\ref{009}).
        \end{proof}

    \subsection{Hyperbolic case}
        For the hyperbolic type inequality
        \begin{equation} \label{011}
            \left\{
                \begin{aligned}
                    &\partial_{tt} u-Lu \gs |u|^p, \quad x\in M,t>0,p>1,    \\
                    &u(0,x)=u_0(x),   \\ 
                    &\partial_t u(0,x)=u_1(x),      
                \end{aligned}
            \right.
        \end{equation}
        we define the weak solution:

        \begin{definition}
            A function $u\in L_{loc}^p([0,T)\times M)$ is said to be a weak solution of (\ref{011}) in $[0,T)\times M$ if 
            \begin{equation} \label{012}
                \begin{aligned}
                    \int_0^T \int_M |u(t,x)|^p\varphi(t,x) \dmu(x)\dt &+\int_M u_1(x)\varphi(0,x)\dmu(x) -\int_Mu_0(x)\partial_t\varphi(0,x)\dmu(x) \\
                    &\ls -\int_0^T\int_M u(t,x)(\partial_{tt}+L)\varphi(t,x)\dmu(x)\dt
                \end{aligned}
            \end{equation}
            for any $0\ls\varphi\in C_0^\infty([0,T)\times M)$.   
        \end{definition}

        \begin{theorem} \label{thm4.7}
            Assume that
            \begin{equation} \label{013}
                \liminf_{R\to\infty}\int_{B_R(x_0)}u_1(x)\dmu(x)>0,
            \end{equation}
            Then hyperbolic inequality {\rm (\ref{011})} admits no global in time weak solution provided there exists a constant $D'>0$ such that $1<p\ls \frac{D'+1}{D'-1}$ and that the inequality {\rm (\ref{equ4.4})} holds.
        \end{theorem}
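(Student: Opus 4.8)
The plan is to run the same test-function (Mitidieri--Pohozaev) scheme used for the parabolic case in Theorem~\ref{thm4.5}, the one essential change being the \emph{time scaling}, which must now match the wave operator $\partial_{tt}-L$ rather than the heat operator. I argue by contradiction, assuming a global weak solution $u$ of (\ref{011}) exists, and I build a family of parabolic-type cut-offs adapted to the hyperbolic scaling.

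First I construct the test functions. Let $\tau\in C_0^\infty(\R)$ with $0\ls\tau\ls1$, $\tau\equiv1$ on $[0,1]$, $\mathrm{supp}\,\tau\subset[0,2)$ and $|\tau'|+|\tau''|\ls C$, and let $\varphi_R$ be the ``good'' cut-off from Lemma~\ref{lem3.5}. In contrast with the parabolic case I dilate time by $R$ instead of $R^2$, setting
$$\phi_R(t,x)=\tau\!\left(\tfrac{t}{R}\right)\varphi_R(x),$$
which is forced by the homogeneity $\partial_{tt}\sim R^{-2}\sim L$. Taking $\alpha=\tfrac{2p}{p-1}=2p'$ and using Lemma~\ref{lem3.5} with the chain rule as in (\ref{002}), I record
$$|\partial_{tt}(\phi_R)^\alpha|\ls CR^{-2}(\phi_R)^{\alpha-2},\qquad |L(\phi_R)^\alpha|\ls CR^{-2}(\phi_R)^{\alpha-2},$$
together with $\mathrm{supp}\,(\phi_R)^\alpha\subset[0,2R]\times B_{\gamma R}$, $\mathrm{supp}\,\partial_{tt}(\phi_R)^\alpha\subset[R,2R]\times B_{\gamma R}$ and $\mathrm{supp}\,L(\phi_R)^\alpha\subset[0,2R]\times(B_{\gamma R}\setminus B_R)$. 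A simplification peculiar to the hyperbolic case is that $\partial_t(\phi_R)^\alpha(0,x)=0$, since $\tau'$ vanishes near $0$; hence the $u_0$-term in the weak formulation (\ref{012}) disappears and only the datum $u_1$ survives.

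Next I insert $(\phi_R)^\alpha$ into (\ref{012}), move the (now sole) initial term to the left, and apply H\"older's inequality with exponents $p,p'$ to each right-hand integral. The choice $\alpha=2p'$ cancels the powers of $\phi_R$ in the denominators, and the volume bound (\ref{equ4.4}) gives $\mu(B_{\gamma R})\ls CR^{D'}$ while the time support has length $\ls 2R$. Writing $S_R:=\mathrm{supp}\,\partial_{tt}(\phi_R)^\alpha\cup\mathrm{supp}\,L(\phi_R)^\alpha$, this yields
$$\iint|u|^p(\phi_R)^\alpha+\int_{B_{\gamma R}}u_1(\phi_R)^\alpha(0,\cdot)\dmu\ls CR^{\theta}\Big(\iint_{S_R}|u|^p(\phi_R)^\alpha\Big)^{\frac1p},\qquad \theta=-2+\tfrac{1+D'}{p'}=\tfrac{(D'-1)p-(D'+1)}{p},$$
so that $\theta\ls0$ precisely when $1<p\ls\frac{D'+1}{D'-1}$, exactly the assumed range. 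Bounding the shell integral over $S_R$ by the full mass $F(R):=\iint|u|^p(\phi_R)^\alpha$ and discarding the initial term (which is eventually positive by (\ref{013})) gives $F(R)\ls CR^{\theta}F(R)^{1/p}$, whence $F(R)$ is uniformly bounded and $\int_0^\infty\!\!\int_M|u|^p\dmu\,\dt<\infty$.

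Finally I extract the contradiction. As the total integral of $|u|^p$ is finite, its masses over the two shells $[R,2R]\times B_{\gamma R}$ and $[0,2R]\times(B_{\gamma R}\setminus B_R)$ tend to $0$ as $R\to\infty$, being tails in $t$ and in $x$ of a convergent integral. Hence the right-hand side above tends to $0$, forcing $\int_{B_{\gamma R}}u_1(\phi_R)^\alpha(0,\cdot)\dmu\to0$ and contradicting (\ref{013}). I expect the main obstacle to be the \emph{critical} exponent $p=\frac{D'+1}{D'-1}$, where $\theta=0$ and no decaying power of $R$ is available: there the argument cannot rest on a favourable scaling factor and must instead use the finiteness of $\int_0^\infty\!\!\int_M|u|^p$ together with the consequent vanishing of the shell masses, exactly as in the subelliptic and parabolic proofs.
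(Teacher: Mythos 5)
Your proposal is correct and follows essentially the same route as the paper: the same cut-off $\phi_R(t,x)=\tau(t/R)\varphi_R(x)$ with hyperbolic time scaling, the same choice $\alpha=\frac{2p}{p-1}$, and the same H\"older estimate producing the factor $R^{-1-\frac1p+\frac{(p-1)D'}{p}}$ (your $\theta$ equals this exponent), followed by the uniform-boundedness and vanishing-tails argument, including at the critical exponent. The only cosmetic difference is your (more accurate) time support $[0,2R]$ for $L(\phi_R)^\alpha$ versus the paper's $[0,R]$, which is immaterial.
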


        \begin{proof}
            Let $\tau$ be as in the proof of Theorem \ref{thm4.5}.
            Letting $\phi_R(t,x)=\tau(\frac{t}{R})\varphi_R(x)$, a similar computation yields 
            \begin{align*}
                |\partial_{tt}(\phi_R(t,x))^\alpha|  &\ls CR^{-2}(\phi_R(t,x))^{\alpha-2}, \\
                |L(\phi_R(t,x))^\alpha| &\ls CR^{-2}(\phi_R(t,x))^{\alpha-2}. 
            \end{align*}
            Clearly, we have
            \begin{align*}
                \mathrm{supp}\ (\phi_R)^\alpha &\subset P_{\gamma R}:=[0,2R]\times B_{\gamma R}, \\
                \mathrm{supp}\ \partial_t(\phi_R)^\alpha &\subset \widetilde{P}_{\gamma R}:=[R,2R]\times B_{\gamma R}, \\
                \mathrm{supp}\ L(\phi_R)^\alpha &\subset \widehat{P}_{\gamma R}:=[0,R]\times (B_{\gamma R}\setminus B_R).
            \end{align*}
            With the same approach as in (\ref{010}), we get
            \begin{align*}
                \quad \iint_{P_{\gamma R}}&|u(t,x)|^p\phi_R^\alpha(t,x) \dmu(x)\dt +\int_{B_{\gamma R}} u_0(x)\phi_R^\alpha(0,x)\dmu(x) \\
                &\ls CR^{-1-\frac{1}{p}+\frac{(p-1)D'}{p}}\left\{\left(\iint_{\widetilde{P}_{\gamma R}}|u(t,x)|^p\phi_R^\alpha(t,x)\dmu(x)\dt\right)^\frac{1}{p}+\left(\iint_{\widehat{P}_{\gamma R}}|u(t,x)|^p\phi_R^\alpha(t,x)\dmu(x)\dt\right)^\frac{1}{p}\right\}.
            \end{align*}
            Then we can prove completely the same as in parabolic case.
        \end{proof}
    Now we are in the place to show the main result.
    \begin{proof}[Proof of Theorem \ref{thm1.2} and Theorem \ref{thm1.3}]  Substituting  Proposition \ref{prop2} into Theorem \ref{thm4.3}, Theorem \ref{thm4.5} and Theorem \ref{thm4.7}, we conclude that both Theorem \ref{thm1.2} and Theorem \ref{thm1.3} hold.
    \end{proof}

\section{Upper bounds of lifespan time (Proof of Theorem \ref{thm1.4}.)} \label{s5}
    For parabolic inequalities with small initial data: 
    \begin{equation} \label{015}
        \left\{
            \begin{aligned}
                &\partial_t u(t,x)-Lu \gs |u|^p, \quad x\in M,t>0,p>1,    \\
                &u(0,x)=\varepsilon u_0(x),  \quad \varepsilon>0 \\     
            \end{aligned}
        \right.
    \end{equation}
    we define the lifespan time of local in time solutions as
    $$T(\varepsilon):=\sup\{T>0\ |\ there\ exists \ a \ weak \ solution \ to \ (\ref{015}) \ in \ [0,T)\times M\}.$$
    We derive a upper bound for $T(\varepsilon)$ in subcritical case.

    \begin{theorem}\label{thm5.1}
        Let $u(t,x):[0,T)\times M\to \R$ be a weak  solution to the parabolic inequality (\ref{015}) with (\ref{009}).
Assume that $1<p< 1+\frac{2}{D}$.
        Then 
        \begin{equation} \label{014}
            T(\varepsilon)\ls C\varepsilon^{-(\frac{1}{p-1}-\frac{D}{2})^{-1}}.
        \end{equation}
    \end{theorem}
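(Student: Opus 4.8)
The plan is to rerun the test-function argument of Theorem~\ref{thm4.5}, but now keeping careful track of the dependence on both $\varepsilon$ and the time horizon $T$, and then to optimize over the scale $R$ of the cut-off. The crucial difference from the nonexistence statement is that the solution lives only on $[0,T)\times M$, so in the test function $\phi_R(t,x)=\tau(t/R^2)\varphi_R(x)$ — with $\varphi_R$ the good cut-off of Lemma~\ref{lem3.5} and $\tau$ as in the proof of Theorem~\ref{thm4.5} — I must require $\mathrm{supp}\,\phi_R\subset[0,T)\times M$, i.e. $2R^2\ls T$. Thus $R$ cannot be sent to infinity; it is capped at $R\sim\sqrt{T}$, and the lifespan bound will come precisely from the tension between this cap and the decay in $R$ of the resulting estimate.

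First I would substitute $(\phi_R)^\alpha$ with $\alpha=\frac{2p}{p-1}$ into the weak formulation (\ref{008}) with $u_0$ replaced by $\varepsilon u_0$, and repeat verbatim the chain of H\"older estimates that produced (\ref{010}). Writing $I_R:=\iint_{P_{\gamma R}}|u|^p\phi_R^\alpha\,\dmu\,\dt$ and invoking the volume bound (\ref{power-d}) with $D'=D$, this yields
\[
I_R+\varepsilon\int_{B_{\gamma R}}u_0(x)\varphi_R^\alpha(x)\,\dmu(x)\ls C\,R^{\frac{(p-1)D-2}{p}}\,I_R^{1/p}.
\]
Next I would apply Young's inequality $CR^{\beta}I_R^{1/p}\ls\frac12 I_R+C'R^{\beta p'}$ with $\beta=\frac{(p-1)D-2}{p}$ and $p'=\frac{p}{p-1}$, noting that $\beta p'=D-\frac{2}{p-1}$, so as to absorb $\frac12 I_R$ into the left-hand side and obtain
\[
\varepsilon\int_{B_{\gamma R}}u_0\varphi_R^\alpha\,\dmu\ls C'\,R^{\,D-\frac{2}{p-1}}.
\]

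The hypothesis (\ref{009}) provides, exactly as in Theorem~\ref{thm4.5}, a constant $c_0>0$ and a radius $R_0\gs1$ for which the initial-data integral on the left is bounded below by $c_0$ once $R\gs R_0$; hence $\varepsilon c_0\ls C'R^{D-\frac{2}{p-1}}$ for every admissible $R\in[R_0,\sqrt{T/2}]$. In the subcritical regime $1<p<1+\frac2D$ the exponent $D-\frac{2}{p-1}$ is \emph{strictly} negative, so the right-hand side is decreasing in $R$, and taking $R$ as large as the time constraint allows, $R=\sqrt{T/2}$, gives $T^{\frac12(D-\frac{2}{p-1})}\gs c\,\varepsilon$. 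Solving for $T$ and computing $\tfrac12\bigl(\tfrac{2}{p-1}-D\bigr)^{-1}\cdot 2=\bigl(\tfrac{1}{p-1}-\tfrac{D}{2}\bigr)^{-1}$ produces exactly $T\ls C\varepsilon^{-(\frac1{p-1}-\frac{D}{2})^{-1}}$, which is (\ref{014}).

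The main obstacle is not the algebra but the admissibility bookkeeping: the optimal scale $R=\sqrt{T/2}$ must still exceed the threshold $R_0$ of (\ref{009}) (and $\gs1$, as Lemma~\ref{lem3.5} and (\ref{power-d}) require), so that the lower bound $c_0$ is genuinely available. This holds precisely in the regime where the conclusion carries content — if $T$ were so small that $\sqrt{T/2}<R_0$, the asserted estimate is trivially true after enlarging $C$ — but it should be recorded explicitly. A secondary point worth underlining is that it is the \emph{strict} inequality $p<1+\frac2D$, rather than the borderline $p=1+\frac2D$ of the Liouville theorem, that forces the exponent $D-\frac{2}{p-1}$ to be negative and thereby converts the optimization over $R$ into a finite quantitative bound on $T$ instead of a mere contradiction.
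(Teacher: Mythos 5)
Your proposal is correct and follows essentially the same route as the paper: substitute $(\phi_R)^\alpha$ into the weak formulation, absorb the nonlinear term via Young's inequality, lower-bound the initial-data term using (\ref{009}) for $R\gs R_0$, and then optimize over the admissible cut-off scale $R$. If anything, you are more explicit than the paper about the parabolic constraint $2R^2\ls T$ tying $R$ to the lifespan, which is precisely where the final exponent $-\bigl(\frac{1}{p-1}-\frac{D}{2}\bigr)^{-1}$ (rather than half of it) comes from.
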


    \begin{proof}
        By (\ref{010}) and Young's inequality, we have
        \begin{equation}
            \begin{aligned}
                \iint_{P_{\gamma R}}|u(t,x)|^p\phi_R^\alpha(t,x) \dmu(x)\dt &+\varepsilon\int_{B_{\gamma R}} u_0(x)\phi_R^\alpha(0,x)\dmu(x) \\
                &\ls CR^{-\frac{2}{p}+\frac{(p-1)D}{p}}\left(\iint_{P_{\gamma R}}|u(t,x)|^p\phi_R^\alpha\dmu(x)\dt\right)^\frac{1}{p}  \\
                &\ls \frac{1}{p'}\left(CR^{-\frac{2}{p}+\frac{(p-1)D}{p}}\right)^{p'} + \frac{1}{p}\iint_{P_{\gamma R}}|u(t,x)|^p\phi_R^\alpha\dmu(x)\dt.
            \end{aligned}
        \end{equation}
        By (\ref{009}), we can fix a constant $R_0\gs 1$ large enough such that $\int_{B_{R_0}}u_0(x)\dmu(x)>0$.
        We assume that $T(\varepsilon)\gs \gamma R_0$, since if $T(\varepsilon)<\gamma R_0$, 
        (\ref{014}) is trivially fulfilled. 
        Let 
        $$K:=\int_{B_{R_0}}u_0(x)\phi_R^\alpha(t,x)\dmu(x).$$ 
        Then for any $R\in[\gamma R_0,T(\varepsilon))$,
        $$\int_{B_R}u_0(x)\phi_R^\alpha(t,x)\dmu(x)\gs K.$$
        Then
        \begin{equation*}
            \begin{aligned}
                \varepsilon K &\ls (1-p)\iint_{P_{\gamma R}}|u(t,x)|^p\phi_R^\alpha(t,x) \dmu(x)\dt +\varepsilon\int_{B_{\gamma R}} u_0(x)\phi_R^\alpha(0,x)\dmu(x)  \\
                &\ls \frac{1}{p'}\left(CR^{-\frac{2}{p}+\frac{(p-1)D}{p}}\right)^{p'},
            \end{aligned}
        \end{equation*}
        from which we derive the estimate $R\ls C\varepsilon^{-(\frac{1}{p-1}-\frac{D}{2})^{-1}}$.
        The upper bound of $T(\varepsilon)$ follows immediately.
    \end{proof}

    For the hyperbolic inequality 
    \begin{equation} \label{016}
        \left\{
            \begin{aligned}
                &\partial_{tt} u-Lu \gs |u|^p, \quad x\in M,t>0,p>1,    \\
                &u(0,x)=u_0(x),   \\ 
                &\partial_t  u(0,x)=\sigma u_1(x),  \quad \sigma>0,    
            \end{aligned}
        \right.
    \end{equation}
    we can similarly define the lifespan time of a local in time solution by 
    $$T(\sigma):=\sup\{T>0\ |\ there\ exists \ a \ weak \ solution \ to \ (\ref{016}) \ in \ [0,T)\times M\}.$$
    With the same approach of Theorem \ref{thm5.1}, we get following upper bound of $T(\sigma)$.

    \begin{theorem}\label{thm5.2}
        Let $u(t,x):[0,T)\times M]\to R$ be a weak solution of the hyperbolic inequality (\ref{016}) with (\ref{013}).
   Assume that $1<p<\frac{D+1}{D-1}$. Then 
        $$T(\sigma)\ls C\sigma^{-(\frac{p+1}{p-1}-D)^{-1}}.$$
    \end{theorem}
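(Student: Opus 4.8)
The plan is to repeat the argument of Theorem~\ref{thm5.1} line by line, substituting the hyperbolic ingredients from the proof of Theorem~\ref{thm4.7} for the parabolic ones. Fixing $x_0$ and the radius $R_0\gs1$ supplied by (\ref{013}), I would use the space-time cut-off $\phi_R(t,x)=\tau(t/R)\varphi_R(x)$, with $\varphi_R$ from Lemma~\ref{lem3.5} and $\tau$ as in Theorem~\ref{thm4.5}, and test the weak formulation (\ref{012}) against $(\phi_R)^\alpha$ with $\alpha=\frac{2p}{p-1}$. The essential structural point is that the time scaling here is linear in $R$ (not quadratic as in the parabolic case), so that $\mathrm{supp}\,(\phi_R)^\alpha\subset P_{\gamma R}=[0,2R]\times B_{\gamma R}$ and $R\sim T$; this is the wave analogue of parabolic scaling. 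A useful preliminary observation is that, because $\tau\equiv1$ on $[0,1]$, one has $\tau'(0)=0$, hence $\partial_t\phi_R^\alpha(0,\cdot)=0$ and the term $\int_M u_0\,\partial_t\phi_R^\alpha(0,x)\dmu$ drops out of (\ref{012}); since $\partial_t u(0,\cdot)=\sigma u_1$, the only surviving datum is $\sigma\int_M u_1\,\phi_R^\alpha(0,x)\dmu$, which carries the small parameter.

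Next I would run the H\"older--Young estimate exactly as in the analogue of (\ref{010}) carried out in the proof of Theorem~\ref{thm4.7}. Using $|\partial_{tt}\phi_R^\alpha|+|L\phi_R^\alpha|\ls CR^{-2}\phi_R^{\alpha-2}$, the volume bound (\ref{equ4.4}) with $D'=D$ on the supports, and merging the two support regions into $P_{\gamma R}$, I obtain
\begin{equation*}
\iint_{P_{\gamma R}}|u|^p\phi_R^\alpha\dmu\dt+\sigma\int_{B_{\gamma R}}u_1\,\phi_R^\alpha(0,x)\dmu\ls CR^{-1-\frac1p+\frac{(p-1)D}{p}}\Big(\iint_{P_{\gamma R}}|u|^p\phi_R^\alpha\dmu\dt\Big)^{\frac1p}.
\end{equation*}
Applying Young's inequality to the right-hand side, absorbing the resulting $\frac1p\iint_{P_{\gamma R}}|u|^p\phi_R^\alpha$ into the left-hand side, and dropping the remaining nonnegative $|u|^p$-integral, I am left with
\begin{equation*}
\sigma\int_{B_{\gamma R}}u_1\,\phi_R^\alpha(0,x)\dmu\ls \frac1{p'}\big(CR^{-1-\frac1p+\frac{(p-1)D}{p}}\big)^{p'}=CR^{-(\frac{p+1}{p-1}-D)},
\end{equation*}
where the exponent uses $p'=\frac{p}{p-1}$.

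Finally I would invoke (\ref{013}) as Theorem~\ref{thm5.1} invokes (\ref{009}): since $\phi_R^\alpha(0,\cdot)\equiv1$ on $B_R$, for $R\gs\gamma R_0$ the left-hand side is bounded below by $\sigma K$ with $K>0$ fixed by (\ref{013}). The subcritical hypothesis $p<\frac{D+1}{D-1}$ is precisely equivalent to $\frac{p+1}{p-1}-D>0$, so the exponent of $R$ above is negative; rearranging gives $R\ls C\sigma^{-(\frac{p+1}{p-1}-D)^{-1}}$ for every admissible radius. Because a solution on $[0,T)$ admits all such cut-offs with $2R<T$, letting $R\uparrow T/2$ yields $T\ls C\sigma^{-(\frac{p+1}{p-1}-D)^{-1}}$, which is the asserted bound on $T(\sigma)$.

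I expect no conceptual difficulty, only careful bookkeeping. The three points requiring attention are: verifying the exponent arithmetic $\big(-1-\frac1p+\frac{(p-1)D}{p}\big)p'=-\big(\frac{p+1}{p-1}-D\big)$ and that Young's inequality leaves the retained $|u|^p$-term with a strictly positive coefficient $\frac1{p'}$; justifying the lower bound $\int_{B_{\gamma R}}u_1\,\phi_R^\alpha(0,\cdot)\dmu\gs K$ from the integral condition (\ref{013}) rather than from any pointwise sign of $u_1$ on the annulus $B_{\gamma R}\setminus B_R$; and correctly converting the radius bound into a lifespan bound via the linear time-to-radius relation $T\sim R$ dictated by the hyperbolic scaling, which is the one genuine difference from the parabolic Theorem~\ref{thm5.1}, where $T\sim R^2$.
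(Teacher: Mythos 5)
Your proposal is correct and follows exactly the route the paper takes: the paper's "proof" of Theorem~\ref{thm5.2} is simply the statement that one repeats the argument of Theorem~\ref{thm5.1} using the hyperbolic test-function estimate from Theorem~\ref{thm4.7}, which is precisely what you do, and your exponent arithmetic $\bigl(-1-\tfrac1p+\tfrac{(p-1)D}{p}\bigr)p'=-\bigl(\tfrac{p+1}{p-1}-D\bigr)$ is right. The one wrinkle you flag (the lower bound $\int_{B_{\gamma R}}u_1\,\phi_R^\alpha(0,\cdot)\dmu\gs K$ despite possible sign changes of $u_1$ on the annulus) is present in the paper's own Theorem~\ref{thm5.1} argument as well, so your treatment matches the paper's.
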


\begin{proof}[Proof of Theorem \ref{thm1.4}] 
The combination of Theorem \ref{thm5.1} and Theorem \ref{thm5.2} implies Theorem \ref{thm1.4}.
\end{proof}
\begin{remark}Similar as in Theorem \ref{thm1.3}, if we restrict to consider the $(2n+1)$-dimensional CR Sasakian manifold with nonnegative Tanaka-Webster Ricci curvature, then the constant $D$ in Theorem \ref{thm5.1} and Theorem \ref{thm5.2} can be improved to $D_n=2n+3.$
\end{remark}

\section{Upper bounds of lifespan time for general Carnot groups (Proof of Theorem \ref{thm1.7}.)} \label{sec6}
    In this section we specifically study the lifespan of the hyperbolic equation on Carnot group with arbitrary step. Let   $\G$ be a Carnot group of step $r$, and the dimension of the horizontal layer of $\G$ is $d$. Let $Q$ be the homogeneous dimension of $\G$, and let $\mu$ be the Haar measure on $\G$.  We denote $B_R^{\G}(x_0)$  the ball with radius $R>0$ with respect to the homogeneous distance $d_{\G}$. See Sect.\ref{CarnotGroups} for the details.

    \begin{definition} 
        A function $u\in L_{loc}^p([0,T)\times\G)$ is a weak solution of the problem
    \begin{equation} \label{problem}
        \left\{
            \begin{aligned}
                &\partial_{tt}u-\Delta_\G u \gs |u|^p, x\in \G,t>0,    \\
                &u(0,x)=u_0(x),   \\
                &\partial_tu(0,x)=\sigma u_1(x), \sigma>0,     
            \end{aligned}
        \right.
    \end{equation}
in $[0,T)\times\G$ if
        \begin{equation} \label{sseq}
            \begin{aligned}
                \int_0^T\int_{\G}|u(t,x)|^p\varphi(t,x)\dmu(x)\dt &+ \sigma\int_{\G}u_1(x)\varphi(0,x)\dmu(x)
                -\int_{\G}u_0(x)\varphi_t(0,x)\dmu(x) \\
                &\ls-\int_0^T\int_{\G}u(t,x)(\partial_{tt}+\Delta_\G)\varphi(t,x)\dmu(x)\dt
            \end{aligned}
        \end{equation} 
        for any $0\ls \varphi\in C_0^{\infty}([0,T),\times \G)$. If $T=\infty$, we call $u$ a global in time weak solution to (\ref{problem}), 
        otherwise we call $u$ a local in time weak solution. 
    \end{definition}

    As above, we prove a nonexistence result. 

    \begin{proposition} \label{nonexistence}
        Let $1<p\ls \frac{Q+1}{Q-1}$. Assume that
        \begin{equation} \label{u1}
            \liminf_{R\to\infty}\int_{B^{\G}_R(0)}u_1(x)\dmu(x)>0.
        \end{equation}
        Then there exists no global in time weak solution to (\ref{problem}).
    \end{proposition}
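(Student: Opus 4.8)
The plan is to run the test-function (capacity) argument of Theorem~\ref{thm4.7}, but with one essential modification. For a Carnot group of step $r\ge 3$ the generalized curvature--dimension inequality with transverse symmetries is not available, so the ``good'' cut-offs of Lemma~\ref{lem3.5} cannot be used. Instead I would build the spatial cut-offs directly from the dilations $\{\delta_\lambda\}$ and the homogeneous norm $|\cdot|_\G$, exploiting the degree-two homogeneity of $\Delta_\G$ and the exact volume identity $\mu(B_R^\G(x_0))=\omega_\G R^Q$.

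Fix $\varphi_1=\Phi(|\cdot|_\G^{2r!})\in C_0^\infty(\G)$ with $\Phi$ nonincreasing, $0\le\varphi_1\le 1$, $\varphi_1\equiv 1$ on $B_1^\G(0)$ and $\mathrm{supp}\,\varphi_1\subset B_2^\G(0)$; this is legitimate because $|\cdot|_\G^{2r!}$ is smooth. Setting $\varphi_R:=\varphi_1\circ\delta_{1/R}$ and using that each horizontal $X_k$ is $\delta_\lambda$-homogeneous of degree one, one gets $\Delta_\G\varphi_R=R^{-2}(\Delta_\G\varphi_1)\circ\delta_{1/R}$ and $\Gamma(\varphi_R)=R^{-2}\Gamma(\varphi_1)\circ\delta_{1/R}$, whence $\varphi_R\equiv 1$ on $B_R^\G$, $\mathrm{supp}\,\varphi_R\subset B_{2R}^\G$, $\|\Delta_\G\varphi_R\|_\infty+\|\Gamma(\varphi_R)\|_\infty\le C R^{-2}$, and $\Delta_\G\varphi_R$ is supported in the annulus $B_{2R}^\G\setminus B_R^\G$. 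With a temporal cut-off $\tau$ as in Theorem~\ref{thm4.7} and $\phi_R(t,x)=\tau(t/R)\varphi_R(x)$, I would insert $\varphi=(\phi_R)^\alpha$, $\alpha=\tfrac{2p}{p-1}\ge 2$, into (\ref{sseq}). Since $\tau'\equiv 0$ near $t=0$ the term $\int_\G u_0\,\partial_t\phi_R^\alpha(0,\cdot)\dmu$ vanishes, while (\ref{u1}), together with the radial monotonicity of $\varphi_R$ and the layer-cake formula, gives $\sigma\int_\G u_1\phi_R^\alpha(0,\cdot)\dmu\ge c\,\sigma>0$ for all large $R$.

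Applying H\"older's inequality exactly as in (\ref{010}), the identity $2p'=\alpha$ makes the $\phi_R$-weights in the dual factor cancel, so that
\[
\iint_{P_{2R}}|u|^p\phi_R^\alpha\dmu\dt + c\sigma
\le
C\,R^{-2+\frac{(Q+1)(p-1)}{p}}
\Big(\iint_{\widetilde P_{2R}\cup\widehat P_{2R}}|u|^p\phi_R^\alpha\dmu\dt\Big)^{\frac1p},
\]
where $\widetilde P_{2R}=[R,2R]\times B_{2R}^\G$ and $\widehat P_{2R}=[0,2R]\times(B_{2R}^\G\setminus B_R^\G)$ both have measure $\sim R^{Q+1}$ by the exact volume formula. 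The scaling exponent satisfies $-2+\frac{(Q+1)(p-1)}{p}\le 0$ precisely when $p\le\frac{Q+1}{Q-1}$, which is the hypothesis. Since $\widetilde P_{2R}\cup\widehat P_{2R}\subset P_{2R}$, the displayed inequality first forces $\iint_{P_{2R}}|u|^p\phi_R^\alpha\dmu\dt$ to be bounded uniformly in $R$, and hence, by Fatou, that $|u|^p\in L^1([0,\infty)\times\G)$.

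The main obstacle is the critical endpoint $p=\frac{Q+1}{Q-1}$, where the prefactor $R^{-2+(Q+1)(p-1)/p}=R^0$ provides no decay. Here I would argue as in the critical parabolic case of Theorem~\ref{thm4.5}: because $|u|^p$ is now globally integrable and the derivative-supports escape to infinity (the time slab $[R,2R]$ runs to $+\infty$ and the spatial annulus $B_{2R}^\G\setminus B_R^\G$ exits every compact set), dominated convergence forces $\iint_{\widetilde P_{2R}\cup\widehat P_{2R}}|u|^p\phi_R^\alpha\dmu\dt\to 0$ as $R\to\infty$. The right-hand side of the displayed inequality then tends to $0$ while its left-hand side stays $\ge c\sigma>0$, a contradiction; in the subcritical range $1<p<\frac{Q+1}{Q-1}$ the same contradiction is even more immediate, since $R^{-2+(Q+1)(p-1)/p}\to 0$ and the remaining integral stays bounded. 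This rules out any global in time weak solution of (\ref{problem}).
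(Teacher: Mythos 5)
Your proposal is correct and follows essentially the same route as the paper: the Mitidieri--Pohozaev test-function method with cut-offs built from the dilations $\delta_\lambda$ and the homogeneous norm (so that $\delta_\lambda$-homogeneity of the horizontal fields gives the $R^{-2}$ decay and $\mu(B_R^\G)=\omega_\G R^Q$ gives the volume factor), H\"older with exponents arranged so the weights cancel, and the escaping-support/integrability argument at the critical exponent $p=\tfrac{Q+1}{Q-1}$. The only cosmetic difference is that you use a product cut-off $\tau(t/R)\varphi_R(x)$ raised to the power $\alpha=\tfrac{2p}{p-1}$ (as in Theorem \ref{thm4.7}), whereas the paper uses a single space-time cut-off $g\bigl((t^{2r!}+|x|_\G^{2r!})/R^{2r!}\bigr)$ with a bump satisfying $|g'|+|g''|\ls Cg^{1/p}$; both yield the same key bound $|(\partial_{tt}+\Delta_\G)\varphi|\ls CR^{-2}\varphi^{1/p}$ on an annular region.
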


    We shall begin with constructing appropriate test functions.

    Let $g\in C_0^\infty([0,\infty)$ be a decreasing bump function such that $g=1$ on $[0,\frac{1}{2}]$, supp $g\subset[0,1)$ and $|g'|+|g''|\ls C$ for 
    some constant $C$. Since $g$ is compactly supported, additionally we have $|g'|+|g''|\ls Cg^{\frac{1}{p}}$
    for some constant $C$ (see \cite{GPCarnot}).
    
    Set
    $$s_R(t,x)=\frac{t^{2r!}+|x|_\G^{2r!}}{R^{2r!}} \quad \mathrm{and} \quad \varphi_R(t,x)=g(s_R(t,x)).$$
    Accords to $s_R(t,x)$, define
    $$D_R=\{(t,x)\in [0,T)\times\G)\ |\ t^{2r!}+|x|_\G^{2r!}<R^{2r!}\}.$$
    Then $\varphi_R$ is in $C_0^\infty([0,T)\times\G)$ and supp $\varphi\subset D_R\subset[0,R)\times B^{\G}_R(0)$.
    
    \begin{proof}[Proof of proposition \ref{nonexistence}]
        Assume that there exists a global in time weak solution to (\ref{problem}). We will prove by contradiction.

        Through a simple calculation, we get 
        \begin{equation}
            \begin{aligned} \label{e2}
                |\partial_{tt}\varphi_R(t,x)| &\ls \left(2r!\frac{t^{2r!-1}}{R^{2r!}}\right)^2\left|g''\circ s_R(t,x)\right| + \left(2r!(2r!-1)\frac{t^{2r!-2}}{R^{2r!}}\right)\left|g''\circ s_R(t,x)\right| \\
                &\ls C(r)R^{-2}(g\circ s_R(t,x))^\frac{1}{p} \\
                &=C(r)R^{-2}(\varphi_R(t,x))^\frac{1}{p}.
            \end{aligned}
        \end{equation}
        Remark that 
        $$s_R(t,x)=(R^{-1}t)^{2r!}+|\delta_{R^{-1}}(x)|_\G^{2r!}.$$
        Since, for $1\ls k\ls m$ and $\lambda>0$, $X_k$ is $\delta_\lambda$ homogeneous of degree 1, we have
        \begin{equation*}
            \begin{aligned}
                X_k^2(\varphi_R)(t,x)&=(X_k(s_R))^2(g''\circ s_R(t,x))+X_k^2(s_R)(g'\circ s_R(t,x)) \\ 
                &=R^{-2}\left(X_k(|\delta_{R^{-1}}(x)|_\G^{2r!})\right)^2(g''\circ s_R(t,x))+R^{-2}\left(X_k^2|\delta_{R^{-1}}(x)|_\G^{2r!}\right)(g'\circ s_R(t,x)).
            \end{aligned}
        \end{equation*}
        Note that on the compact support of $g\circ s_R$, we have $\delta_{R^{-1}}(x)\in D_1$, and
        $X_k(|\cdot|_\G^{2r!})$, $X_k^2|\cdot|_\G^{2r!}$ are smooth functions, so is bounded on $\overline{D_1}$.
        Thus we get 
        \begin{equation} \label{e3}
            |X_k^2(\varphi_R)(t,x)| \ls CR^{-2}(\varphi_R(t,x))^{\frac{1}{p}}.
        \end{equation}
        
        Combine (\ref{e2}) and (\ref{e3}), we obtain the estimate 
        $$|(\partial_{tt}+\Delta_\G)\varphi_R(t,x)| \ls CR^{-2}(\varphi_R(t,x))^{\frac{1}{p}}.$$

        Let $\varphi_R$ be the test function in (\ref{sseq}). Note that $\partial_t\varphi_R(0,x)=0$. 
        Since $\varphi_R$ is supported in $D_R$, and clearly $(\partial_{tt}+\Delta_\G)\varphi_R(t,x)$
        is supported in $D_R\setminus D_{\frac{R}{2}}$, we obtain
        \begin{equation} \label{mainineq}
            \begin{aligned}
                \iint_{D_R}&|u(t,x)|^p\varphi_R(t,x)\dmu(x)\dt + \sigma\int_{B_R}u_1(x)\varphi_R(0,x)\dmu(x)  \\
                &\ls \iint_{D_R\setminus D_{\frac{R}{2}}}|u(t,x)||(\partial_{tt}+\Delta_\G)\varphi_R(t,x)|\dmu(x)\dt \\
                &\ls \left(\iint_{D_R\setminus D_{\frac{R}{2}}}|u(t,x)|^p\varphi_R(t,x)\dmu(x)\dt\right)^{\frac{1}{p}} 
                \left(\iint_{D_R\setminus D_{\frac{R}{2}}}\frac{|(\partial_{tt}+\Delta_\G)\varphi_R(t,x)|^{p'}}{(\varphi_R(t,x))^{p'-1}}\dmu(x)\dt\right)^{\frac{1}{p'}}.
            \end{aligned}
        \end{equation}

        By $1<p\ls \frac{Q+1}{Q-1}$, the the last term 
        \begin{align*}
            \left(\iint_{D_R\setminus D_{\frac{R}{2}}}\frac{|(\partial_{tt}+\Delta_\G)\varphi_R(t,x)|^{p'}}{(\varphi_R(t,x))^{p'-1}}\dmu(x)\dt\right)^{\frac{1}{p'}} 
            &\ls CR^{-2}\int_0^R\mu (B_R(0))\dt \\
            &\ls CR^{-2+\frac{Q+1}{p'}} = CR^{Q-1-\frac{Q+1}{p}}.
        \end{align*}
        is bounded.

        Similar to previous proofs, $\int_0^R\int_{B^{\G}_R(0)}|u(t,x)|^p\varphi_R(t,x)\dmu(x)\dt$ is uniformly bounded as $R\to \infty$. By the 
        monotone convergence theorem, the right side of inequality (\ref{mainineq}) tends to zero as $R\to \infty$, which contradicts (\ref{u1}).
        The proof is completed.
    \end{proof}

    % \begin{remark}
    %     For Carnot groups of step 2 as in Section \ref{CarnotGroups}. We see that for $1\ls k\ls d$, $1\ls l \ls \mathfrak{h}$,
    %     $$X_k(\varphi_R)(t,x)=(X_k(s_R))(g'\circ s_R)(t,x)=R^{-1}\left(X_k(|\delta_{R^{-1}}(x)|_\G^{2r!})\right)(g'\circ s_R)(t,x)\ls CR^{-1},$$
    %     $$Z_l(\varphi_R)(t,x)=(X_k(s_R))(g'\circ s_R)(t,x)=R^{-2}\left(Z_l(|\delta_{R^{-1}}(x)|_\G^{2r!})\right)(g'\circ s_R)(t,x)\ls CR^{-2},$$
    %     where for vertical directions we have used that $Z_l$ is $\delta_\lambda$ homogeneous of degree 2.
    %     Then clearly for $R\gs 1$,
    %     $$|\Gamma(\varphi_R)+\Gamma^Z(\varphi_R)|_\infty\ls CR^{-2}.$$
    %     $\varphi_R$ satisfies the Hypothesis${\rm(H1')}$.
    % \end{remark}

    Now we are the place to prove the main result in this section. We will adapt the approach in \cite{Ikeda,GPCarnot}.

    \begin{theorem} \label{mthm}
        Let $1<p\ls \frac{Q+1}{Q-1}$ and $u_1$ be as in (\ref{u1}).
        Then there exists $\sigma_0>0$ such that for any $\sigma\in (0,\sigma_0]$, there holds
        \begin{equation}
            T(\sigma)\ls
            \begin{cases}
                C\sigma^{-({\frac{p+1}{p-1}-Q)}^{-1}} & if\ p\in(1,\frac{Q+1}{Q-1}), \\
                \exp(C\sigma^{-(p-1)}) & if\ p=\frac{Q+1}{Q-1}.
            \end{cases}
        \end{equation}
        Where $C$ is a positive constant independent of $\sigma$.
    \end{theorem}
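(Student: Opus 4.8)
The plan is to use a rescaling-in-$R$ argument that upgrades the qualitative nonexistence of Proposition~\ref{nonexistence} into a quantitative lifespan bound, following the test-function scheme of Theorems~\ref{thm5.1}--\ref{thm5.2} but tracking the dependence on $\sigma$ explicitly and treating the critical exponent $p=(Q+1)/(Q-1)$ separately. First I would take the same cut-off $\varphi_R$ and apply inequality (\ref{mainineq}), but before invoking H\"older I would split off the source term via Young's inequality $ab\ls \frac1p a^p+\frac1{p'}b^{p'}$, absorbing the $\iint|u|^p\varphi_R$ factor on the right into the left-hand side. This leaves the clean estimate
\begin{equation*}
\sigma\int_{B_R}u_1\,\varphi_R(0,x)\dmu(x)\ls C\Big(\iint_{D_R\setminus D_{R/2}}|u|^p\varphi_R\dmu\dt\Big)^{1/p}\cdot R^{Q-1-\frac{Q+1}{p}},
\end{equation*}
where I have used the bound $\|(\partial_{tt}+\Delta_\G)\varphi_R\|$-weighted integral $\ls CR^{Q-1-(Q+1)/p}$ already computed in the proof of Proposition~\ref{nonexistence}. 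By hypothesis (\ref{u1}) I can fix $R_0$ so that $\int_{B_R}u_1\dmu\gs K>0$ for all $R\gs R_0$, giving a lower bound $\sigma K\ls C\,I(R)^{1/p}R^{Q-1-(Q+1)/p}$, where $I(R):=\iint_{D_R\setminus D_{R/2}}|u|^p\varphi_R$.

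Next, for the subcritical case I would argue exactly as in Theorem~\ref{thm5.2}: since $I(R)\ls \iint_{D_R}|u|^p\varphi_R$ is uniformly bounded (this is the content of the nonexistence proof), the factor $I(R)^{1/p}$ is controlled, and $\sigma K\ls C R^{Q-1-(Q+1)/p}$ forces $R^{\frac{Q+1}{p}-(Q-1)}\ls C/\sigma$. The exponent rearranges as $\frac{Q+1}{p}-(Q-1)=(p-1)\big(\frac{p+1}{p-1}-Q\big)/p>0$ in the range $p<(Q+1)/(Q-1)$, so any admissible $R$ satisfies $R\ls C\sigma^{-(\frac{p+1}{p-1}-Q)^{-1}}$, and since the support of $\varphi_R$ reaches time $\sim R$ this bounds $T(\sigma)$ as claimed.

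The critical case $p=(Q+1)/(Q-1)$ is where the main obstacle lies, because there the scaling exponent $Q-1-\frac{Q+1}{p}$ vanishes and the crude bound degenerates into $\sigma K\ls C\,I(R)^{1/p}$ with no power of $R$ to exploit. To extract the logarithmic (exponential-in-$\sigma$) lifespan I would instead keep the localized quantity $I(R)$ in play rather than bounding it by a constant, and run a Gronwall/slicing argument: estimate $\frac{d}{dR}\!\int_0^{\log R}$-type energy along a dyadic or continuous family of radii. Concretely, setting $y(R)=\iint_{D_R}|u|^p\varphi_R$ one shows from the critical H\"older balance that $\sigma K\ls C\,(y(2R)-y(R))^{1/p}$ uniformly in $R$, so the increments $y(2R)-y(R)\gs c\sigma^{p'}$ are bounded below; summing over $\log_2(R/R_0)$ dyadic scales gives $y(R)\gs c\sigma^{p'}\log(R/R_0)$, while $y(R)$ must stay finite for a genuine solution. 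Comparing the two forces $\log T(\sigma)\ls C\sigma^{-p'}=C\sigma^{-(p-1)}$ after using $p'=(p-1)^{-1}\cdot p$ and absorbing constants, i.e. $T(\sigma)\ls\exp(C\sigma^{-(p-1)})$. The delicate point is making the lower bound on the dyadic increments uniform and summable; this is the step I expect to require the careful bookkeeping of the borderline Fujita/Kato exponent and is precisely where the method of \cite{Ikeda,GPCarnot} must be adapted to the homogeneous structure of $\G$.
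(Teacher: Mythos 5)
Your subcritical argument is essentially the paper's (Young's inequality to absorb the nonlinear term, as in Theorem~\ref{thm5.2}), but the displayed chain in your second paragraph does not deliver the stated exponent: if you bound $I(R)^{1/p}$ merely by a constant you get $R^{\frac{Q+1}{p}-(Q-1)}\ls C/\sigma$, hence $R\ls C\sigma^{-\frac{p}{p-1}(\frac{p+1}{p-1}-Q)^{-1}}$, which is weaker than claimed by a factor $p'=p/(p-1)$ in the exponent. The fix is exactly the Young step you announce at the outset but then abandon: absorbing $\frac1p\iint|u|^p\varphi_R$ into the left-hand side leaves $\sigma K\ls \frac{1}{p'}\bigl(CR^{Q-1-\frac{Q+1}{p}}\bigr)^{p'}$, and the exponent $\bigl(Q-1-\frac{Q+1}{p}\bigr)p'=-\bigl(\frac{p+1}{p-1}-Q\bigr)$ is the correct one. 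So the subcritical case is recoverable.

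The critical case is where the proposal has a genuine gap. Your dyadic scheme gives $y(2R)-y(R)\gs (\sigma K/C)^{p}$ (the power is $p$, not $p'$), and since $y$ stays uniformly bounded at criticality, summing over dyadic scales yields only $\log\bigl(T(\sigma)/R_0\bigr)\ls C\sigma^{-p}$, i.e.\ $T(\sigma)\ls\exp(C\sigma^{-p})$, strictly weaker than the stated $\exp(C\sigma^{-(p-1)})$. The identity you invoke, $\sigma^{-p'}=\sigma^{-(p-1)}$, is false. The sharp bound requires a self-improving (Kato-type) mechanism in which the lower bound on each increment grows with the mass already accumulated. The paper implements this by (i) Lemma~\ref{lem}, which converts the annular quantities into the monotone functional $G(R):=\frac{2r!}{\ln 2}\int_0^R Y(\rho)\frac{\drho}{\rho}+\sigma K\ls X(R)+\sigma K$ --- this also handles the fact that the cut-offs $\varphi_\rho$ at different scales do not nest, which your identification $Y(R)\approx y(2R)-y(R)$ glosses over --- and (ii) the differential inequality $\frac{\mathrm{d}}{\mathrm{d}R}\,G(R)^{1-p}\ls -CR^{-p(Q-1)+Q}$, integrated from $R_0$ to $T(\sigma)$. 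Because the initial value enters as $G(R_0)^{1-p}\ls(\sigma K)^{1-p}$, the critical case gives $\ln\bigl(T(\sigma)/R_0\bigr)\ls C\sigma^{-(p-1)}$ as claimed, and the subcritical exponent drops out of the same computation. Without this ODE step (or an equivalent iteration with improving increments) your argument cannot reach the exponent $p-1$.
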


    \begin{proof}[Proof of Theorem \ref{mthm}]
        Fix $R_0$ such that
        $$\int_{B^{\G}_{R_0}(0)}u_1(x)\varphi_{R_0}(0,x)\dmu(x) >0.$$
        Function g is decreasing, so for $R\gs R_0$, we have $\varphi_{R_0} \ls \varphi_R$. Let
        $$K:=\int_{B^{\G}_{R_0}(0)}u_1(x)\varphi_{R_0}(0,x)\dmu(x).$$
        Then for $R\in[R_0,T(\sigma))$, 
        $$\int_{B^{\G}_R(0)}u_1(x)\varphi_R(0,x)\dmu(x) \gs K.$$
        We wssume that $T(\sigma)>R_0$. Since if $T(\sigma)\ls R_0$, theorem \ref{mthm} is trivially fulfilled.

        From the proof of Propositon \ref{nonexistence}, we know that for any $R\in[R_0,T(\sigma))$, we have
        \begin{equation} 
            \begin{aligned} \label{e4}
                \iint_{D_R}&|u(t,x)|^p\varphi_R(t,x)\dmu(x)\dt+\sigma K\\
                 &\ls
                \iint_{D_R}|u(t,x)|^p\varphi_R(t,x)\dmu(x)\dt + \sigma\int_{B^{\G}_R(0)}u_1(x)\varphi_R(0,x)\dmu(x) \\
                &\ls CR^{Q-1-\frac{Q+1}{p}}\left(\iint_{D_R \setminus D_{\frac{R}{2}}}|u(t,x)|^p\varphi_R(t,x)\dmu(x)\dt\right)^{\frac{1}{p}}.
            \end{aligned}
        \end{equation}

        Define
        \begin{equation*}
            g^*(s):=\begin{cases}
                0, & if \ s\in[0,\frac{1}{2}),  \\
                g(s), & if \ s\in[\frac{1}{2}, \infty],
            \end{cases}
        \end{equation*}
        and
        $$\varphi_R^*(x,t):=g^*(s_R(x,t)).$$
        Then $\varphi_R^*\in L^\infty([0,T)\times\G)$. And (\ref{e4}) can be written as
        \begin{equation} \label{e5}
            \iint_{D_R}|u(t,x)|^p\varphi_R(t,x)\dmu(x)\dt+\sigma K 
            \ls CR^{Q-1-\frac{Q+1}{p}}\left(\iint_{D_R}|u(t,x)|^p\varphi_R^*(t,x)\dmu(x)\dt\right)^{\frac{1}{p}}.
        \end{equation}

        Next we introduce a lemma analogous to (\cite{GPCarnot}).
        \begin{lemma} \label{lem}
            Let $f=f(s)$ be a measurable function such that $f(s)=0$ for $s\in [0,\frac{1}{2}]\cup [0,\infty)$ and $f(s)$ is a decreasing
            function for $s>1$. Then for any $R>0$, $A>0$ and $h>0$, we have 
            \begin{equation} \label{lemeq}
                \int_0^Rf\left(\frac{A}{\rho^h}\right)\ \frac{\drho}{\rho} \ls \frac{\ln 2}{h}f\left(\frac{A}{R^h}\right).
            \end{equation}
        \end{lemma}
        \begin{proof}
            If $\frac{A}{R^h}>1$, (\ref{lemeq}) is trivial since the left side of (\ref{lemeq}) is identically 0.
            Else for $\frac{A}{R^h} \ls 1$, 
            \begin{equation*}
                \begin{aligned}
                    \int_0^Rf\left(\frac{A}{\rho^h}\right)\ \frac{\drho}{\rho}
                    &= \int_{[0,R]\cap[A^{\frac{1}{h}},(2A)^{\frac{1}{h}}]}f\left(\frac{A}{\rho^h}\right)\ \frac{\drho}{\rho} \\
                    &\ls f\left(\frac{A}{R^h}\right)\int_{A^{\frac{1}{h}}}^{(2A)^{\frac{1}{h}}}\ \frac{\drho}{\rho}
                    =\frac{\ln2}{h}f\left(\frac{A}{R^h}\right).
                \end{aligned}
            \end{equation*}
        \end{proof}

        Let us continue the proof of the Theorem \ref{mthm}.

        Denote
        $$X(\rho):=\iint_{D_\rho}|u(t,x)|^p\varphi_\rho(t,x)\dmu(x)\dt,$$
        $$Y(\rho):=\iint_{D_\rho}|u(t,x)|^p\varphi_\rho^*(t,x)\dmu(x)\dt,$$
        By setting $f=g^*$, $A=s_1(t,x)$ and $h=2r!$ in Lemma (\ref{lem}), we get
        $$\frac{2r!}{\ln 2}\int_0^RY(\rho)\ \frac{\drho}{\rho} \ls X(R),$$
        Combine with (\ref{e5}), we have
        \begin{equation}
            \frac{2r!}{\ln 2}\int_0^RY(\rho)\ \frac{\drho}{\rho}+\sigma K \ls X(R)+\sigma K \ls CR^{Q-1-\frac{Q+1}{p}}(Y(R))^{\frac{1}{p}}.
        \end{equation}
        Then 
        \begin{equation} \label{e6}
            \begin{aligned}
                \frac{\mathrm{d}}{\mathrm{d}R}\left(\frac{2r!}{\ln 2}\int_0^RY(\rho)\ \frac{\drho}{\rho}+\sigma K\right)^{1-p}
                &=(1-p)^{-1}\frac{2r!}{\ln 2}\left(\frac{2r!}{\ln 2}\int_0^RY(\rho)\ \frac{\drho}{\rho}+\sigma K\right)^{-p}Y(R)R^{-1}\\
                &\ls -CR^{-p(Q-1)+Q}.
            \end{aligned}
        \end{equation}
        Integrate (\ref{e6}) from $R_0$ to $T(\sigma)$. We can choose a constant $\sigma_0$ small enough such that for every $\sigma\in (0,\sigma_0]$, we have
        \begin{equation}
            -C\left.\left(\frac{2r!}{\ln 2}\int_0^RY(\rho)\ \frac{\drho}{\rho}+\sigma K\right)^{1-p}\right|_{R_0}^{T(\sigma)} \gs
            \begin{cases}
                \ln \left(\frac{T(\sigma)}{R_0}\right),\quad &if\ p=\frac{Q+1}{Q-1}, \\
                (T(\sigma))^{-p(Q-1)+Q+1},\quad &if\ 1<p<\frac{Q+1}{Q-1}.
            \end{cases}
        \end{equation}
        By simple calculation we see
        $$-\left.\left(\frac{2r!}{\ln 2}\int_0^RY(\rho)\ \frac{\drho}{\rho}+\sigma K\right)^{1-p}\right|_{R_0}^{T(\sigma)}\ls -C\sigma^{-(p-1)}.$$
        Finally we get
        \begin{equation}
            T(\sigma)\ls
            \begin{cases}
                C\sigma^{-({\frac{p+1}{p-1}-Q)}^{-1}} & if\ p\in(1,\frac{Q+1}{Q-1}), \\
                \exp(C\sigma^{-(p-1)}) & if\ p=\frac{Q+1}{Q-1},
            \end{cases}
        \end{equation}
        which is exactly the bound we state in Theorem \ref{mthm}.
    \end{proof}
        
  At last, we complete the proof of Theorem \ref{thm1.7} as follows.
  \begin{proof}[Proof of Theorem \ref{thm1.7}] It comes from the combination of Theoerm \ref{mthm} and the fact the homogeneous distance $d_{\G}$ is equivalent to the Carnot-Carath\'eodory distance.
  \end{proof} 
  
\bibliographystyle{amsplain}

\providecommand{\href}[2]{#2}

\end{document}